\newcommand{\id}{{\rm id}}
\newcommand{\Frac}{{\rm Frac}}
\newcommand{\lcd}{{\rm lcd}}
\newcommand{\norm}{{\sf norm}}
\newcommand{\join}{\vee}
\newcommand{\meet}{\wedge}
\renewcommand{\k}{\Bbbk}
\newcommand{\N}{{\mathbb N}}
\newcommand{\Z}{{\mathbb Z}}
\newcommand{\QQ}{{\mathbb Q}}
\newcommand{\RR}{{\mathbb R}}
\newcommand{\A}{{\mathcal A}}
\newcommand{\B}{{\mathcal B}}
\newcommand{\GG}{{\Gamma}}
\newcommand{\D}{{\mathcal D}}
\renewcommand{\O}{{\mathcal O}}
\newcommand{\F}{{\mathcal F}}
\newcommand{\I}{{\mathcal I}}
\newcommand{\R}{{\mathcal R}}
\renewcommand{\c}{{\mathbf c}}
\newcommand{\q}{{\mathbf q}}
\newcommand{\g}{{\mathfrak g}}
\newcommand{\m}{{\mathfrak m}}
\newcommand{\n}{{\mathfrak n}}
\renewcommand{\a}{{\mathfrak a}}
\newcommand{\Mod}{{\sf Mod}}
\newcommand{\GrMod}{{\sf Gr}}
\newcommand{\grmod}{{\sf gr}}
\newcommand{\Hom}{{\sf Hom}}
\newcommand{\HOM}{\underline{{\sf Hom}}}
\newcommand{\Ext}{{\sf Ext}}
\newcommand{\EXT}{\underline{\sf Ext}}
\newcommand{\opp}{{\rm opp}}
\newcommand{\tot}{{\rm tot}}
\newcommand{\supp}{{\sf supp}}
\newcommand{\pd}{{\sf projdim}}
\newcommand{\injdim}{\mathsf{injdim}}
\newcommand{\irr}{\underline{{\rm irr}}}
\newcommand{\FrMon}{{\sf fr}}
\newcommand{\StMon}{{\sf str}}
\newcommand{\inc}{\underline{{\rm inc}}}
\newcommand{\colim}{\displaystyle\lim_{\longrightarrow}}
\newcommand{\tq}{\,|\,}
\DeclareMathOperator{\Res}{Res}
\def\edge{\ar@{-}}
\def\dedge{\ar@{.}}
\newtheorem{suptheorem}{Theorem}
\newtheorem{supalinea}[suptheorem]{\S}
\newtheorem{subtheorem}{Theorem}[subsection]
\newtheorem{subproposition}[subtheorem]{Proposition}
\newtheorem{subdefinition}[subtheorem]{Definition}
\newtheorem{sublemma}[subtheorem]{Lemma}
\newtheorem{subexample}[subtheorem]{Example}
\newtheorem{subcorollary}[subtheorem]{Corollary}
\newtheorem{subremark}[subtheorem]{Remark}
\newcommand{\titre}{Twisted semigroup algebras.}
\newcommand{\fichier}{lrpz2-arg.tex}
\begin{document}

\title{{\vspace{-1.5cm} \bf \titre}}
\author{L. RIGAL, P. ZADUNAISKY\footnote{This work has been partially supported by the
projects ECOS-MINCyT action No. A10E04, MATHAmSud-NOCOSETA, UBACYT
20020100100475, PIP-CONICET 112-200801-00487 and PICT-2007-02182. The second author is a
CONICET fellow.}}
\date{}
\maketitle

\begin{abstract}
We study 2-cocycle twists, or equivalently Zhang twists, of semigroup algebras over a
field $\k$. If the underlying semigroup is affine, that is abelian, cancellative and
finitely generated, then $\mathsf{Spec}~\k[S]$ is an affine toric variety over $\k$, and
we refer to the twists of $\k[S]$ as \emph{quantum affine toric varieties}. We show that
every quantum affine toric varieties has a ``dense quantum torus'', in the sense that it
has a localization isomorphic to a quantum torus. We study quantum affine toric varieties
and show that many geometric regularity properties survive the deformation process.

\tableofcontents
\end{abstract}

\section*{Introduction.}

Fix a field $\k$. Let $S$ be a commutative semigroup with identity. Classically, one
associates to $S$ its semigroup algebra $\k[S]$ defined as follows: it is the
$\k$-vector space with basis $\{X^s \mid s\in S\}$ indexed by the elements of $S$,
equipped with the unique associative product such that $X^s \cdot X^{s'} = X^{s+s'}$ for
all $s,s' \in S$; it is $S$-graded in a natural way. In the present paper we are
interested in noncommutative deformations of $\k[S]$. Namely, we consider integral
algebras which respect the $S$-graded $\k$-vector space structure of $\k[S]$ but
with a possibly different associative product. It is not difficult to see that such a
deformation of $\k[S]$ is obtained twisting the original commutative multiplication by
means of a $2$-cocycle on $S$ with values in $\k^\times$. The deformation of $\k[S]$
corresponding to the $2$-cocycle $\alpha :S \times S \longrightarrow \k$ will be denoted
$\k^\alpha[S]$; its product, denoted $\cdot_\alpha$, satisfies $X^s \cdot_\alpha X^{s'} =
\alpha(s,s') X^{s+s'}$ for all $s,s' \in S$.

We will focus on the case where $S$ is an \emph{affine} semigroup, that is, $S$ is
finitely generated and isomorphic as semigroup to a subsemigroup of $\Z^n$ for some
positive integer $n$. In this case $\k[S]$ is a finitely generated $\k$-algebra and an
integral domain. Its maximal spectrum is an affine \emph{toric variety}, and if $\k$ is
algebraically closed then all affine toric varieties arise this way. With this fact in
mind we adopt the point of view of noncommutative algebraic geometry and consider the
$2$-cocycle twists of semigroup algebras as noncommutative analogues of affine toric
varieties, and refer to them as \emph{quantum affine toric varieties}. These objects 
were also studied in the widely circulated preprint [Ing], and some of our results are
similar to those found there; however, we work on a more general setting and our methods
are quite different.

Toric varieties have played a very important role in algebraic geometry in recent years.
From our point of view the more relevant result is the following: over the complex
numbers, any Schubert variety in a flag variety degenerates into a toric variety. This
result was proved by P. Caldero in [Cal]; the reader is referred to this article for a more
precise statement, related results and historical background. In an upcoming article we will
show that a similar result holds in the quantum context, namely that quantum Schubert
varieties degenerate to quantum affine toric varieties. In order to use this result to
study the properties of quantum Schubert varieties, we need to establish first the
properties of quantum affine toric varieties, which is the main objective of this article.
This program is carried out in full detail in the thesis [Zad].

Let $G$ be a simply connected, semisimple complex algebraic group and $\g$ its Lie
algebra. Lakshmibai and Reshetikin [LR] and Soibelmann [S] have defined quantum flag
varieties associated to $G$ as well as corresponding quantum Schubert subvarieties. This
is done again in the spirit of noncommutative geometry, that is to say, such a
\emph{quantum variety} is actually defined by means of a noncommutative algebra,
considered as its homogeneous coordinate ring. In the above setting, the quantum flag
varieties are defined as certain subalgebras of the Hopf dual of the quantum enveloping
algebra $U_q(\g)$, while the associated quantum Schubert varieties are quotients of the
latter obtained by means of certain quantum Demazure modules. For details on the general
construction, the reader may consult chapter 6 of [Zad].

As stated above, it can be shown that quantum Schubert varieties degenerate into
quantum toric varieties; by this we mean that the noncommutative $\k$-algebra which
defines a quantum Schubert variety may be equipped with a filtration whose
associated graded ring is isomorphic to a $\k$-algebra of the form $\k^\alpha[S]$. In [RZ]
we proved that this result holds for the more general class of quantum Richardson
varieties when the quantum flag variety is a quantum Grassmanians of type A.

The interest of the above result is that it allows to establish a number of properties for
quantum Schubert varieties by first proving them for quantum toric varieties and then
showing that the considered properties lift from the associated graded algebra to the
original one. \\

A guiding principle in noncommutative algebraic geometry is that if a geometric property
can be formulated in homological terms, then it should be stable by quantization, meaning
that if this property holds for a given coordinate ring it should also hold for
its quantum analogues. The properties that we have in mind are being Cohen-Macaulay,
Gorenstein or regular. Recall that these properties have been extended from the class of
commutative $\k$-algebras to the class of $\N$-graded not necessarily commutative algebras
by Artin, J\o rgensen, Van den Bergh, Yekutieli, Zhang and others. It turns out that the
guiding principle mentioned at the beginning of this paragraph can be given a very
concrete meaning for quantum affine toric varieties, and we finish this
introduction by discussing it in some detail since this is the main organizing principle
of the article.

Fix a subsemigroup $S \subseteq \N^{n+1}$ for some $n \geq 0$. Any $2$-cocycle deformation
$\k^\alpha[S]$ of its semigroup algebra has a natural $\Z^{n+1}$-grading, and since we wish
to study this noncommutative algebra in the context of the previous paragraph, we consider
also the $\N$-grading obtained by taking the total degree. This second grading is induced
by the group morphism $\phi: \Z^{n+1} \to \Z$ given by $\phi(a_0, \ldots, a_n) = a_0 +
\ldots + a_n$, in the sense of subsection \ref{ss-change-grading-group}. The regularity
properties of $\k^\alpha[S]$ are read from the category $\Z\GrMod \k^\alpha[S]$ of
$\Z$-graded $\k^\alpha[S]$-modules, and we expect them to be the same as those of $\k[S]$.
That this is indeed the case can be shown by mimicking the theory developed for the study
of the algebra  $\k[S]$, for example in [BH; chapter 6]. However, the relation between
both algebras can be made more precise: by a theorem of J. Zhang, the categories of
$\Z^{n+1}$-graded modules $\Z^{n+1}\GrMod~\k[S]$ and $\Z^{n+1}\GrMod~\k^\alpha[S]$ are
isomorphic; this
isomorphism is remarkably explicit and concrete, so information transfers between these
two categories in a straightforward way. We are then left to study the relation between
the categories of $\Z^{n+1}$ and $\Z$-graded $\k^\alpha[S]$-modules, and it turns out that
this is controlled by three functors, induced by the morphism $\phi$, with very good
homological properties. These functors fit into a diagram as follows
\[
\xymatrix@!C{
	\Z^{n+1}\GrMod\k[S] \ar@{<->}[r]^{\cong}  \ar@{->}[d]<-3ex>_{\phi_!}
	\ar@{<-}[d]|{\phi^*} \ar@{->}[d]<3ex>^{\phi_*}
	&\Z^{n+ 1}\GrMod \k^\alpha[S] \ar@{->}[d]<-3ex>_{\phi_!} \ar@{<-}[d]|{\phi^*}
	\ar@{->}[d]<3ex>^{\phi_*} \\
	\Z\GrMod \k[S]  
	& \Z\GrMod\k^\alpha[S]  \\
} 
\]
We use the vertical functors repeatedly to deduce information at the $\Z$-graded level
from the $\Z^{n+1}$-graded level, where the connection between the quantum and the
classical objects is very explicit. In this way we effectively deduce the regularity
properties of $\k^\alpha[S]$ from those of $\k[S]$.\\

The paper is organized as follows. 

Section \ref{section-prelim} establishes, in a fairly general setting, the fundamental
properties that we need on gradings and twistings. In the first subsection, generalities
are recalled, including local cohomology for noncommutative graded algebras. In the
second, the notion of a Zhang twist is introduced following [Z] and further properties
concerning the behavior of classical homological invariants with respect to such twists
are established. This notion, more general than twistings by $2$-cocycles, turns out to be
the proper one for our purposes. The last subsection deals with change of gradings over
the algebra. We consider a $G$-graded algebra $A$ and a group morphism $\phi : G
\longrightarrow H$. This morphism induces an $H$-grading over $A$, and an adjoint triple
$(\phi_!, \phi^*, \phi_*)$ relating the categories of $G$ and $H$-graded modules. These
functors are the main tools we use to transfer homological information between both
categories. \\

In section \ref{section-regularity-grading-twist}, we focus on the case where $G=\Z^{r+1},
H=\Z$ and $\phi : \Z^{r+1} \longrightarrow \Z$ sends an $r+1$-tuple to the sum of its
entries. In this context, the algebra $A$ is $\Z$-graded and the usual notions of
regularity from noncommutative algebraic geometry make sense. In the first subsection, we
study how the properties of being Cohen-Macaulay, Gorenstein or regular, which a priori
concern the category $\Z\GrMod A$, can actually be read in $\Z^n\GrMod A$. We also study
the behavior of the same regularity conditions with respect to twistings. The main result
of this section is Theorem \ref{stabilite-par-twist} which, roughly speaking, asserts that
a regularity property is true for $A$ if and only if it is true for any twist of $A$. In
the second subsection, similar questions are treated at the level of the derived
categories of modules.\\

In Section \ref{tsa} we study the class of algebras that were our original motivation:
quantum affine toric varieties. The first subsection collects basic
facts on twisting of semigroup algebras by $2$-cocycles. The second subsection restricts
the point of view to the case where the semigroup is affine. It is shown  that for such a
semigroup $S$ and for any $2$-cocycle $\alpha$, the algebra $\k^\alpha[S]$ is indeed a
Zhang twist of $\k[S]$, in particular Theorem \ref{stabilite-par-twist} applies. In the
same subsection we establish a decomposition statement which asserts that a quantum
affine toric variety whose underlying semigroup is normal is the intersection of a certain
family of subalgebras of its division ring of fractions, each isomorphic to a quantum
space localized at some of its generators, see Proposition \ref{decomposition}. As a
consequence we get a characterization by means of the underlying semigroup of those
quantum toric varieties which are normal domains, i.e. maximal orders in their division
ring of fractions, see Corollary \ref{corollaire-OM}. In the last subsection our
attention restricts to the case of \emph{twisted lattice algebras}. These are examples of
quantum affine toric varieties where the underlying semigroup is built from a certain
finite distributive lattice. These algebras arise naturally as degenerations of quantum
analogues of Richardson varieties in the quantum grassmannian and more generally of
symmetric quantum graded algebras with a straightening law, which were the object of study
of [RZ].

We would like to thank Andrea Solotar for her help and suggestions for the organization of
this article.

\paragraph{Conventions and notation.} Throughout, $\k$ denotes a field, and $G$ and $H$ are
commutative groups. We use additive notation.

\section{Preliminaries on gradings and twistings.} \label{section-prelim}

\subsection{Basic results.} \label{ss-br} 
Let $A$ be a $\k$-algebra. A $G$-grading on $A$ is a direct sum decomposition of $A$ as a
$\k$-vector space $A=\bigoplus_{g\in G} A_g$, such that $A_g A_{g'} \subseteq A_{g+g'}$
for all $g, g' \in G$.  We then say that $A$ is a $G$-graded algebra.

In this context, our attention will focus on the category of $G$-graded left $A$-modules,
which is the subject of study of [NV; Chap. A]. A $G$-graded left $A$-module is a left
$A$-module $M$ together with a direct sum decomposition $M = \bigoplus_{g\in G} M_g$ as
$\k$-vector space such that $A_g M_{g'} \subseteq M_{g+g'}$ for all $g, g' \in G$. The
spaces $M_g$ are called the homogeneous components of $M$; the support of $M$ is the set
$\supp M = \{g \mid M_g \neq 0\}$.

Fix $g\in G$. A morphism $f : M \longrightarrow N$ of $A$-modules is said to be
homogeneous of degree $g$ if $f(M_{g'}) \subseteq M_{g'+g}$ for all $g' \in G$. 
We denote by $G\GrMod A$ the category whose objects are $G$-graded $A$-modules and whose
morphisms are homogeneous $A$-module morphisms of degree $0$. It is easily verified that
$G\GrMod A$ is an abelian category with arbitrary products and coproducts, see [NV; \S
I.1]. The $g$-suspension functor $\Sigma_g: G\GrMod A \to G\GrMod A$ sends a graded module
$M$ to $M[g]$, defined as the graded module with the same underlying module structure as
$M$ and grading given by $M[g]_{g'} = M_{g'+g}$, while leaving morphisms unchanged. This
is an autoequivalence of the category of $G$-graded modules, in particular it is exact and
preserves projective and injective objects.

General homological methods apply to show that the category $G\GrMod A$ has enough
injective and projective objects. The projective and injective dimensions of a graded
module $M$ will be denoted by $G\pd M$ and $G\injdim M$ respectively.\\

Given objects $M, N$ of $G\GrMod A$ and $g\in G$, an $A$-module morphism $f: M
\longrightarrow N$ is homogeneous of degree $g$ if and only if it belongs to
$\Hom_{G\GrMod A}(M,N[g])$. With this in mind we set
\[
\HOM_{G\GrMod A}(M,N) = \bigoplus_{g\in G} \Hom_{G\GrMod A}(M,N[g]) \subseteq \Hom_A(M,N),
\]
which makes $\HOM_{G\GrMod A}(M,N)$ into a $G$-graded $\k$-vector space. This
inclusion is strict in general but it is an equality if $M$ is finitely generated, as the
following lemma states. For a proof see [NV; Corollary I.2.11].

\begin{sublemma} \label{HOM-Hom} -- 
Let $A$ be a $G$-graded $\k$-algebra and $M,N$ objects of $G\GrMod A$. If $M$ is a
finitely generated $A$-module, then $\HOM_{G\GrMod A}(M,N) = \Hom_A(M,N)$.
\end{sublemma}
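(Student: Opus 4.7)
The inclusion $\HOM_{G\GrMod A}(M,N) \subseteq \Hom_A(M,N)$ is immediate from the definitions, so the plan is to prove the reverse inclusion under the assumption that $M$ is finitely generated. Given $f \in \Hom_A(M,N)$, I would construct for each $g \in G$ a candidate homogeneous component $f_g \colon M \to N$ of degree $g$, and then show that all but finitely many of them vanish and that $f = \sum_{g \in G} f_g$ in $\Hom_A(M,N)$.

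To define $f_g$, I would use that any $m \in M$ decomposes uniquely as a finite sum $m = \sum_{g'} m_{g'}$ of homogeneous components $m_{g'} \in M_{g'}$. Since $N$ is also $G$-graded, I can further decompose each $f(m_{g'})$ as a sum of homogeneous elements $f(m_{g'})_h \in N_h$. I would then set $f_g(m) = \sum_{g'} f(m_{g'})_{g'+g}$, which by construction is $\k$-linear and sends $M_{g'}$ into $N_{g'+g}$. The key compatibility to check is that $f_g$ is $A$-linear; for this it suffices to verify $f_g(a \cdot m) = a \cdot f_g(m)$ for homogeneous $a \in A_h$ and $m \in M_{g'}$, which reduces to the fact that $f$ is $A$-linear together with the observation that multiplication by $a \in A_h$ shifts homogeneous components of $N$ by $h$, so the degree-$(g'+g+h)$ component of $f(am) = a f(m)$ equals $a$ times the degree-$(g'+g)$ component of $f(m)$.

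Here the finite generation hypothesis enters decisively. Fix generators $m_1, \ldots, m_k$ of $M$ as an $A$-module, and write each $m_i = \sum_{g' \in F_i} (m_i)_{g'}$ with $F_i \subseteq G$ finite. Each $f((m_i)_{g'})$ has finite support in $N$, so there is a finite set $E \subseteq G$ such that $f_g(m_i) = 0$ for all $i$ whenever $g \notin E$. Since the $m_i$ generate $M$ and each $f_g$ is $A$-linear, this forces $f_g = 0$ for $g \notin E$. The sum $\sum_{g \in E} f_g$ is therefore a well-defined $A$-linear map, and evaluating both sides on each $(m_i)_{g'}$ and using the uniqueness of the homogeneous decomposition of $f((m_i)_{g'})$ in $N$ shows $f = \sum_{g \in E} f_g$, exhibiting $f$ as an element of $\HOM_{G\GrMod A}(M,N)$.

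The main obstacle is purely notational: keeping track of the two independent gradings (the degree $g$ of the component $f_g$ and the internal degree $g'$ of elements of $M$) while verifying $A$-linearity of each $f_g$. The finite generation hypothesis is used only to guarantee that the collection $\{f_g\}_{g \in G}$ has finite support, which is what makes $\sum f_g$ a legitimate morphism rather than a formal infinite sum.
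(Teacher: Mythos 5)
Your argument is correct and is the standard proof of this fact; the paper itself does not reprove the lemma but simply cites [NV; Corollary I.2.11], where essentially the same argument appears. Two small remarks. First, the identity $f(m) = \sum_{g} f_g(m)$ holds for \emph{every} $m\in M$ by construction (for fixed $m$ it is just the homogeneous decomposition of $f(m)$ regrouped by the shift $g$), so finite generation is not needed to prove the decomposition $f=\sum_g f_g$ pointwise — it is needed precisely, as you say at the end, to guarantee that only finitely many $f_g$ are nonzero, so that $\sum_g f_g$ is an element of the direct sum defining $\HOM_{G\GrMod A}(M,N)$. Second, in the last step you evaluate the two $A$-linear maps $f$ and $\sum_{g\in E} f_g$ on the homogeneous pieces $(m_i)_{g'}$ and conclude they are equal; for this one should note that the finitely many homogeneous components $(m_i)_{g'}$ also generate $M$ as an $A$-module (since $m_i=\sum_{g'}(m_i)_{g'}$ and each $(m_i)_{g'}\in M$). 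You use this implicitly, and it is worth making explicit since it is also the standard reduction showing that a finitely generated graded module admits a finite homogeneous generating set.
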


For $i\in\N$, we denote the right derived functors of $\HOM_{G\GrMod A}(-,-)$ by
$\EXT^i_{G\GrMod A}(-,-)$. Lemma \ref{HOM-Hom} extends to the following result,
see [NV; Corollary I.2.12].  

\begin{sublemma} \label{EXT-Ext} -- 
Let $A$ be a left noetherian $G$-graded $\k$-algebra and $M,N$ objects of $G\GrMod A$. If
$M$ is a finitely generated $A$-module, then there exists a $\k$-vector space isomorphism
$\EXT^i_{G\GrMod A}(M,N) \cong \Ext^i_A(M,N)$ for all $i \in \N$.
\end{sublemma}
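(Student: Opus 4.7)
The plan is to compute both Ext-groups using a single well-chosen resolution of $M$ and then identify the resulting complexes via Lemma \ref{HOM-Hom} applied term by term.

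First I would build a resolution $P_\bullet \twoheadrightarrow M$ of $M$ in $G\GrMod A$ by \emph{finitely generated} graded projective modules. This is done inductively: since $M$ is finitely generated and $G$-graded, choose finitely many homogeneous generators $m_1,\dots,m_r$ with $m_j \in M_{g_j}$ and form the graded free cover $P_0 = \bigoplus_j A[-g_j] \twoheadrightarrow M$. The kernel is a graded submodule of the finitely generated module $P_0$, hence finitely generated by the left noetherian hypothesis on $A$, and one iterates. Each $P_i$ is a finite direct sum of suspensions of $A$, in particular it is projective both in $G\GrMod A$ and, after forgetting the grading, in $\Mod A$ (a standard fact, e.g. [NV]).

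Next I would compute the two Ext-groups from this single resolution. On the one hand, $\EXT^i_{G\GrMod A}(M,N)$ is by definition the $i$-th cohomology of the complex $\HOM_{G\GrMod A}(P_\bullet, N)$; on the other hand, since the $P_i$ are also projective in $\Mod A$, the group $\Ext^i_A(M,N)$ is the $i$-th cohomology of $\Hom_A(P_\bullet, N)$. Applying Lemma \ref{HOM-Hom} to each finitely generated $P_i$ yields an equality of $\k$-vector spaces
\[
\HOM_{G\GrMod A}(P_i, N) = \Hom_A(P_i, N)
\]
for every $i$, and these identifications are compatible with the differentials, which are induced by the $A$-linear maps $P_{i+1} \to P_i$ of the resolution. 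Hence the two complexes coincide, and passing to cohomology gives the desired isomorphism $\EXT^i_{G\GrMod A}(M,N) \cong \Ext^i_A(M,N)$.

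The only delicate point is ensuring that the resolution one uses to compute $\EXT^i$ in the graded category can simultaneously be used to compute the ungraded $\Ext^i$; this is exactly what the noetherian hypothesis guarantees by keeping each $P_i$ finitely generated, so that Lemma \ref{HOM-Hom} applies uniformly. Everything else is routine.
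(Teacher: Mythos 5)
Your argument is correct and is the expected one. Note that the paper does not actually give a proof of this lemma: it simply cites [NV; Corollary I.2.12], so there is no in-paper argument to compare against. What you have written is essentially the standard proof one expects to find in that reference: use the left noetherian hypothesis to build a resolution of $M$ by finitely generated graded free modules $P_i = \bigoplus_j A[-g_j]$ (each finitely generated since kernels of maps between noetherian modules are finitely generated), observe that these are free (hence projective) after forgetting the grading, and then apply Lemma \ref{HOM-Hom} termwise to identify the cochain complexes $\HOM_{G\GrMod A}(P_\bullet,N)$ and $\Hom_A(P_\bullet,N)$ on the nose, compatibly with the differentials since both are given by precomposition with the $d_i$. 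The only wrinkle worth making explicit is the remark that $\EXT^i_{G\GrMod A}(M,N)$, though defined as a derived functor of the $\HOM$ bifunctor, can indeed be computed from a graded projective resolution of the first argument — a standard balancing fact, but strictly speaking not ``by definition''; the paper itself tacitly uses the same fact elsewhere (e.g.\ in the proof of Proposition \ref{extension-and-!-and-*}), so this is not an objection to your argument, just a point where ``by definition'' should read ``by the usual balancing of Ext''.
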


Our next aim is to introduce local cohomology in the present context. The definition is
completely analogous to that of local cohomology functors for commutative rings, and the
proof found in [BS, chapter 1] adapt to our context almost verbatim. We fix a
$G$-graded ideal $\a$ of $A$. The \emph{torsion functor} associated to $\a$, denoted by
\[
\Gamma_{\a,G} : G\GrMod A \longrightarrow G\GrMod A,
\]
is defined on objects as
\[
\Gamma_{\a,G}(M) = \{m\in M \tq \a^n m = 0 \mbox{ for } n \gg 0\} \subseteq M,
\]
and sends morphism $M \stackrel{f}{\longrightarrow} N$ to its restriction 
$\Gamma_{\a,G}(M) \stackrel{\Gamma_{\a,G}(f)}{\longrightarrow} \Gamma_{\a,G}(N)$. 
One can check that $\Gamma_{\a,G}$ is left exact. We denote its $i$-th right derived
functor by $H_{\a,G}^i$, and refer to it as the $i$-th local cohomology functor.

On the other hand, we consider the functor
\[
\colim \HOM_{G\GrMod A}(A/\a^n,-) : G\GrMod A  \longrightarrow  G\GrMod A.
\]
It is easy to check that it is left exact and naturally isomorphic to $\Gamma_{\a,G}$.
Standard homological algebra then yields for each $i \in \N$ a natural isomorphism
\[
H_{\a,G}^i \cong \colim \EXT_{G\GrMod A }^i(A/\a^n,-).
\]

\begin{subdefinition} -- \label{lc-def}
We define the local cohomological dimension of $A$ relative to its $G$-grading and to the
ideal $\a$, denoted $\lcd_{\a,G}(A)$, as the cohomological dimension of the functor
$\Gamma_{\a,G}$. That is $\lcd_{\a,G}(A)$ is the least integer $d$ such that
$H_{\a,G}^i(M)=0$ for all integers $i > d$ and all object $M$ of $G\GrMod A$ if such an
integer exists, or $+ \infty$ otherwise.
\end{subdefinition}

\begin{subremark} -- \label{lc-and-suspension} \rm 
Fix $i\in\N$ and $g\in G$. Since suspension is exact and preserves injectives, the
families of functors $(\Sigma_g \circ H^i_{\a,G})_{i \geq 0}$ and $(H^i_{\a,G} \circ
\Sigma_g)_{i \geq 0}$ are universal $\partial$-functors. It is clear that $\Sigma_g \circ
\GG_{\a, G} = \Gamma_{\a,G} \circ \Sigma_g$, and so the general theory of
$\partial$-functors states that there exist isomorphisms $\Sigma_g \circ H^i_{\a,G} \cong
H^i_{\a,G} \circ \Sigma_g$ for all $i \geq 0$.
\end{subremark}

\subsection{Zhang Twists.} \label{twistings}
Throughout this subsection $A$ denotes a $G$-graded algebra. The material is mostly taken
from [Z], where the reader may find the missing proofs. \\

\begin{subdefinition} ([Z; Definitions 2.1, 4.1]) --
\label{LTS}
A \emph{left twisting system} on $A$ over $G$ is a family of graded $\k$-linear
automorphisms $\tau = \{\tau_g| g \in G\}$ such that for any $g,g',g''\in G$ and any
$a,a'\in A$, homogeneous of degree $g$ and $g'$ respectively,
\[
\tau_{g''}(\tau_{g'}(a) a') = \tau_{g'+g''}(a) \tau_{g''}(a').
\]
A \emph{right twisting system} is similar, but the previous condition is replaced by
\[
\tau_{g''}(a \tau_g(a')) = \tau_{g''}(a)\tau_{g''+g}(a').
\]
A \emph{normalized} left, resp. right, twisting system on $A$ over $G$ is a left, resp.
right twisting system on $A$ over $G$ such that $\tau_0(1)=1$.
\end{subdefinition}
It is easy to see that if $\tau$ is a left twisting system for $A$ over $G$, then it is a
right twisting system for $A^\opp$ over $G$. This shows that every theorem on left
twistings has an analogue for right twistings, so we only state the left side versions.
If $\tau = \{\tau_g| g \in G\}$ is a normalized left twisting system on $A$ over $G$, then
$\tau_g(1)=1$ for all $g\in G$, and $\tau_0=\id$. See [Z; Proposition 2.2].

\begin{subtheorem} ([Z; Proposition/Definition 4.2]) --
\label{twisted-algebras}
Let $\tau$ be a left twisting system on $A$. The graded $\k$-vector space $A$ can be endowed
with an associative product denoted by $\circ$, given by
\[
a  \circ a' = \tau_{g'}(a) a'
\]
for all $g, g'\in G$, $a \in A_g$ and $a' \in A_{g'}$. With this product the $\k$-vector space
$A$ becomes a unital associative $G$-graded algebra whose unit is $\tau_0^{-1}(1)$. We
denote this algebra by $^\tau\! A$, and call it the \emph{left twisting} of $A$ by $\tau$.
\end{subtheorem}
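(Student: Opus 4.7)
The plan is to verify in turn that the proposed operation $\circ$ is well defined and compatible with the $G$-grading on $A$, that it is associative, and that $\tau_0^{-1}(1)$ is a two-sided unit. Since $\circ$ is specified only on homogeneous components I would extend it $\k$-bilinearly to all of $A$, and from that point every identity to be checked reduces to an identity on homogeneous elements.

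First I would observe that $\circ$ respects the $G$-grading. Because each $\tau_g$ is a graded $\k$-linear automorphism, for $a \in A_g$ we have $\tau_{g'}(a) \in A_g$, and therefore $a \circ a' = \tau_{g'}(a)\, a' \in A_{g+g'}$. So the decomposition $A = \bigoplus_{g} A_g$ is also a $G$-grading for the new product.

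Next I would establish associativity by a direct computation on homogeneous elements. For $a \in A_g$, $a' \in A_{g'}$ and $a'' \in A_{g''}$,
\[
(a \circ a') \circ a'' = \tau_{g''}\bigl(\tau_{g'}(a)\, a'\bigr)\, a'', \qquad a \circ (a' \circ a'') = \tau_{g'+g''}(a)\, \tau_{g''}(a')\, a'',
\]
and the two right-hand sides coincide precisely by the defining axiom of a left twisting system in Definition \ref{LTS}.

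Finally I would verify that $e := \tau_0^{-1}(1)$, which lies in $A_0$ since $\tau_0$ preserves degrees, is a two-sided unit for $\circ$. For the left unit, given $b \in A_{g'}$, I would apply the twisting axiom with first input $e \in A_0$, second input $b$, and outer shift $-g'$, obtaining
\[
\tau_{-g'}\bigl(\tau_{g'}(e)\, b\bigr) = \tau_0(e)\, \tau_{-g'}(b) = \tau_{-g'}(b),
\]
whence injectivity of $\tau_{-g'}$ gives $e \circ b = \tau_{g'}(e)\, b = b$. For the right unit, given $b \in A_g$, I would apply the axiom with first input $b$, second input $e$, and outer shift $0$:
\[
\tau_0\bigl(\tau_0(b)\, \tau_0^{-1}(1)\bigr) = \tau_0(b)\, \tau_0(e) = \tau_0(b),
\]
and injectivity of $\tau_0$ yields $b \circ e = \tau_0(b)\, \tau_0^{-1}(1) = b$. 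The only mildly subtle point is this last step: because $\tau$ is not assumed normalized, the unit has to be identified as $\tau_0^{-1}(1)$ rather than $1$, and the argument depends on first applying a suitable $\tau_h$ so that the twisting axiom collapses the expression, then using injectivity of $\tau_h$ to recover the desired identity. Beyond this, no step of the proof presents a real obstacle.
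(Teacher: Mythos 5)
Your proof is correct. The paper gives no proof of this statement at all -- it simply cites [Z; Proposition/Definition 4.2] -- so there is no in-paper argument to compare against; but your direct verification is exactly what one would write. The three checks (grading via gradedness of each $\tau_{g'}$, associativity as an immediate consequence of the twisting identity after right-multiplying by $a''$, and identification of the two-sided unit via the ``apply $\tau_h$, collapse, then invert $\tau_h$'' trick) are all carried out correctly, including the slightly delicate right-unit verification with outer shift $0$, which is the step one can most easily get wrong when $\tau$ is not assumed normalized.
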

We sometimes refer to ${}^\tau A$ as a Zhang twist of $A$. By [Z; Proposition 2.4], there
is no loss of generality if we only consider normalized twisting systems.

Two $G$-graded algebra structures over the underlying graded $\k$-vector space of $A$ are
twist-equivalent if one can be obtained from the other through a Zhang twist. This is
an equivalence relation by [Z; Proposition 2.5]. In particular, the following result
holds:
\begin{sublemma} 
\label{twisting-is-er} 
\label{twisted-eq-rel} --
For every left twisting system $\tau$ on $A$ there is a left twisting system $\tau'$ on
${}^\tau\! A$ such that ${}^{\tau'\!}({}^\tau\! A) = A$.
\end{sublemma}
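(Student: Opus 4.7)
The plan is to show that the natural candidate $\tau' = \{\tau_g^{-1} \mid g \in G\}$ is a normalized left twisting system for ${}^\tau\! A$, and that ${}^{\tau'\!}({}^\tau\! A) = A$ on the nose. The guess is motivated by the product formula: if $\circ$ denotes the product in ${}^\tau\! A$ and $a' \in A_{g'}$, then $a \circ' a' = \tau'_{g'}(a) \circ a' = \tau_{g'}(\tau'_{g'}(a)) \cdot a'$, so to recover the original product of $A$ we want $\tau_{g'} \circ \tau'_{g'} = \id$. Each $\tau_g$ is a graded $\k$-linear automorphism of $A$, hence invertible, so this definition is sensible, and by [Z; Proposition 2.4] we may assume $\tau$ is normalized, in which case $\tau'_0 = \id$ is normalized as well.

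The first step is to verify that $\tau'$ satisfies the left twisting axiom on ${}^\tau\! A$, namely
\[
\tau'_{g''}\bigl(\tau'_{g'}(a) \circ a'\bigr) = \tau'_{g'+g''}(a) \circ \tau'_{g''}(a')
\]
for $a \in A_g$ and $a' \in A_{g'}$. For the left-hand side, unfolding the definitions gives $\tau'_{g'}(a) \circ a' = \tau_{g'}(\tau_{g'}^{-1}(a)) \cdot a' = a\cdot a'$, so it equals $\tau_{g''}^{-1}(a\cdot a')$. For the right-hand side, $\tau'_{g''}(a')$ still has degree $g'$, so $\tau'_{g'+g''}(a) \circ \tau'_{g''}(a') = \tau_{g'}(\tau_{g'+g''}^{-1}(a)) \cdot \tau_{g''}^{-1}(a')$. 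The two sides are then shown equal by applying $\tau_{g''}$ to the second expression and using the twisting identity for $\tau$ itself on $\tau_{g'+g''}^{-1}(a) \in A_g$ and $\tau_{g''}^{-1}(a') \in A_{g'}$:
\[
\tau_{g''}\bigl(\tau_{g'}(\tau_{g'+g''}^{-1}(a))\cdot \tau_{g''}^{-1}(a')\bigr) = \tau_{g'+g''}(\tau_{g'+g''}^{-1}(a)) \cdot \tau_{g''}(\tau_{g''}^{-1}(a')) = a\cdot a',
\]
which is exactly $\tau_{g''}$ applied to the left-hand side. Since $\tau_{g''}$ is injective, the desired equality follows.

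Finally, denoting the product of ${}^{\tau'\!}({}^\tau\! A)$ by $\bullet$, for $a\in A_g$ and $a' \in A_{g'}$ one computes
\[
a \bullet a' = \tau'_{g'}(a) \circ a' = \tau_{g'}(\tau_{g'}^{-1}(a)) \cdot a' = a \cdot a',
\]
and the units also match because $\tau$ and hence $\tau'$ are normalized, so the unit of ${}^{\tau'\!}({}^\tau\! A)$ is $(\tau'_0)^{-1}(1_{{}^\tau\! A}) = \tau_0(\tau_0^{-1}(1)) = 1$. Thus ${}^{\tau'\!}({}^\tau\! A) = A$ as $G$-graded $\k$-algebras. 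The only step that requires any care is the verification of the twisting axiom, since it is there that one must keep track of the interplay between degrees and the original twisting relation; once the bookkeeping is right, the computation is essentially forced by the formulas.
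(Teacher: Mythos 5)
Your proof is correct. The paper itself gives no argument for this lemma: it records that twist-equivalence is an equivalence relation and cites [Z; Proposition 2.5], of which the lemma is the symmetry part. Your argument is precisely the explicit content behind that citation: the natural candidate $\tau'_g = \tau_g^{-1}$ works, verifying the left twisting axiom for $\tau'$ on ${}^\tau A$ reduces (after applying $\tau_{g''}$ to both sides and using that each $\tau_{g''}$ is injective) to the twisting axiom for $\tau$ on $A$ applied to $\tau_{g'+g''}^{-1}(a)$ and $\tau_{g''}^{-1}(a')$, and the product $\bullet$ of ${}^{\tau'}({}^\tau A)$ visibly collapses to the original product of $A$. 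So substantively it is the same construction Zhang uses; the only genuine difference is that yours is self-contained rather than delegated to a reference, which makes the mechanism transparent. A minor remark: the reduction to normalized $\tau$ via [Z; Proposition 2.4] is unnecessary here, since your computation of the unit already works in general, namely $(\tau'_0)^{-1}(1_{{}^\tau A}) = \tau_0\bigl(\tau_0^{-1}(1_A)\bigr) = 1_A$ whether or not $\tau_0 = \id$; including the normalization step does no harm but slightly obscures that the construction is unconditional.
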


Let $\tau$ be a left twisting system on $A$, and let $M$ be a $G$-graded $A$-module. Then
there exists a $G$-graded ${}^\tau \! A$-module whose underlying graded $\k$-vector space is
equal to that of $M$, with the action of an element $a \in {}^\tau\! A$ over $m \in M_g$,
with $g \in G$, is given by
\[
 a \circ m = \tau_g (a) \cdot m
\]
where $\cdot$ represents the action of $A$ on $M$. We denote this ${}^\tau\! A$-module by
${}^\tau M$. If $f: M \to N$ is a morphism of $G$-graded $A$-modules then the same
function defines a ${}^\tau\! A$-linear function ${}^\tau\! f: {}^\tau\!M \to {}^\tau\!N$.
This assignation defines a functor $\F_\tau: G\GrMod A \to G\GrMod {}^\tau\!A$.
The following result is crucial for us in the following sections.
\begin{subtheorem} ([Z; Theorem 3.1]) --
\label{category-isomorphism}
The functor $\F_\tau : G\GrMod A \longrightarrow G\GrMod {}^\tau\! A$ sending an object
$M$ to ${}^\tau\! M$ and leaving morphisms unchanged is an isomorphism of categories.
\end{subtheorem}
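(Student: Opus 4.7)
The plan is to show $\F_\tau$ is a well-defined functor and then to construct an explicit two-sided inverse using the reversal of twistings provided by Lemma \ref{twisted-eq-rel}.

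First I would check that the proposed ${}^\tau\! A$-action on ${}^\tau\! M$ is a well-defined $G$-graded module structure. The grading condition $({}^\tau\! A)_g \cdot ({}^\tau\! M)_{g'} \subseteq ({}^\tau\! M)_{g+g'}$ is immediate since each $\tau_g$ preserves degree. Associativity reduces, for homogeneous $a \in A_g$, $b \in A_{g'}$, $m \in M_{g''}$, to the identity
\[
\tau_{g''}(\tau_{g'}(a)\,b)\cdot m \;=\; \tau_{g'+g''}(a)\,\tau_{g''}(b)\cdot m,
\]
which is exactly the left twisting system axiom of Definition \ref{LTS} (applied in $A$ and then acted on $m$). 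That $\tau_0^{-1}(1)$ acts as the identity follows from normalization, which we may assume without loss of generality. Morphisms are handled similarly: if $f\colon M\to N$ is a morphism in $G\GrMod A$, then
\[
f(a\circ m) \;=\; f(\tau_{g'}(a)\cdot m) \;=\; \tau_{g'}(a)\cdot f(m) \;=\; a\circ f(m),
\]
since $f(m)$ sits in degree $g'$. Functoriality is automatic because $\F_\tau$ leaves the set-theoretic maps unchanged.

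For invertibility I would invoke Lemma \ref{twisted-eq-rel} to obtain a twisting system $\tau'$ on ${}^\tau\! A$ with ${}^{\tau'}({}^\tau\! A) = A$. The construction above, applied to $\tau'$, yields a functor $\F_{\tau'}\colon G\GrMod {}^\tau\! A \to G\GrMod A$. Since both $\F_\tau$ and $\F_{\tau'}$ leave morphisms unchanged and preserve the underlying graded $\k$-vector space of every object, showing $\F_{\tau'}\circ \F_\tau = \id_{G\GrMod A}$ reduces to verifying that the twice-twisted action on ${}^{\tau'}({}^\tau\! M)$ coincides with the original $A$-action on $M$. Unraveling the definitions, the twice-twisted action of $a \in A_g$ on $m\in M_{g'}$ equals $\tau_{g'}(\tau'_{g'}(a))\cdot m$, so one is reduced to the identity $\tau_{g'}\circ \tau'_{g'} = \id_A$ for every $g' \in G$. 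This identity is forced by the algebra-level equality ${}^{\tau'}({}^\tau\! A) = A$: comparing products of homogeneous elements of degrees $g$ and $g'$ gives $\tau_{g'}(\tau'_{g'}(a))\, b = ab$ for all $b\in A_{g'}$, whence the claim. A symmetric argument proves $\F_\tau\circ \F_{\tau'} = \id_{G\GrMod {}^\tau\! A}$.

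The proof is essentially bookkeeping of degrees and compositions, and I do not foresee a serious obstacle. The one conceptual point to watch is that we aim for an honest isomorphism of categories rather than an equivalence; this is precisely why one must extract a strict inverse of the twisting operation from Lemma \ref{twisted-eq-rel} instead of settling for a quasi-inverse up to natural isomorphism. Once this strict inverse is in hand, strictness of the two compositions falls out from the fact that both $\F_\tau$ and $\F_{\tau'}$ are the identity on morphisms and on underlying graded $\k$-vector spaces.
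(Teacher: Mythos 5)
The paper itself does not prove this theorem; it quotes it directly from [Z; Theorem 3.1], so there is no in-paper proof to compare against. Your overall plan is the right one and matches Zhang's: check well-definedness of the twisted action, then produce a strict inverse functor by reversing the twist. The well-definedness part and the observation that one needs a \emph{strict} inverse (not merely a quasi-inverse) are both correct.

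The gap is in the last step of the invertibility argument. From the algebra-level equality ${}^{\tau'}({}^\tau A)=A$ you deduce only that $\tau_{g'}(\tau'_{g'}(a))\,b = ab$ for every $b \in A_{g'}$, and you then write ``whence the claim'' to conclude $\tau_{g'}\circ\tau'_{g'}=\id_A$. That inference is not valid in general: if $A_{g'}=0$, or more generally if $A_{g'}$ contains no element that is left-regular in $A$, the displayed identity is vacuous (or non-cancellative) and does not pin down $\tau_{g'}\circ\tau'_{g'}$ on $A_g$. Yet the twice-twisted module action of $a\in A_g$ on $m\in M_{g'}$ is $\tau_{g'}(\tau'_{g'}(a))\cdot m$, and since $M_{g'}$ can be nonzero even when $A_{g'}=0$, you genuinely need $\tau_{g'}\circ\tau'_{g'}=\id$ on all of $A$, for every $g'$, not just on those degrees where cancellation is available. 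Treating Lemma~\ref{twisted-eq-rel} as a black box therefore does not suffice.

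The fix is cheap: instead of invoking only the existence statement of Lemma~\ref{twisted-eq-rel}, use its proof (i.e.\ [Z; Proposition 2.5]), which constructs the reverse twisting system explicitly as $\tau'_g=\tau_g^{-1}$ for all $g\in G$. With this choice $\tau_{g'}\circ\tau'_{g'}=\id_A$ is a tautology, the twice-twisted action on ${}^{\tau'}({}^\tau M)$ is literally the original one, and the strict equalities $\F_{\tau'}\circ\F_\tau=\id$ and $\F_\tau\circ\F_{\tau'}=\id$ follow as you describe. Everything else in your write-up — the verification of associativity via the twisting axiom, the unitality check under normalization, the compatibility of morphisms, and the emphasis on strictness — is correct and is essentially the argument in [Z].
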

The isomorphism $\F_\tau$ is an isomorphism of \emph{abelian categories} and hence
preserves all homological properties of objects.

\begin{subtheorem} --
\label{left-right-TS}
Let $\tau$ be a left twisting system on $A$. For each $g\in G$, let $\nu_g$ be the
$\k$-linear map defined as
\[
	\nu_g(a) = \tau_{-(g+g')}\tau_{-g'}^{-1}(a).
\] 
for all $g' \in G$ and $a \in A_{g'}$. The following hold.
\begin{enumerate}[(i)]
	\item The set $\nu = \{\nu_g\}_{g\in G}$ is a right twisting system on $A$.
	\item The $\k$-linear map $\theta: {}^\tau\! A \to A^\nu$ sending $a \in {}^\tau\!
		A_g$ to $\theta(a) = \tau_{-g}(a)$ for each $g \in G$ is an isomorphism of
		$G$-graded algebras
	\item The change of rings functor $\Theta: G\GrMod ({}^\tau\! A)^\opp \to
		G\GrMod (A^\nu)^\opp$ induced by $\theta^{-1}$ is an isomorphism of
		categories, and $\theta: \Theta({}^\tau\! A) \to A^\nu$ is an isomorphism
		of right $A^\nu$-modules.
	\end{enumerate}
\end{subtheorem}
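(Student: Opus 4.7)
The plan is to establish (i), (ii), (iii) sequentially; (iii) will follow formally from (ii), and (ii) reduces to a single application of the left twisting axiom once the formulas are unpacked. The real computational work is concentrated in (i), namely that $\nu$ is a right twisting system.

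For (i), I first observe that each $\nu_g$ restricts to a graded $\k$-linear automorphism of each $A_{g'}$, being the composite of the graded automorphisms $\tau_{-g'}^{-1}$ and $\tau_{-(g+g')}$. The heart of the matter is the identity $\nu_{g''}(a\nu_g(a')) = \nu_{g''}(a)\nu_{g''+g}(a')$ for homogeneous $a \in A_g$, $a' \in A_{g'}$, $g'' \in G$. The key preparatory step is to derive from Definition \ref{LTS} a companion identity describing $\tau_s^{-1}$ of a product: substituting $a \mapsto \tau_{g'}^{-1}(a)$ in the left twisting axiom and then applying $\tau_{g''}^{-1}$ to both sides yields, after a routine change of variables, $\tau_s^{-1}(\alpha\beta) = \tau_q(\tau_{q+s}^{-1}(\alpha)) \cdot \tau_s^{-1}(\beta)$ for $\alpha \in A_p$, $\beta \in A_q$, $s \in G$. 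With this in hand I would apply it (with $s = -(g+g')$) to strip the outer $\tau_{-(g+g')}^{-1}$ in $\nu_{g''}(a\nu_g(a'))$; the inner $\tau_{-(g+g')}$ coming from $\nu_g(a')$ then cancels. A second application of the original left twisting axiom (with parameter $-(g''+g+g')$) reorganizes the surviving expression into $\nu_{g''}(a) \cdot \nu_{g''+g}(a')$.

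For (ii), $\theta$ is clearly a graded $\k$-linear isomorphism, since $\tau_{-g}$ is invertible on each $A_g$. Multiplicativity amounts to the identity $\theta(a \circ b) = \theta(a) * \theta(b)$ for $a \in A_g$, $b \in A_{g'}$, where $\circ$ and $*$ denote the products in ${}^\tau\! A$ and $A^\nu$ respectively. Unpacking, $\theta(a) * \theta(b) = \tau_{-g}(a) \cdot \nu_g(\tau_{-g'}(b))$, where the inner $\tau_{-g'}^{-1}\tau_{-g'}$ collapses to give $\tau_{-g}(a) \cdot \tau_{-(g+g')}(b)$. On the other hand $\theta(a \circ b) = \tau_{-(g+g')}(\tau_{g'}(a) b)$, which reduces to the same expression by one application of the left twisting axiom with $g'' = -(g+g')$.

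Item (iii) is a formal consequence: restriction of scalars along the graded algebra isomorphism $\theta^{-1}$ is an isomorphism of categories, with inverse given by restriction along $\theta$; and right $A^\nu$-linearity of $\theta: \Theta({}^\tau\! A) \to A^\nu$ follows from (ii) via $\theta(m \cdot a) = \theta(m \circ \theta^{-1}(a)) = \theta(m) * a$. The main obstacle throughout is bookkeeping: tracking the many indices carried by $\nu_g$ and selecting the correct parameters in the left twisting axiom to trigger the desired cancellations.
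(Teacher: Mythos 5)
Your argument is correct. The paper proves (i) and (ii) simply by citing [Z; Theorem 4.3] and notes (iii) is immediate; you have reconstructed the underlying computation. The companion identity $\tau_s^{-1}(\alpha\beta) = \tau_q(\tau_{q+s}^{-1}(\alpha))\,\tau_s^{-1}(\beta)$ ($\alpha\in A_p$, $\beta\in A_q$) is indeed the right lever: applying it with $s=-(g+g')$ collapses $\tau_{-(g+g')}^{-1}(a\,\nu_g(a'))$ to $\tau_{g'}(\tau_{-g}^{-1}(a))\,\tau_{-g'}^{-1}(a')$, and one application of the original left axiom with parameter $-(g''+g+g')$ then reassembles this into $\nu_{g''}(a)\,\nu_{g''+g}(a')$, establishing (i). For (ii), both $\theta(a\circ b)$ and $\theta(a)*\theta(b)$ reduce (the latter because $\nu_g\tau_{-g'}=\tau_{-(g+g')}$) to $\tau_{-g}(a)\,\tau_{-(g+g')}(b)$, and (iii) is the formal consequence you describe. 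Since the paper itself gives no proof beyond the citation, your proposal supplies exactly the details the cited source would contain; there is no divergence in method, only in level of detail.
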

\begin{proof}
Points (i) and (ii) are [Z, Theorem 4.3]. Point (iii) follows at once. 
\end{proof}

\begin{subremark} -- \rm \label{left-right-cat-isomorphism}
Let $\tau$ be a left twisting system on $A$. Using Theorem \ref{category-isomorphism} and 
Theorem \ref{left-right-TS} we get that the category of right $A$-modules is isomorphic to
the category of right ${}^\tau\! A$-modules.
\end{subremark}

If $V$ is a graded subspace of $A$, then by abuse of notation we write ${}^\tau V$ when we
consider $V$ as a graded subspace of ${}^\tau A$. Notice that if $V$ is a left ideal of $A$
then it is a graded submodule, and so ${}^\tau V$ is a left ideal of ${}^\tau A$. The
following proposition, whose proof is straightforward, clarifies eventual ambiguities that
might arise due to this notation.
\begin{subproposition} -- \label{Z-twist-on-subalgebras}
Let $\tau$ be a left twisting system on $A$ over $G$ and denote by $\circ$ the product on
$A$ defined in Theorem \ref{twisted-algebras}. 
\begin{enumerate}
	\item Suppose $B$ is a graded subalgebra of $A$ such that $\tau_g(B) \subseteq B$
		for all $g \in G$. Then $({}^\tau\! B, \circ)$ is a subalgebra of
		$({}^\tau\! A, \circ)$. Furthermore $\tau$ induces by restriction a
		twisting system on $B$ over $G$, and the twist of $B$ by this induced
		system is equal to $({}^\tau\! B, \circ)$.
	
	\item Suppose that $\a$ is a graded two-sided ideal of $A$ such that $\tau_g(\a)
		\subseteq \a$ for all $g \in G$. Then ${}^\tau\! \a$ is a graded two-sided
		ideal of ${}^\tau\! A$.
	\end{enumerate}
\end{subproposition}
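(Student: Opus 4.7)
The statement splits into two parts, each reducing to a direct unwinding of the twisted product $a \circ a' = \tau_{g'}(a)\, a'$ for homogeneous $a \in A_g$, $a' \in A_{g'}$. Throughout I would assume by [Z; Proposition 2.4] that $\tau$ is normalized, so that $\tau_0 = \id$ and $\tau_g(1) = 1$ for all $g \in G$; in particular the unit $\tau_0^{-1}(1) = 1$ of ${}^\tau\! A$ already lies in $B$.

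For part (1), the first point is that $B$ is closed under $\circ$: for homogeneous $a \in B_g$ and $a' \in B_{g'}$, the stability hypothesis gives $\tau_{g'}(a) \in B$, and then $\tau_{g'}(a)\, a' \in B$ since $B$ is a subalgebra of $A$. Hence $({}^\tau\! B, \circ)$ is a graded $\k$-subalgebra of $({}^\tau\! A, \circ)$. To see that $\{\tau_g|_B\}_{g \in G}$ is a normalized left twisting system on $B$, I would observe that the twisting identity from Definition \ref{LTS} holds in $A$ and all of its terms already lie in $B$ by the stability assumption, while normalization is inherited tautologically. Once the restriction is a twisting system, the twist of $B$ by $\{\tau_g|_B\}_{g \in G}$ carries the product $a \circ a' = \tau_{g'}|_B(a)\, a' = \tau_{g'}(a)\, a'$, which agrees with the restriction of $\circ$ from ${}^\tau\! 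A$, giving the required identification with $({}^\tau\! B, \circ)$.

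The main technical point, and the only step I expect to require care, is that each $\tau_g|_B$ must be a graded $\k$-linear \emph{automorphism} of $B$, not merely an endomorphism: injectivity is automatic from the injectivity of $\tau_g$ on $A$, but surjectivity does not follow formally from $\tau_g(B) \subseteq B$ in general. In the typical setting where each homogeneous component $B_h$ is finite-dimensional, an injective graded endomorphism of $B$ is surjective degree by degree, which resolves the issue; in any case, the subalgebra part of the statement does not depend on this.

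For part (2), I would argue by bilinearity from the homogeneous case. For $a \in A_g$ and $x \in \a_{g'}$, one has $a \circ x = \tau_{g'}(a)\, x \in \a$ because $\a$ is a left ideal of $A$, and $x \circ a = \tau_g(x)\, a \in \a$ because $\tau_g(x) \in \tau_g(\a) \subseteq \a$ by hypothesis and $\a$ is a right ideal. Hence ${}^\tau\! \a$, whose underlying graded vector space coincides with that of $\a$, absorbs $\circ$-multiplication from ${}^\tau\! A$ on both sides and is therefore a graded two-sided ideal of ${}^\tau\! A$.
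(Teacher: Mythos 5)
Your proof is correct, and since the paper itself omits the argument (describing it merely as ``straightforward''), what you give is the obvious unwinding of $a \circ a' = \tau_{g'}(a)\,a'$ that the authors surely had in mind. The computations for closure of $B$ under $\circ$, the identification of the restricted twist with $({}^\tau B, \circ)$, and both absorption properties of $\a$ (noting that only the right-ideal half actually uses the stability hypothesis, since the left-ideal half works for any graded submodule, as the paper points out just before the proposition) are all right.

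Your remark about surjectivity of $\tau_g|_B$ is a genuine and well-placed observation: the hypothesis $\tau_g(B)\subseteq B$ yields only an injective graded endomorphism of $B$, whereas Definition \ref{LTS} requires automorphisms, and since the $\tau_g$ do not form a group under composition, $\tau_g^{-1}$ need not be some $\tau_h$, so closure under inversion is not automatic. It is telling that the paper writes the analogous condition in Lemma \ref{twisting-torsion} as an equality $\tau_g(\a)=\a$ rather than an inclusion, which suggests that equality is the intended hypothesis here as well; alternatively, your observation that locally finite gradings (as in all of the paper's applications, where the algebras are connected noetherian) force degreewise surjectivity resolves the point. Either way you correctly note that the subalgebra assertion is unaffected.
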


The following lemma shows that twisting commutes with local cohomology.

\begin{sublemma} -- \label{twisting-torsion}
Let $\tau$ be a left twisting system on $A$ over $G$ and let $\a$ be a graded
ideal of $A$ such that $\tau_g(\a) = \a$ for all $g \in G$. For all $i \in \N$ there are
natural isomorphisms $H_{{}^\tau\! \a,G}^i \circ \F_\tau\cong \F_\tau\circ H_{\a,G}^i$.
\end{sublemma}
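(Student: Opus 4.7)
The plan is to deduce the claimed isomorphism at the level of the $i$-th derived functors from an equality of the underlying torsion functors, using that $\F_\tau$ is an exact isomorphism of abelian categories (Theorem \ref{category-isomorphism}) and, in particular, preserves injective objects.

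First I would establish that $\F_\tau\circ \Gamma_{\a,G} = \Gamma_{{}^\tau\!\a,G}\circ \F_\tau$ as functors. The key point is that for every homogeneous element $m\in M_g$ and every $n\geq 1$, the subsets $\a^n\, m\subseteq M$ and $({}^\tau\!\a)^n\circ m\subseteq {}^\tau\!M$ coincide as subsets of the common underlying graded $\k$-vector space. I would prove this by induction on $n$: the base case $n=1$ follows at once from the defining formula $a\circ m=\tau_g(a)\, m$ together with the hypothesis $\tau_g(\a)=\a$ (which also yields $\tau_g^{-1}(\a)=\a$), and the inductive step follows by the same formula combined with associativity of both the $A$-action on $M$ and the ${}^\tau\!A$-action on ${}^\tau\!M$. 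Because both torsion submodules are graded, the identity extends to arbitrary $m\in M$, and the resulting graded subspace $\Gamma_{\a,G}(M)=\Gamma_{{}^\tau\!\a,G}({}^\tau\!M)$ inherits the same ${}^\tau\!A$-submodule structure from ${}^\tau\!M$ on either side; naturality in $M$ is straightforward.

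Given this identification, the remainder is a routine application of derived functor formalism. Since $\F_\tau$ is exact, $R^i(\F_\tau\circ \Gamma_{\a,G})\cong \F_\tau\circ H^i_{\a,G}$, while the fact that $\F_\tau$ preserves injectives gives $R^i(\Gamma_{{}^\tau\!\a,G}\circ \F_\tau)\cong H^i_{{}^\tau\!\a,G}\circ \F_\tau$ (apply $\Gamma_{{}^\tau\!\a,G}$ to the injective resolution $\F_\tau(I^\bullet)$ of $\F_\tau(M)$ obtained from an injective resolution $M\to I^\bullet$). Combining these isomorphisms with the equality of underlying functors yields the required natural isomorphism.

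The main obstacle is the inductive identification $({}^\tau\!\a)^n\circ m=\a^n\, m$. Although fundamentally a bookkeeping exercise, it must be carried out carefully since one is simultaneously juggling two different multiplications on the common underlying vector space; it is precisely the assumption $\tau_g(\a)=\a$ that keeps these multiplications compatible on $\a$ throughout the induction. Once this identity is established, the rest of the argument is formal.
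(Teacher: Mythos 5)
Your proof is correct and follows essentially the same route as the paper: the paper likewise first observes $\Gamma_{{}^\tau\!\a,G}\circ\F_\tau=\F_\tau\circ\Gamma_{\a,G}$ (calling it ``an easy verification,'' which you have usefully spelled out), and then lifts this to all $i$ using that $\F_\tau$ is exact and preserves injectives, phrased in the language of universal $\partial$-functors rather than your direct computation of $R^i$ of the two compositions -- a purely cosmetic difference.
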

 
\begin{proof} An easy verification shows that $\Gamma_{{}^\tau\! \a,G} \circ \F_\tau =
\F_\tau\circ \Gamma_{\a,G}$. Since $\F_\tau$ is exact and preserves injectives, the
families $(\F_\tau\circ H_{\a,G}^i)_{i\in\N}$ and
$(H_{^\tau\!\a,G}^i\circ\F_\tau)_{i\in\N}$ are universal $\partial$-functors.  
Hence the equality extends to give natural isomorphisms
$H_{{}^\tau\!\a,G}^i \circ \F_\tau\cong \F_\tau\circ H_{\a,G}^i$ for all $i \in\N$.\end{proof}

We will need the following result in future sections. It is adapted from [Z, section 5].
\begin{subproposition} -- \label{twisting-ext}
For all $i\in\N$, there are natural isomorphisms $\EXT^i_{G\GrMod {}^\tau\!
A}(-,{}^\tau\! A) \circ \F_\tau \cong \EXT^i_{G\GrMod A}(-,A)$ seen as functors from
$G\GrMod A$ to $G\GrMod \k$.
\end{subproposition}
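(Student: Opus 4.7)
The plan is to derive the statement from Theorem \ref{category-isomorphism}, which tells us that $\F_\tau$ is an isomorphism of abelian categories and in particular is exact and preserves projective objects. One notes moreover that $\F_\tau$ applied to the regular module $A$ yields exactly ${}^\tau\! A$ as a left ${}^\tau\! A$-module, since both carry the action $b \circ m = \tau_{\deg m}(b) m$. With these inputs in hand, the standard derived-functor machinery reduces the problem to the case $i = 0$.

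For $i = 0$, I would build the natural isomorphism
\[
\HOM_{G\GrMod A}(-, A) \xrightarrow{\sim} \HOM_{G\GrMod {}^\tau\! A}(-, {}^\tau\! A) \circ \F_\tau
\]
one homogeneous component at a time. For each $g \in G$, the category isomorphism yields a bijection
\[
\Hom_{G\GrMod A}(M, A[g]) \xrightarrow{\sim} \Hom_{G\GrMod {}^\tau\! A}({}^\tau\! M, {}^\tau\!(A[g])).
\]
I would then exhibit an explicit isomorphism $\Psi_g : ({}^\tau\! A)[g] \to {}^\tau\!(A[g])$ in $G\GrMod {}^\tau\! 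A$ by setting $\Psi_g(x) = \tau_{-g}(x)$ for $x \in A$. The ${}^\tau\! A$-linearity of $\Psi_g$ reduces to the identity $\tau_{-g}(\tau_h(b)\, x) = \tau_{h - g}(b)\, \tau_{-g}(x)$ for $x \in A_h$, which is precisely the twisting axiom of Definition \ref{LTS} applied with $g'' = -g$; bijectivity is automatic since each $\tau_{-g}$ is a graded $\k$-linear automorphism of $A$. Composing with $\Psi_g$ and summing over $g \in G$ yields the desired graded isomorphism, and its naturality in $M$ follows from the functoriality of $\F_\tau$ together with the fact that $\Psi_g$ does not depend on $M$.

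For general $i$, I would pick a projective resolution $P_\bullet \to M$ in $G\GrMod A$. Since $\F_\tau$ is exact and preserves projectives, $\F_\tau(P_\bullet) \to {}^\tau\! M$ is a projective resolution in $G\GrMod {}^\tau\! A$. Applying the $i = 0$ natural isomorphism termwise to $P_\bullet$ produces an isomorphism of cochain complexes in $G\GrMod \k$, and passing to cohomology yields the required natural isomorphism $\EXT^i_{G\GrMod {}^\tau\! A}(-, {}^\tau\! A) \circ \F_\tau \cong \EXT^i_{G\GrMod A}(-, A)$ for all $i \in \N$. The only delicate step is the construction of the comparison maps $\Psi_g$, which is exactly where the difference between the twisted and shifted actions on $A$ has to be reconciled; once $\Psi_g$ is identified the rest of the argument is purely formal derived-functor bookkeeping.
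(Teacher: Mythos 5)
Your proposal is correct and follows essentially the same route as the paper: both hinge on the explicit isomorphism $({}^\tau\! A)[g] \to {}^\tau\!(A[g])$ given by $\tau_{-g}$ (which the paper cites from [Z; Theorem 3.4] and you verify directly from the twisting axiom), and both then extend from $i = 0$ to all $i$ using exactness of $\F_\tau$ and preservation of projectives. The only cosmetic difference is that the paper phrases the last step via universal $\partial$-functors while you compute directly with a projective resolution; these are interchangeable.
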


\begin{proof} It is easy to see that $({}^\tau\! A)[g] \stackrel{\tau_{-g}}{\longrightarrow}
{}^\tau\! (A[g])$ is an  isomorphism in $G\GrMod({}^\tau\! A)$ for all $g \in G$, see [Z~;
Theorem 3.4]. It follows that we have natural isomorphisms $\HOM_{G\GrMod {}^\tau\!
A}(-,{}^\tau\! A) \circ \F_\tau \cong \HOM_{G\GrMod A}(-,A)$ and, $\F_\tau$ being an
exact functor, $(\EXT^i_{G\GrMod {}^\tau\! A}(-,{}^\tau\! A) \circ \F_\tau)_i$ is a
(contravariant) $\partial$-functor which is universal since $\F_\tau$ preserves
projectives. From this it follows that there exist two natural
isomorphisms $\EXT^i_{G\GrMod {}^\tau\! A}(-,{}^\tau\! A) \circ \F_\tau \cong
\EXT^i_{G\GrMod A}(-,A)$  for all $i\in\N$. \end{proof}

\subsection{Change of grading groups.} \label{ss-change-grading-group}

In this subsection, $G$ and $H$ are commutative groups and $\phi: G \longrightarrow H$ is
a group homomorphism; we view $\k$ as a $G$ and $H$-graded algebra concentrated in degree
$0$. 

The morphism $\phi$ induces three functors between the category of $G$-graded $\k$-vector
spaces and the category of $H$-graded $\k$-vector spaces, which we will now describe.
Let $M, M'$ be $G$-graded $\k$-vector spaces and $f: M \longrightarrow M'$ a
$G$-homogeneous morphism. Given $g \in G$ we denote by $f_g :M_g \longrightarrow M'_g$ 
the homogeneous component of $f$ of degree $g$, so that $f = \bigoplus_{g \in G}f_g$. The
same convention is adopted for morphisms of $H\GrMod A $.

\paragraph{- The \emph{shriek} functor, $\phi_! : G \GrMod \k \to H \GrMod \k$.} For every
$h \in H$, the homogeneous components of degree $h$ of $\phi_!(M)$ and $\phi_!(f)$ are
given by
\[
	\phi_!(M)_h = \bigoplus_{g \in \phi^{-1}(h)}M_g, \ \ \ \phi_!(f)_h = \bigoplus_{g
	\in \phi^{-1}(h)} f_g.
\]

\paragraph{- The \emph{lower star} functor, $\phi_*: G \GrMod \k \to H \GrMod \k$.} Once
again we give the homogeneous components of degree $h$ of $\phi_*(M)$ and $\phi_*(f)$:
\[
	\phi_*(M)_h = \prod_{g \in \phi^{-1}(h)}M_g, \ \ \ \phi_*(f)_h = \prod_{g
	\in \phi^{-1}(h)} f_g.
\]

\paragraph{- The \emph{upper star} functor, $\phi^* : H\GrMod \k \to G\GrMod \k$.}
Let $N, N'$ be objects of $H\GrMod A $, and $f: N \to N'$ a morphism of $H$-graded
modules. For every $g \in G$ the homogeneous components of $\phi^*(N)$ and $\phi^*(f)$ are
given by
\[
	\phi^*(N)_g = N_{\phi(g)}, \ \ \ \phi^*(f) = f_{\phi(g)}.
\]

Functoriality is easy to establish in all three cases. Notice that for every $h \in H$
there are several copies of the homogeneous component $N_h$ inside $\phi^*(N)$; in
particular if $g,g' \in \phi^{-1}(h)$ then $\phi^*(N)_g = \phi^*(N)_{g'} = N_h$. In order
to distinguish the elements in these two homogeneous components we will use the following
notational device: for every $n \in N$ we will denote by $n u_g$ the element $n \in
\phi^*(N)_g$. Notice that $n u_g$ makes sense only if $\deg n =\phi(g)$.

\begin{subremark} -- \label{faithful-exact} \rm
\begin{enumerate}
\item There is a natural transformation $\phi_! \to \phi_*$ induced by the natural inclusion
of the direct sum of a family of $\k$-vector spaces in its direct product. For a $G$-graded
$\k$-vector space $M$ the corresponding morphism  $\phi_!(M) \to \phi_*(M)$ is an
isomorphism if and only if $\supp (M) \cap \phi^{-1}(h)$ is a finite set for every $h \in
H$. Any $G$-graded $\k$-vector space with this property is called \emph{$\phi$-finite}. 
\item Let $L,M, N$ be graded $\k$-vector spaces and let
\[
0 \to L \to M \to N \to 0
\]
be a complex. This sequence is exact if and only if it is exact at each homogeneous
component. From this observation, it follows that the functors  $\phi^*$, $\phi_!$ and
$\phi_*$ are exact.
\item Let $M$ be an object of $G\GrMod\k$. It is clear by definition that if $\phi_!(M)=0$
or $\phi_*(M)=0$ then $M=0$. However this holds for $\phi^*$ only if $\phi$ is surjective.
\item From the above it follows that given any (co)chain complex $C$ in $G\GrMod\k$, the
complex $\phi_!(C)$ (resp. $\phi_*(C)$) is exact at homological degree $i\in\Z$ if and
only if $C$ is exact at degree $i$. This holds for $\phi^*$ only if $\phi$ is surjective.
\end{enumerate}
\end{subremark}

For the remainder of this subsection, we fix a $G$-graded $\k$-algebra $A$. Clearly, the
$\k$-vector space $\phi_!(A)$ is an $H$-graded $\k$-algebra. We will simply write $A$ in both
cases since the context will always make it clear which grading we are considering. Notice
that any $G$-homogeneous ideal of $A$ is also $H$-homogeneous.

Our aim now is to consider functors between the categories $G\GrMod A$ and $H\GrMod A$
naturally induced from $\phi_!$, $\phi_*$ and $\phi^*$. We will denote by $F_G : G\GrMod A
\to G\GrMod\k$ and $F_H : H\GrMod A \to H\GrMod\k$ the corresponding forgetful functors. 

Given $G$-graded $A$-modules $M, M'$ and a morphism $f: M \to M'$, we define $\phi_!(M)$
and $\phi_!(f)$ as before, and leave the action of $A$ on $M$ unchanged. We also define
$\phi_*(M)$ and $\phi_*(f)$ as before. The $A$-module structure on $\phi_*(M)$ is given as
follows: for every $h \in H$ and $(m_g)_{g \in \phi^{-1}(h)} \in \phi_*(M)_h$, the action
of a homogeneous element $a \in A_{g'}$ is given by $a (m_g)_{g \in \phi^{-1}(h)} =
(am_g)_{g \in \phi^{-1}(h)} \in \phi_*(M)_{\phi(g') + h}$. Finally for $H$-graded modules
$N, N'$ and a morphism $f: N \to N'$, we set $\phi^*(N)$ and $\phi^*(f)$ as before. The
action of a homogeneous element $a \in A_{g'}$ over $\phi^*(N)$ is defined as follows: for
each $n u_g \in \phi^*(N)_g$ we set $a (n u_g) = (an)u_{g'+g}$. The fact that the action
of $A$ is compatible with the gradings in each case is a routine verification.

Notice that by definition $\phi_! \circ F_G = F_H \circ \phi_!$ and similar equalities
hold for the other change of grading functors; it follows that Remark \ref{faithful-exact}
extends to the three functors defined at the level of graded $A$-modules. From this
point on $\phi_!, \phi_*$ and $\phi^*$ will denote the functors defined at the level
of graded $A$-modules; of course this includes the case $A = \k$. The main result
regarding them is the following, which asserts that $(\phi_!, \phi^*, \phi_*)$ is an
adjoint triple.

\begin{subproposition} -- \label{true-adjoints}
The functor $\phi^*$ is right adjoint to $\phi_!$ and left adjoint to $\phi_*$
\end{subproposition}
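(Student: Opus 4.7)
The plan is to establish both adjunctions by constructing the unit and counit maps (or equivalently the hom-set bijections) explicitly at the level of graded $\k$-vector spaces, then verifying that they respect the $A$-action.

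For the adjunction $\phi_! \dashv \phi^*$, I would produce a natural isomorphism
\[
\Hom_{H\GrMod A}(\phi_!(M),N) \longrightarrow \Hom_{G\GrMod A}(M,\phi^*(N))
\]
as follows. Given a degree-$0$ $H$-homogeneous $A$-linear map $f:\phi_!(M)\to N$, for each $g\in G$ and $m\in M_g$ the element $m$ sits in $\phi_!(M)_{\phi(g)}$, so $f(m)\in N_{\phi(g)}=\phi^*(N)_g$; set $\tilde f(m)=f(m)u_g$. Conversely, given $\tilde f:M\to\phi^*(N)$, its $g$-th component is a map $M_g\to N_{\phi(g)}$, and summing over $g\in\phi^{-1}(h)$ recovers an $H$-homogeneous map $\phi_!(M)_h\to N_h$. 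The mutual inverseness is immediate from the definitions, and $A$-linearity transfers because the action of $a\in A_{g'}$ on $n u_g\in\phi^*(N)$ is by definition $(an)u_{g+g'}$, matching the action on $\phi_!(M)$ which is inherited unchanged from $M$.

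For $\phi^*\dashv \phi_*$, I would produce a natural isomorphism
\[
\Hom_{G\GrMod A}(\phi^*(N),M)\longrightarrow\Hom_{H\GrMod A}(N,\phi_*(M)).
\]
Given a $G$-homogeneous $A$-linear $\tilde g:\phi^*(N)\to M$, for $n\in N_h$ and each $g\in\phi^{-1}(h)$ the vector $n u_g$ lies in $\phi^*(N)_g$, so $\tilde g(nu_g)\in M_g$; define $g(n)=(\tilde g(nu_g))_{g\in\phi^{-1}(h)}\in\prod_{g\in\phi^{-1}(h)}M_g=\phi_*(M)_h$. In the other direction, given $g:N\to\phi_*(M)$, composition of $g|_{N_{\phi(g)}}$ with projection onto the $g$-th factor of $\phi_*(M)_{\phi(g)}$ gives a map $\phi^*(N)_g\to M_g$, and these assemble to a $G$-homogeneous map $\phi^*(N)\to M$. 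These assignments are clearly inverse bijections.

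The only real content beyond bookkeeping is checking $A$-linearity in the second adjunction. For $a\in A_{g'}$, the $g''$-component of $a\cdot g(n)\in\phi_*(M)_{g'+h}$ (for $g''\in g'+\phi^{-1}(h)$) is, by the definition of the action on $\phi_*(M)$, the element $a\cdot \tilde g(nu_{g''-g'})$; on the other hand, the $g''$-component of $g(a\cdot n)$ is $\tilde g((an)u_{g''})=\tilde g(a\cdot(nu_{g''-g'}))$, which equals the previous expression by $A$-linearity of $\tilde g$. Naturality in both variables and in both adjunctions is a straightforward diagram chase on components. The main subtlety, and the only place requiring care, is keeping the indexing device $n\mapsto nu_g$ consistent so that the action of $A$ on $\phi^*(N)$ transports correctly through the bijection; once this is done, both statements follow without further input.
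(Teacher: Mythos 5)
Your proof is correct and follows essentially the same approach as the paper: constructing the hom-set bijections explicitly using the $u_g$ notational device, with mutually inverse assignments in both adjunctions. You actually spell out the $A$-linearity verification for the second adjunction in more detail than the paper, which leaves that check to the reader.
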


\begin{proof} Let $M$ be an object in $G \GrMod A$ and $N$ be an object in $H\GrMod A$. 
We first prove the existence of an isomorphism
\[
	\alpha: \Hom_{H\GrMod A}(\phi_!(M),N) \to \Hom_{G\GrMod A}(M,\phi^*(N)).
\]
Given $f: \phi_!(M) \to N$ of degree $0_H$, set $\alpha(f): M \to \phi^*(N)$ to be the
morphism given by the assignation $m \in M_g \mapsto f(m)u_g$. It is routine to check that
this is an $A$-linear morphism of degree $0_G$. Now given $f \in \Hom_{G\GrMod
A}(M,\phi^*(N))$, we define $\beta(f): \phi_!(M) \to N$ as follows: notice that it is
enough to define $\beta(f)$ over $G$-homogeneous elements of $M$ and that given
$m \in M_g$ we know that $f(m) = n u_g$ with $\deg n = \phi(g)$, so setting $\beta(f)(m) =
n$ we get an $H$-homogeneous $A$-linear morphism. Once again it is routine to check that
$\alpha$ and $\beta$ are inverses, and that they are in fact natural in both variables. 

Now we establish the existence of an isomorphism
\[
	\rho: \Hom_{H\GrMod A}(N,\phi_*(M)) \to \Hom_{G\GrMod A}(\phi^*(N), M).
\]
Fix $f: N \to \phi_*(M)$. For every $g \in G$ and every $nu_g \in \phi^*(N)_g$, we know
that $f(n) = (m_{g'})_{g' \in \phi^{-1}(\phi(g))}$, so we set $\rho(f)(n u_g) = m_g$. Its
inverse is given as follows: for every $f \in \Hom_{G\GrMod A}(\phi^*(N), M)$ and every $n
\in N_h$ with $h \in H$, we set $\varepsilon(f)(n) = (f(nu_g))_{g \in \phi^{-1}(h)}$ if $h
\in \operatorname{im} \phi$, and $0$ otherwise. We leave the task of checking the
good-definition and naturality of both morphisms, as well as the proof that $\rho$ and
$\varepsilon$ are inverses, to the interested reader.
\end{proof}

Since the change of grading functors are exact, Proposition \ref{true-adjoints} has the
following consequence. The proof is standard homological algebra, and can be found for
example in [W, Proposition 2.3.10].
\begin{subcorollary} -- \label{proj-inj-functors} The following properties hold:
\begin{enumerate}
\item the image by $\phi_!$ of a projective object is projective; 
\item the image by $\phi_*$ of an injectives object is injective;
\item the image by $\phi^*$ of an injective (resp. projective) object is injective (resp.
projective).
\end{enumerate}
\end{subcorollary}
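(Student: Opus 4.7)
The plan is to apply the standard homological principle: a left adjoint of an exact functor preserves projectives, and a right adjoint of an exact functor preserves injectives. Proposition \ref{true-adjoints} provides the adjoint triple $(\phi_!, \phi^*, \phi_*)$, and Remark \ref{faithful-exact} (together with its extension to graded modules noted right before the proposition) guarantees that all three change-of-grading functors are exact. So everything is in place to invoke this principle directly.

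More concretely, I would proceed as follows. For point (1), observe that $\phi_!$ is left adjoint to $\phi^*$, and $\phi^*$ is exact; so for any projective $P$ in $G\GrMod A$, the functor $\Hom_{H\GrMod A}(\phi_!(P),-) \cong \Hom_{G\GrMod A}(P, \phi^*(-))$ is the composition of two exact functors, hence exact, giving projectivity of $\phi_!(P)$. For point (2), $\phi_*$ is right adjoint to $\phi^*$, and $\phi^*$ is exact; so for any injective $I$ in $G\GrMod A$, the functor $\Hom_{H\GrMod A}(-,\phi_*(I)) \cong \Hom_{G\GrMod A}(\phi^*(-), I)$ is exact, giving injectivity of $\phi_*(I)$. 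For point (3), the same argument applied in both directions: the exactness of $\phi_!$ combined with the adjunction $(\phi_!, \phi^*)$ shows that $\phi^*$ sends injectives to injectives, and the exactness of $\phi_*$ combined with the adjunction $(\phi^*, \phi_*)$ shows that $\phi^*$ sends projectives to projectives.

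The whole argument is a formal consequence of the two inputs already established (the adjoint triple and the exactness of each functor), and a reference such as [W, Proposition 2.3.10] can be cited to avoid rewriting the standard verification. There is no real obstacle here; the only thing to be careful about is checking that the adjunctions from Proposition \ref{true-adjoints} are set up in the direction I expect, so that the exact functor appearing on each side really is the one claimed. Once that is verified the three assertions follow with essentially no further computation.
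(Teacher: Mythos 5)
Your argument is correct and is precisely the one the paper uses: the paper's proof consists of the single sentence that it is ``standard homological algebra'' following from the adjoint triple of Proposition \ref{true-adjoints} and the exactness recorded in Remark \ref{faithful-exact}, with the same citation to [W, Proposition 2.3.10]. Your careful matching of each adjunction with the exactness of the appropriate adjoint (and the explicit Hom-functor compositions) is exactly the content being invoked.
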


The next two corollaries refine this last result.
\begin{subcorollary} -- \label{dim-and-!}
Let $M$ be an object of $G\GrMod A$.
\begin{enumerate}
\item $G\pd M = H\pd \phi_!(M)$; 
\item $G\injdim M  \le H\injdim \phi_!(M)$.
\end{enumerate}
\end{subcorollary}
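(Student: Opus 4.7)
The plan is to show that $M$ is a direct summand of $\phi^*\phi_!(M)$ in $G\GrMod A$, and then to transfer bounds on the $H$-graded dimensions of $\phi_!(M)$ back to those of $M$ via the exact functor $\phi^*$.

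First, I will unpack the unit $\eta_M: M \to \phi^*\phi_!(M)$ of the adjunction $\phi_! \dashv \phi^*$. From the formula for the adjunction isomorphism $\alpha$ established in the proof of Proposition \ref{true-adjoints}, $\eta_M$ sends $m \in M_g$ to $mu_g$, so it is the inclusion of $M_g$ into the summand indexed by $g' = g$ of the direct sum decomposition $\phi^*\phi_!(M)_g = \bigoplus_{g' \in \phi^{-1}(\phi(g))} M_{g'}$. The key observation is that $\eta_M$ is a split monomorphism in $G\GrMod A$: the projection $\pi_M: \phi^*\phi_!(M) \to M$ onto the $g' = g$ summand is a one-sided inverse. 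The main computation required is to check that $\pi_M$ is $A$-linear; this will be a direct verification using the explicit description $a \cdot (nu_g) = (an)u_{g+g_0}$ of the $A$-action on $\phi^*\phi_!(M)$ from the previous subsection, together with the observation that multiplication by $a \in A_{g_0}$ induces the reindexing $g' \mapsto g' + g_0$ of the summands. This $A$-linearity check is the only technically delicate step in the proof.

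Once the split monomorphism is established, (1) and (2) will follow by standard dimension-shift arguments. For the inequality $H\pd \phi_!(M) \leq G\pd M$ in (1), I will apply the exact, projective-preserving functor $\phi_!$ (see Remark \ref{faithful-exact} and Corollary \ref{proj-inj-functors}) to a minimal-length projective resolution of $M$, obtaining a projective resolution of $\phi_!(M)$ of the same length. For the reverse inequality in (1), and for the inequality in (2), I will instead apply the exact functor $\phi^*$, which preserves both projectives and injectives, to a minimal-length projective (resp.\ injective) resolution of $\phi_!(M)$ in $H\GrMod A$; this yields a resolution of the same length of $\phi^*\phi_!(M)$ in $G\GrMod A$, and since $M$ is a direct summand of $\phi^*\phi_!(M)$ in that category, its projective (resp.\ injective) dimension is bounded above by that of $\phi^*\phi_!(M)$. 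Note that the analogous argument for the reverse inequality in (2) fails because $\phi_!$, being only a left adjoint, need not preserve injective objects—this is the structural reason (2) yields only an inequality.
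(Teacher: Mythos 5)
Your proof is correct and follows essentially the same route as the paper's. The paper establishes the slightly stronger isomorphism $\phi^*(\phi_!(M)) \cong \bigoplus_{l\in\ker(\phi)} M[l]$ in $G\GrMod A$ and reads off $G\pd\phi^*\phi_!(M) = G\pd M$ from it, whereas you use only the fact that $M$ splits off via the adjunction unit; both yield $G\pd M \le G\pd\phi^*\phi_!(M)$, which is all that is needed. The remaining ingredients — exactness of $\phi_!$ and $\phi^*$, preservation of projectives by both, preservation of injectives by $\phi^*$, and the chain of inequalities that closes the argument — are identical in the two proofs, and your $A$-linearity check for the projection $\pi_M$ is the same computation that underlies the paper's stated isomorphism. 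Your closing remark that (2) is only an inequality because $\phi_!$, as a left adjoint, need not preserve injectives is exactly the structural point, and Remark \ref{rmk-cebr} shows the inequality is strict in general.
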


\begin{proof} We point out that there is an isomorphism in $G\GrMod A$
\[
\phi^*(\phi_!(M)) \cong \bigoplus_{l\in\ker(\phi)} M[l], 
\]
from which it follows that $G\pd \phi^*(\phi_!(M)) = \sup \{G\pd M[l] \mid l \in \ker
\phi\}$; since suspension functors are auto-equivalences, they preserve projective
dimensions, so this supremum is equal to $G\pd M$. For the second inequality, suppose to
the contrary that $G\injdim M > H\injdim \phi_!M$. By Corollary \ref{proj-inj-functors} it
follows that $G\injdim \phi^*(\phi_!(M)) \le H\injdim \phi_!(M) < G\injdim M$, which can
not happen since $M$ is a direct summand of $\phi^*(\phi_!(M))$ in $G\GrMod A$.
\end{proof}

\begin{subcorollary} -- \label{dim-and-*} 
Let $M$ be an object of $G\GrMod A$. 
\begin{enumerate}
\item $G\injdim M  = H\injdim \phi_*(M)$;
\item $G\pd M \le H\pd\phi_*(M)$.
\end{enumerate}
\end{subcorollary}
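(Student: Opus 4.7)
The plan is to mimic the proof of Corollary \ref{dim-and-!}, dualizing direct sums to direct products. The key structural observation will be that, for any $M$ in $G\GrMod A$, there is a natural isomorphism in $G\GrMod A$
\[
\phi^*(\phi_*(M)) \cong \prod_{l \in \ker(\phi)} M[l],
\]
obtained by tracing through the definitions: $\phi^*(\phi_*(M))_g = \phi_*(M)_{\phi(g)} = \prod_{g' \in \phi^{-1}(\phi(g))} M_{g'} = \prod_{l \in \ker\phi} M_{g+l}$. Unlike the direct sum in Corollary \ref{dim-and-!}, each factor here still admits both a projection and a section (the inclusion as the $l = 0$ component), so $M$ is still a direct summand of $\phi^*\phi_*(M)$ in $G\GrMod A$.

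For part (1), the inequality $H\injdim \phi_*(M) \le G\injdim M$ is immediate: since $\phi_*$ is exact (Remark \ref{faithful-exact}) and sends injectives to injectives (Corollary \ref{proj-inj-functors}), applying $\phi_*$ to an injective resolution of $M$ produces an injective resolution of $\phi_*(M)$ of the same length. For the reverse inequality, I would argue that $G\injdim \prod_{l \in \ker\phi} M[l] = G\injdim M$: any injective resolution of $M$ gives, via suspensions and products (which are exact in $G\GrMod A$ since exactness is checked componentwise, and since products of injectives are injective), an injective resolution of $\prod_l M[l]$ of the same length, while the reverse inequality follows because $M$ is a direct summand. Combined with $G\injdim \phi^*\phi_*(M) \le H\injdim \phi_*(M)$ from Corollary \ref{proj-inj-functors}(iii), this yields $G\injdim M \le H\injdim \phi_*(M)$.

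For part (2), I would simply chain two inequalities: first, $\phi^*$ preserves projectives (Corollary \ref{proj-inj-functors}(iii)) and is exact, so applying it to a projective resolution of $\phi_*(M)$ gives a projective resolution of $\phi^*\phi_*(M)$, whence $G\pd \phi^*\phi_*(M) \le H\pd \phi_*(M)$; second, $M$ is a direct summand of $\phi^*\phi_*(M)$, hence $G\pd M \le G\pd \phi^*\phi_*(M)$ by the standard fact that $\Ext^i(M,-)$ is a direct summand of $\Ext^i(\phi^*\phi_*(M),-)$.

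The main subtle point, compared to the proof of Corollary \ref{dim-and-!}, is verifying that $\prod_l M[l]$ has the same injective dimension as $M$; unlike the direct sum case, where this was visible at the level of resolutions in a straightforward way, here I need exactness of arbitrary products in $G\GrMod A$ plus the fact that products of injectives are injective. Both are standard but worth checking, and once that is in place the rest of the argument is exactly the dual of the previous corollary.
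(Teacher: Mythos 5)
Your proof is correct and follows exactly the approach the paper intends: the paper records the isomorphism $\phi^*(\phi_*(M)) \cong \prod_{l\in\ker\phi} M[l]$ and then simply says the argument mirrors that of Corollary \ref{dim-and-!}. You have filled in the (correct) dual details — exactness of products in $G\GrMod A$ because they are computed degreewise in $\k$-vector spaces, products of injectives are injective, and $M$ is a direct summand of $\phi^*\phi_*(M)$ via the insertion into the $l=0$ factor — but the structure of the argument is identical; note that for the direction $G\injdim M \le H\injdim\phi_*(M)$ you only actually need the direct-summand inequality $G\injdim M \le G\injdim\phi^*\phi_*(M)$, not the full equality $G\injdim\prod_l M[l]=G\injdim M$ that you prove along the way.
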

\begin{proof}
For every $G$-graded $A$-module $M$ there is an isomorphism 
\[
\phi^*(\phi_* (M)) \cong \prod_{l\in\ker(\phi)} M[l].
\]
The proof follows the same pattern as that of Corollary \ref{dim-and-!}.
\end{proof}

Recalling from Remark \ref{faithful-exact} that $\phi_!(M) = \phi_*(M)$ if and only if
$M$ is $\phi$-finite, we can sum up our results in the following table:
\begin{table*}[h]
\begin{centering}
\begin{tabular}{|c|c|c|c|}
\hline
{} 
	& $\phi_!$
	& $\phi_*$
	& $\phi^*$ \\
\hline
$\pd$
	& is preserved
	& does not decrease 
	& does not increase \\
{}
	& {}
	& (preserved for $\phi$-finite modules)
	& {} \\\hline
$\injdim$
	& does not decrease
	& is preserved
	& does not increase \\
{}
	& (preserved for $\phi$-finite modules)
	& {} 
	& {}\\
\hline
\end{tabular} 
\end{centering}
\label{phi-finite}
\caption{Homological dimensions and functors associated to $\phi$.}
\end{table*}

\begin{subremark} -- \label{rmk-cebr} \rm
We point out that these inequalities are sharp. Consider $A = \k[x,x^{-1}]$ with the
obvious $\Z$-grading and let $\phi: \Z \to \{0\}$ be the trivial morphism. The category
$\Z \GrMod A$ is semisimple, and hence all its objects are projective and injective.
However $\phi_!(A)$ is not an injective $A$-module since it is not divisible as
$A$-module. Since $\phi^*(\phi_!(A))$ is injective, in this case $\phi_!$ increases
injective dimension and $\phi^*$ decreases it. Also $\phi_*(A)$ is not projective since
the element $(x^n)_{n \in \Z}$ is annihilated by $x-1 \in A$, but $\phi^*(\phi_*(A))$ is
projective, so $\phi_*$ increases projective dimension and $\phi^*$ decreases it in this
case.
\end{subremark}

We now study the relationship between the functors $\phi_!$, $\phi^*$ and both extension
and local cohomology functors. We denote by $G\grmod A$, resp. $H\grmod A$, the full
subcategory of $G\GrMod A $, resp. $H\GrMod A $, whose objects are finitely generated. 
Let us fix a $G$-homogeneous ideal $\a$ of $A$. The ideal $\a$ is also $H$-homogeneous, so
we have the two functors $\Gamma_{\a,G}$ and $\Gamma_{\a,H}$ as well as their respective
right derived functors. In order to study the relation between these derived functors we
need the following result.

\begin{subproposition} -- \label{extension-and-!-and-*} 
Suppose $A$ is left noetherian and let $M$ be an object of $G\GrMod A $. Then, for all
$i\in\N$, there are natural isomorphisms
\[
\phi_! \circ \EXT^i_{G\GrMod A }(-,M) \cong \EXT^i_{H\GrMod A }(-,\phi_!(M)) \circ \phi_!,
\]
as functors from $G\grmod(A)$ to $H\GrMod(\k)$.
\end{subproposition}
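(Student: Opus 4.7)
The plan is to first establish the natural isomorphism for $i=0$, namely
\[
\phi_!\bigl(\HOM_{G\GrMod A}(N,M)\bigr) \cong \HOM_{H\GrMod A}\bigl(\phi_!(N),\phi_!(M)\bigr),
\]
natural in $N \in G\grmod A$, and then bootstrap to higher $i$ via a finitely generated projective resolution, exploiting the exactness of $\phi_!$ and its preservation of projectives.

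For the $i=0$ statement, the key observation is that by Lemma \ref{HOM-Hom}, since $N$ is finitely generated (and hence so is $\phi_!(N)$, whose underlying $A$-module is the same), both $\HOM_{G\GrMod A}(N,M)$ and $\HOM_{H\GrMod A}(\phi_!(N),\phi_!(M))$ coincide, as $\k$-vector spaces, with the ungraded $\Hom_A(N,M)$; only the gradings differ. The natural candidate sends a $G$-homogeneous morphism of degree $g$ to itself viewed as an $H$-homogeneous morphism of degree $\phi(g)$; this is visibly injective on the degree-$h$ component $\bigoplus_{g\in\phi^{-1}(h)}\Hom_{G\GrMod A}(N,M[g])$. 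The real content is surjectivity: given an $H$-homogeneous map $f:\phi_!(N)\to\phi_!(M)[h]$, one decomposes $f=\sum_g f_g$ into $G$-homogeneous components (a \emph{finite} sum thanks to Lemma \ref{HOM-Hom}), then uses the direct sum decomposition $\phi_!(M)_{\bar h}=\bigoplus_{g''\in\phi^{-1}(\bar h)}M_{g''}$ to conclude that $f_g=0$ unless $\phi(g)=h$.

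For general $i$, the left noetherian hypothesis on $A$ together with the finite generation of $N$ ensures the existence of a resolution $P_\bullet\to N$ by finitely generated $G$-graded projective modules. Since $\phi_!$ is exact (Remark \ref{faithful-exact}) and preserves projectives (Corollary \ref{proj-inj-functors}), the complex $\phi_!(P_\bullet)\to\phi_!(N)$ is a projective resolution in $H\GrMod A$. Applying the $i=0$ isomorphism to each $P_n$ yields an isomorphism of cochain complexes $\phi_!\HOM_{G\GrMod A}(P_\bullet,M)\cong\HOM_{H\GrMod A}(\phi_!(P_\bullet),\phi_!(M))$, and exactness of $\phi_!$ means it commutes with cohomology. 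Putting everything together,
\[
\phi_!\EXT^i_{G\GrMod A}(N,M) \;\cong\; H^i\bigl(\HOM_{H\GrMod A}(\phi_!(P_\bullet),\phi_!(M))\bigr) \;=\; \EXT^i_{H\GrMod A}(\phi_!(N),\phi_!(M)),
\]
and naturality in $N$ is inherited from the naturality of each constituent isomorphism.

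The main obstacle is the surjectivity step in the $i=0$ case: an $A$-linear map that respects only the $H$-grading need not respect the finer $G$-grading, so one must leverage the finite generation of $N$ to decompose such a map into finitely many $G$-homogeneous pieces, ensuring the result lies in the direct sum $\phi_!(\HOM_{G\GrMod A}(N,M))_h$ rather than in the larger direct product $\phi_*(\HOM_{G\GrMod A}(N,M))_h$. This is precisely where the hypothesis $N \in G\grmod A$, and through it Lemma \ref{HOM-Hom}, plays its essential role.
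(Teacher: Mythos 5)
Your proof is correct and takes essentially the same approach as the paper: the $i=0$ identification via Lemma \ref{HOM-Hom}, followed by a bootstrap to higher degrees using that $\phi_!$ is exact and preserves projectives. The paper packages the bootstrap step abstractly as a universal $\partial$-functor argument rather than unwinding an explicit finitely generated projective resolution, but these are the same argument in different dress, and your expansion of the surjectivity step in the base case merely makes explicit what the paper condenses into a single sentence.
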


\begin{proof} Notice that, since $A$ is noetherian, the category $G\grmod(A)$ has enough
projectives. From lemma \ref{HOM-Hom} we see that
\[
\phi_! \circ \HOM_{G\GrMod A }(-,M) = \HOM_{H\GrMod A }(-,\phi_!(M)) \circ \phi_! .
\]
Since $\phi_!$ is exact and preserves projectives, the families of functors $(\phi_! \circ
\EXT^i_{G\GrMod A }(-,M))_{i \geq 0}$ and  $(\EXT^i_{H\GrMod A }(-,\phi_!(M)) \circ
\phi_!)_{i \geq 0}$ form universal contravariant homological $\partial$-functors, so the
equality extends to the desired isomorphisms by standard homological algebra. \end{proof}

We can now prove the following result.
\begin{subproposition} -- \label{cl-and-!-and-*} 
Fix $i\in\N$.
\begin{enumerate}
\item Suppose $A$ is left noetherian. There are natural isomorphisms of functors as
	follows:
\[
\phi_! \circ H^i_{\a,G} \cong H^i_{\a,H} \circ \phi_!. 
\]
\item There are natural isomorphisms of functors as follows:
\[
\phi^* \circ H^i_{\a,H} \cong H^i_{\a,G} \circ \phi^*. 
\]
\end{enumerate}
\end{subproposition}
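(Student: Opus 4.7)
The plan is to treat the two parts separately. Both rely on standard universal $\partial$-functor machinery, but for part (i) we must go through the Ext description of local cohomology, because $\phi_!$ is a left adjoint and there is no reason for it to preserve injectives, whereas for part (ii) we can argue directly on $\Gamma_{\a, G}$ and $\Gamma_{\a, H}$ because $\phi^*$ is well-behaved on both sides.

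For part (i), I would start from the colimit formula
\[
H^i_{\a, G}(M) \cong \colim \EXT^i_{G\GrMod A}(A/\a^n, M)
\]
recalled after Definition \ref{lc-def}. The noetherian hypothesis ensures that $A/\a^n \in G\grmod A$, so Proposition \ref{extension-and-!-and-*} gives natural isomorphisms
\[
\phi_! \EXT^i_{G\GrMod A}(A/\a^n, M) \cong \EXT^i_{H\GrMod A}(\phi_!(A/\a^n), \phi_!(M)).
\]
Since $\phi_!$ only regrades the underlying module, $\phi_!(A/\a^n)$ coincides with $A/\a^n$ as an object of $H\GrMod A$. Applying $\phi_!$ to the colimit formula, using that $\phi_!$ is a left adjoint (Proposition \ref{true-adjoints}) and hence commutes with filtered colimits, and then substituting the above isomorphism, produces $H^i_{\a, H}(\phi_!(M))$, as required.

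For part (ii), I would argue directly by universality of derived functors. First verify the degree-$0$ equality $\phi^* \circ \Gamma_{\a, H} = \Gamma_{\a, G} \circ \phi^*$: by definition $\phi^*(N)_g = N_{\phi(g)}$ with the original $A$-action, so the condition ``annihilated by $\a^n$ for some $n$'' cuts out the same $\k$-subspace in each homogeneous component, and the two gradings of the resulting submodule match by construction. By Remark \ref{faithful-exact} the functor $\phi^*$ is exact, and by Corollary \ref{proj-inj-functors}(iii) it preserves injectives; consequently both families $(\phi^* \circ H^i_{\a, H})_{i\geq 0}$ and $(H^i_{\a, G} \circ \phi^*)_{i\geq 0}$ are universal cohomological $\partial$-functors extending the same left exact functor in degree $0$. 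Standard uniqueness then yields a natural isomorphism in every degree.

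The main delicate point is in part (i): one must check that the isomorphism furnished by Proposition \ref{extension-and-!-and-*} is not only natural in its contravariant argument but also in $M$, and that it is compatible with the surjections $A/\a^{n+1} \twoheadrightarrow A/\a^n$ defining the direct system. Both facts follow from the bifunctorial character of Ext and the construction of the isomorphism from the equality of $\HOM$ bifunctors implicit in the proof of \ref{extension-and-!-and-*}, but they are what makes the colim interchange legitimate. Once they are in place, the rest is routine.
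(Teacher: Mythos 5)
Your argument is correct, and the two items are handled by genuinely different routes, at least for item (i).

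For item (i) the paper does not push $\phi_!$ through the colimit formula as you do. Instead it runs a uniqueness-of-$\partial$-functors argument from scratch: since $\phi_!$ is exact, $(\phi_! \circ H^i_{\a,G})_i$ and $(H^i_{\a,H}\circ \phi_!)_i$ are both cohomological $\partial$-functors agreeing in degree $0$; the first is effaceable because $\phi_!$ is applied after a derived functor that kills injectives, and the effaceability of the second is checked \emph{pointwise on injectives} by computing $H^i_{\a,H}(\phi_!(I)) \cong \colim \EXT^i_{H\GrMod A}(A/\a^n,\phi_!(I))$ and invoking Proposition \ref{extension-and-!-and-*} to see this vanishes. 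Your proof avoids the universality/effaceability bookkeeping for the composed family entirely: you apply $\phi_!$ directly to the formula $H^i_{\a,G}(M)\cong\colim\EXT^i_{G\GrMod A}(A/\a^n,M)$, pull $\phi_!$ through the filtered colimit using that it is a left adjoint, and finish with Proposition \ref{extension-and-!-and-*} applied termwise. This is shorter and more transparent, but it leans on the two naturality points you correctly flag (naturality in $M$ and compatibility with the transition maps $A/\a^{n+1}\twoheadrightarrow A/\a^n$), which do hold because the isomorphism of Proposition \ref{extension-and-!-and-*} is built from the honest equality of $\HOM$-bifunctors. The paper's approach, by contrast, only invokes \ref{extension-and-!-and-*} at injective objects and so never needs that compatibility; this is probably why the authors chose it. For item (ii) your argument coincides with the paper's: degree-$0$ identity $\phi^*\circ\Gamma_{\a,H} = \Gamma_{\a,G}\circ\phi^*$, exactness of $\phi^*$, preservation of injectives via Corollary \ref{proj-inj-functors}, then uniqueness of universal $\partial$-functors.
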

\begin{proof}
Since $\phi_!$ is an exact functor the families $(\phi_! \circ H^i_{\a,G})_i$ and
$(H^i_{\a,H} \circ \phi_!)_i$ form cohomological $\partial$-functors. Clearly each functor
in the first family annihilates injective objects, so this is a universal
$\partial$-functor. If $I$ is an injective object in $G\GrMod A$ then
\[
H^i_{\a,H} \circ \phi_!(I) \cong  \varinjlim \EXT_{H\GrMod A}^i(A/\a^n,\phi_!(I)),
\]
and using Proposition \ref{extension-and-!-and-*} we conclude that $\EXT_{H\GrMod
A}^i(A/\a^n,\phi_!(I)) \cong \EXT_{G\GrMod A}^i(A/\a^n,I) = 0$. As a consequence,
$(H^i_{\a,H} \circ \phi_!)_i$ is also a universal $\partial$-functor. Since $\phi_! \circ
\Gamma_{\a,H} = \Gamma_{\a,G} \circ \phi_!$, standard homological algebra implies the
existence of the isomorphisms of the first item. The second item is proved
in the same way, using the fact that $\phi^*$ preserves injectives, see Corollary
\ref{proj-inj-functors}. \end{proof}

Recall that the local cohomological dimension relative to $G$ and $\a$, denote by
$\lcd_{\a,G} A$, is the cohomological dimension of the functor $\GG_{\a, G}$.
\begin{subcorollary} -- 
\label{lcd-group-change}
If $A$ is left noetherian, then $\lcd_{\a,G} A =\lcd_{\a,H} A$.
\end{subcorollary}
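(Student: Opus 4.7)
The plan is to establish both inequalities $\lcd_{\a,G} A \le \lcd_{\a,H} A$ and $\lcd_{\a,H} A \le \lcd_{\a,G} A$ using the two natural isomorphisms provided by Proposition \ref{cl-and-!-and-*}. Each direction uses one of the two change-of-grading functors applied to the appropriate local cohomology isomorphism, together with a faithfulness-type argument allowing us to conclude vanishing.

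For the inequality $\lcd_{\a,G} A \le \lcd_{\a,H} A$, I would set $d = \lcd_{\a,H} A$ (assumed finite; otherwise the claim is vacuous). Given $M$ in $G\GrMod A$ and $i>d$, Proposition \ref{cl-and-!-and-*}(1) furnishes $\phi_!(H^i_{\a,G}(M)) \cong H^i_{\a,H}(\phi_!(M))$, and the right-hand side vanishes since $\phi_!(M)$ lies in $H\GrMod A$ and $i>d$. Since $\phi_!$ reflects the zero object by Remark \ref{faithful-exact}(3), this forces $H^i_{\a,G}(M) = 0$, hence $\lcd_{\a,G} A \le d$. This direction is the one where the noetherian hypothesis enters, via Proposition \ref{cl-and-!-and-*}(1).

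For the reverse inequality, set $d = \lcd_{\a,G} A$, fix $N$ in $H\GrMod A$ and $i>d$. Proposition \ref{cl-and-!-and-*}(2) gives $\phi^*(H^i_{\a,H}(N)) \cong H^i_{\a,G}(\phi^*(N)) = 0$. The hard part will be extracting vanishing of $H^i_{\a,H}(N)$ itself, because $\phi^*$ only reflects the zero object when $\phi$ is surjective (Remark \ref{faithful-exact}(3)); without surjectivity, $\phi^*(X)=0$ only tells us $X_h = 0$ for $h \in \phi(G)$. To circumvent this I would apply the same isomorphism to every suspension $N[h_0]$ with $h_0 \in H$, and then use the analogue of Remark \ref{lc-and-suspension} for the $H$-grading to rewrite $H^i_{\a,H}(N[h_0])$ as $H^i_{\a,H}(N)[h_0]$. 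The $G$-degree $g$ component of $\phi^*(H^i_{\a,H}(N)[h_0])$ is the $H$-degree $\phi(g)+h_0$ component of $H^i_{\a,H}(N)$, all of which must vanish. As $g$ runs over $G$ and $h_0$ runs over a set of representatives of $H/\phi(G)$, the degrees $\phi(g)+h_0$ exhaust $H$, whence $H^i_{\a,H}(N)=0$ and $\lcd_{\a,H} A \le d$. Combining the two inequalities yields the equality.
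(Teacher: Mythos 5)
Your proposal is correct and follows essentially the same strategy as the paper: both directions rely on the two items of Proposition \ref{cl-and-!-and-*}, with $\phi_!$ handling the inequality requiring the noetherian hypothesis (since $\phi_!$ reflects zero), and $\phi^*$ plus the suspension-compatibility of local cohomology handling the other. The only cosmetic difference is how the non-surjectivity of $\phi$ is dispatched: the paper suspends once to bring a nonvanishing component into degree $0_H$ and then evaluates $\phi^*$ at $0_G$ (using $\phi(0_G)=0_H$), whereas you range over all suspensions $N[h_0]$ and all $G$-degrees to exhaust every $H$-degree; both devices amount to the same observation.
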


\begin{proof}
Let $i$ be an integer such that $H_{\a,H} ^i \neq 0$, i.e. there is an object
$M$ of $H\GrMod A $ such that $H_{\a,H}^i(M) \neq 0$. Since local cohomology commutes
with suspension functors, see Remark \ref{lc-and-suspension}, we may assume that the
component of degree $0_H$ of $H_{\a,H}^i(M)$ is non-zero. By the second item of
Proposition \ref{cl-and-!-and-*}, $H_{\a,G}^i(\phi^*(M))_{0_G} = H_{\a,H}^i(M)_{0_H} \neq
0$, so $\lcd_{\a,G} A \geq \lcd_{\a,H} A$. Since $A$ is noetherian we use the first item
of Proposition \ref{cl-and-!-and-*} and the fact that $\phi_!$ sends nonzero modules to
nonzero modules in an analogous fashion to get $\lcd_{\a,G} A  \leq \lcd_{\a,H} A$.  
\end{proof}

\section{Regularity of $\Z^{r+1}$-graded algebras and their Zhang
twists.}\label{section-regularity-grading-twist}

Throughout this section $A$ denotes a noetherian connected $\Z^{r+1}$-graded $\k$-algebra
for some $r\in\N$. In this context connected means that $\supp(A) \subseteq \N^{r+1}$ and
$A_{0} = \k$. We denote by $\m$ the ideal generated by all homogeneous elements of nonzero
degree, which is clearly the unique maximal graded ideal of $A$. Since $\m$ is a graded
ideal, we can consider the local cohomology functors $H_{\m, \Z^{r+1}}^i$ as defined in
section \ref{section-prelim}.

Let $\phi: \Z^{r+1} \to \Z$ be the group morphism $(a_0, \ldots, a_r) \mapsto a_0 + \ldots
+ a_r$ for all $(a_0, \ldots, a_r) \in\Z^{r+1}$. We are in position to apply the procedure
described in subsection \ref{ss-change-grading-group} and view $A$ as a $\Z$-graded
$\k$-algebra with the grading induced by $\phi$; notice that $A$ is also connected
with this grading. We can now consider the categories of $\Z^{r+1}$-graded and $\Z$-graded
$A$-modules.

Various noncommutative analogues of regularity conditions for algebraic varieties have been
introduced in the study of connected $\N$-graded algebras, for example the
AS-Cohen-Macaulay, AS-Gorenstein, AS-regular properties, or having dualizing complexes, to
name a few. We are interested in the following question: given a twisting system $\tau$ on
$A$ over $\Z^{r+1}$, is it true that $A$ and ${}^\tau A$ have the same regularity
properties when seen as $\N$-graded algebras? In order to answer this question we prove
that these properties, which a priori are read from the category of $\Z$-graded modules,
can also be read from the category of $\Z^{r+1}$-graded modules.

The section is organized as follows. In the first subsection we consider $\Z^{r+1}$-graded
analogues of the AS-Cohen-Macaulay, AS-Gorenstein and AS-regular properties, and show that
they are stable under change of grading and twisting. In the second subsection we recall
the definition of a balanced dualizing complex, and show that the property of having a
balanced dualizing complex is also invariant under twisting.

\subsection{Homological regularity conditions.} \label{hrc}
We start by introducing $\Z^{r+1}$-graded analogues of the AS-Cohen-Macaulay,
AS-Gorenstein and AS-regular conditions for connected $\N$-graded algebras. These
conditions can be found for example in the introduction to [JZ].
\begin{subdefinition} -- \label{def-reg-cond}
We keep the notation from the introduction to this section.
\begin{enumerate}
\item $A$ is called \emph{$\Z^{r+1}$-AS-Cohen-Macaulay} if there exists $n \in \N$
such that $H^i_{\m, \Z^{r+1}}(A) = 0$ and $H_{\m^\opp, \Z^{r+1}}^i(A) = 0$ for all $i \neq
n$.
\item $A$ is called \emph{left $\Z^{r+1}$-AS-Gorenstein} if it has finite left graded
injective dimension $n$ and there exists $\ell \in \Z^{r+1}$, called the \emph{Gorenstein
shift} of $A$, such that
\[
\EXT_{\Z^{r+1}\GrMod A}^i(\k,A) \cong 
\begin{cases} \k[\ell] & \mbox{ for } i = n \\ 0 & \mbox{ for } i \neq n\end{cases}
\]
as $\Z^{r+1}$-graded $A^\opp$-modules. We say $A$ is \emph{right} $\Z^{r+1}$-AS-Gorenstein
if $A^\opp$ is left $\Z^{r+1}$-AS-Gorenstein. Finally $A$ is
\emph{$\Z^{r+1}$-AS-Gorenstein} if both $A$ and $A^\opp$ are left
$\Z^{r+1}$-AS-Gorenstein,
with the same injective dimensions and Gorenstein shifts.

\item $A$ is called \emph{$\Z^{r+1}$-AS-regular} if it is
$\Z^{r+1}$-AS-Gorenstein, it has finite graded global dimension both on the left and on
the right, and both dimensions coincide.
\end{enumerate}
\end{subdefinition}
The usual notions of AS-Cohen-Macaulay, AS-Gorenstein and AS-regular $\N$-graded algebras
correspond to the case $r = 0$.
We will soon see that one may omit the $\Z^{r+1}$ of the definitions, since they are
stable by re-grading through a group morphism. For a precise statement see Theorem
\ref{AS-regrading}.

\begin{subremark} -- \rm
Let $B$ be a commutative noetherian connected $\N$-graded $\k$-algebra, with maximal
homogeneous ideal $\n$. The algebra $B$ is Cohen-Macaulay, resp. Gorenstein, in the
classical sense if and only if $B$ is AS-Cohen-Macaulay, resp. AS-Gorenstein, see [LR1;
Remark 2.1.10]. The same result holds for the AS-regular property, as we now show.

First suppose $B$ is regular in the classical sense [Mat, p. 157]. Then by [BH; ex.
2.2.24] it is regular if and only if $B_\m$ is local regular, and this implies that $B$
is a polynomial ring [Mat; ex. 19.1], which is easily seen to be AS-regular. On the other
hand if $B$ is AS-regular then by [Lev; 3.3] it has finite ungraded global dimension, so
it is regular by the Auslander-Buchsbaum-Serre theorem.
\end{subremark}

\begin{subremark} -- \rm
\label{one-dim}
If $M$ is an object of $\Z^{r+1}\GrMod A$ with $\dim_\k M = 1$ then it is clear that it is
isomorphic to $\k[\ell]$ for some $\ell \in \Z^{r+1}$ as an object of $\Z^{r+1}\GrMod A$.
Thus to prove that an algebra is left or right AS-Gorenstein it is enough to check that it
has finite graded injective dimension $n$ and that $\dim_\k \EXT_{\Z^{r+1}\GrMod A}^i(\k,
A) = \delta_{i,n}$.
\end{subremark}

The following two lemmas will be used in the proof of the invariance of the regularity
properties by change of grading.
\begin{sublemma} 
\label{spe-seq-and-csq}
If $A$ is both left and right $\Z^{r+1}$-AS-Gorenstein, it is $\Z^{r+1}$-AS-Gorenstein. 
\end{sublemma}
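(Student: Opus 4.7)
The plan is to construct a minimal $\Z^{r+1}$-graded free resolution of the trivial module $\k$ as a left $A$-module and dualize it twice, extracting the required identifications at each stage. Write $(n,\ell)$ for the injective dimension and Gorenstein shift of $A$ as a left module, and $(n',\ell')$ for those of $A^{\opp}$; the aim is to prove $n = n'$ and $\ell = \ell'$. Since $A$ is noetherian and connected $\Z^{r+1}$-graded, $\k$ admits a minimal $\Z^{r+1}$-graded free resolution $P_\bullet \to \k$ in $\Z^{r+1}\grmod A$, and the left AS-Gorenstein hypothesis forces $P_i = 0$ for $i > n$.

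First I would dualize once: applying $\Hom_A(-,A)$ produces a complex $\Hom_A(P_\bullet, A)$ of graded free right $A$-modules of length at most $n$, whose cohomology is $\EXT^*_A(\k,A)$, so by hypothesis it equals $\k[\ell]$ concentrated in degree $n$ and vanishes elsewhere. Re-indexing via $Q_i := \Hom_A(P_{n-i}, A)$ yields a graded free resolution $Q_\bullet \to \k[\ell]$ of the right $A$-module $\k[\ell]$ of length $n$; minimality is preserved because the matrices of the differentials of $P_\bullet$ have entries in the maximal graded ideal $\m$, and this property is stable under transpose.

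Next I would dualize again: applying $\Hom_{A^{\opp}}(-,A)$ to $Q_\bullet$ computes $\EXT^*_{A^{\opp}}(\k[\ell], A)$. The double-dual identity for graded free modules $\Hom_{A^{\opp}}(\Hom_A(A[d], A), A) \cong A[d]$ extends to an isomorphism of complexes $\Hom_{A^{\opp}}(Q_\bullet, A) \cong P_{n-\bullet}$, whose cohomology is $\k$ in degree $n$ and $0$ elsewhere. Combining with the shift identity $\EXT^i_{A^{\opp}}(\k[\ell], A) \cong \EXT^i_{A^{\opp}}(\k, A)[-\ell]$ yields $\EXT^i_{A^{\opp}}(\k, A) = \delta_{i,n}\, \k[\ell]$, and comparison with the right AS-Gorenstein datum $\EXT^i_{A^{\opp}}(\k, A) = \delta_{i,n'}\, \k[\ell']$ forces $n = n'$ and $\ell = \ell'$. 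The main thing to keep straight is the bookkeeping of $\Z^{r+1}$-graded shifts through both dualizations; minimality of the resolution is essential so that the double dual faithfully reproduces $P_\bullet$ rather than introducing contractible summands that could conceal the final identification.
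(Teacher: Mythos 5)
The proof breaks down at its very first step, where it is claimed that ``the left AS-Gorenstein hypothesis forces $P_i = 0$ for $i > n$.'' Left $\Z^{r+1}$-AS-Gorenstein means that $A$ has finite \emph{injective} dimension $n$ over itself and that $\EXT^i_A(\k,A)$ is one-dimensional in degree $n$ and zero otherwise; it does \emph{not} bound the projective dimension of $\k$. Indeed, by Lemma~\ref{gl-dim} the projective dimension of $\k$ equals the graded global dimension of $A$, so $\pd_A\k<\infty$ would already put $A$ in the much smaller AS-regular class. Concretely, $A=\k[x]/(x^2)$ with $\deg x=1$ is noetherian, connected, and AS-Gorenstein of graded injective dimension $0$ (it is a graded Frobenius algebra, with $A^*\cong A[1]$), yet the minimal graded free resolution
\[
\cdots\xrightarrow{\,x\,}A[-2]\xrightarrow{\,x\,}A[-1]\xrightarrow{\,x\,}A\longrightarrow\k\longrightarrow 0
\]
never terminates. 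With $P_\bullet$ unbounded, $\Hom_A(P_\bullet,A)$ is a cochain complex of infinite length; the soft truncation that isolates the single nonzero cohomology $\k[\ell]$ in degree $n$ has $\ker(d^n)$ as its rightmost term, which need not be free, so it is not a free resolution of $\k[\ell]$ and cannot be dualized again as you propose. Both conclusions $n=n'$ and $\ell=\ell'$ therefore rest on an unavailable hypothesis.

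The paper avoids this in two separate steps. The equality $n=n'$ is obtained by first transferring to ungraded injective dimension (Corollary~\ref{dim-and-*} plus Levasseur's lemma) and then invoking Zaks's result that a two-sided noetherian ring with finite injective dimension on both sides has equal left and right injective dimensions; no free resolution of $\k$ is needed. The equality $\ell=\ell'$ then comes from the $\Z^{r+1}$-graded Ischebeck spectral sequence
\[
E_2^{p,q}=\EXT_{A^\opp}^p\bigl(\EXT_A^{-q}(\k,A),A\bigr)\Rightarrow\mathbb{H}^{p+q}(\k),
\]
which is precisely the derived-category version of the double dualization you are attempting. It converges because the left and right injective dimensions are finite (bounding $p$ and $-q$), not because $\pd_A\k$ is finite; the AS-Gorenstein hypotheses collapse $E_2$ to the single entry $\k[r-\ell]$ at $(d,-d)$, and matching against the abutment $\k$ forces $r=\ell$. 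If you want to keep the spirit of your argument, replace the naive dualization of a bounded free resolution with this spectral sequence (or, equivalently, compute $\R\Hom_{A^\opp}(\R\Hom_A(\k,A),A)\simeq\k$ in the derived category); that is what makes the bookkeeping of shifts legitimate without the false finiteness assumption.
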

\begin{proof}
Let $\phi: \Z^{r+1} \to \Z$ be the morphism that sends an $r+1$-uple to the sum of its
coordinates. By Corollary \ref{dim-and-*} the $\Z^{r+1}$-graded injective dimension of $A$
is equal to that of $\phi_!(A)$ and by [Lev; Lemma 3.3] this is equal to the injective
dimension of $A$ over itself; thus the graded and ungraded injective dimensions of $A$
coincide. By [Zak; Lemma A], it is also equal to the injective dimension of $A^\opp$, so
the graded injective dimensions of $A$ and $A^\opp$ coincide. All that is left to do is to
prove that the left and right Gorenstein shifts of $A$ are equal.

Let $\ell$ and $r$ be the left and right Gorenstein shifts of $A$ respectively, and let $d$
be its left or right injective dimension. Using the $\Z^{r+1}$-graded analogue of the
Ischebeck spectral sequence first introduced in [Isch, Theorem 1.8], we obtain a spectral
sequence in $\Z^{r+1}\GrMod A$:
\[
E_2^{p,q}: \EXT_{A^\opp}^p(\EXT_A^{-q}(\k, A), A) \Rightarrow \mathbb H^{p+q}(\k) =
\begin{cases} \k \mbox{ if } p + q = 0 \\ 0 \mbox{ otherwise.}\end{cases}
\]
By hypothesis, page two of this spectral sequence has $\k[r - \ell]$ in position $(d,-d)$
and zero elsewhere, so $r$ and $\ell$ must be equal.
\end{proof}

\begin{sublemma} -- \label{gl-dim}
The global dimension of $\Z^{r+1}\GrMod A$ is equal to the projective dimension of the
trivial module $\k$ seen as an object of this category.
\end{sublemma}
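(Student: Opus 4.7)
The inequality $\Z^{r+1}\pd\k \leq \gldim(\Z^{r+1}\GrMod A)$ is immediate since $\k$ is an object of $\Z^{r+1}\GrMod A$. The plan for the reverse inequality, writing $d := \Z^{r+1}\pd\k$ (assumed finite, otherwise the statement is vacuous), is in two stages: first bound $\Z^{r+1}\pd M$ for every finitely generated graded $M$, then promote the bound to the whole category using noetherianity.

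For the bound on finitely generated modules, I would use that $A$ is connected $\Z^{r+1}$-graded and noetherian so that the graded Nakayama lemma, applied to the unique maximal graded ideal $\m$, produces for each $M \in \Z^{r+1}\grmod A$ a \emph{minimal} graded free resolution $P_\bullet \twoheadrightarrow M$, characterized by $d_i(P_i) \subseteq \m P_{i-1}$ for every $i \geq 1$. Tensoring with $\k$ on the right kills all differentials in such a resolution, hence $\mathrm{Tor}_i^A(\k,M) \cong P_i \otimes_A \k$ in $\Z^{r+1}\GrMod \k$ and
\[
\Z^{r+1}\pd M = \sup\{i \tq \mathrm{Tor}_i^A(\k,M) \neq 0\}.
\]
Recomputing these $\mathrm{Tor}$ groups from a graded projective resolution of $\k$ as a right $A$-module shows $\mathrm{Tor}_i^A(\k,M) = 0$ for $i > \Z^{r+1}\pd_{A^\opp}\k$; specializing $M = \k$ and combining with the symmetric argument applied to right modules yields $\Z^{r+1}\pd_A\k = \Z^{r+1}\pd_{A^\opp}\k = d$, and hence $\Z^{r+1}\pd M \leq d$ for every finitely generated graded $M$.

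To pass from finitely generated modules to the full category I would invoke the graded analogue of Auslander's theorem: since $A$ is noetherian, $\gldim(\Z^{r+1}\GrMod A) \leq d$ as soon as $\EXT^{d+1}_{\Z^{r+1}\GrMod A}(A/\a, -) = 0$ for every graded left ideal $\a$, which is guaranteed by the previous step since each $A/\a$ is finitely generated. Combining the three steps gives the desired inequality and hence the equality claimed.

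The main obstacle is the first stage: setting up minimal graded free resolutions in the $\Z^{r+1}$-graded connected context and identifying $\mathrm{Tor}_\bullet^A(\k,-)$ with $P_\bullet \otimes_A \k$. Once this homological bookkeeping is in place, the left/right comparison of $\pd\k$ and the noetherian reduction are essentially routine.
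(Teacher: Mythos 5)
Your proof is correct, but it takes a genuinely different route from the paper. The paper's proof is three lines: it applies Corollary \ref{dim-and-!} with $\phi:\Z^{r+1}\to\{0\}$ the trivial morphism to get both $\Z^{r+1}\pd\k=\pd\k$ and the inequality $\gldim(\Z^{r+1}\GrMod A)\le\gldim A$ (since $\phi_!$ preserves projective dimension of graded modules), and then simply cites [Ber; Th\'eor\`eme 3.3] for the ungraded equality $\gldim A=\pd_A\k$ over a connected graded algebra, closing the circle of inequalities. You instead work entirely inside the $\Z^{r+1}$-graded category: graded Nakayama, minimal graded free resolutions, the identification $\Z^{r+1}\pd M=\sup\{i\mid\mathrm{Tor}^A_i(\k,M)\neq 0\}$ for finitely generated $M$, balancing of $\mathrm{Tor}$ to equate left and right $\pd\k$, and finally the graded Auslander criterion to promote the bound from finitely generated modules to the whole category. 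In effect you re-prove (a multigraded version of) the Berger result that the paper imports as a black box. Your argument buys self-containedness and avoids the excursion through the ungraded category; the paper's argument is shorter and reuses the $\phi_!$ machinery already in place. One small remark on the write-up: your assertion that the statement is ``vacuous'' when $d=\infty$ is imprecise — rather, the equality holds trivially there because $\Z^{r+1}\pd\k\le\gldim$ forces both sides to be infinite; and your left/right comparison of $\pd\k$ deserves one extra sentence making explicit that it rests on the balance of $\mathrm{Tor}$ together with the minimal-resolution characterization applied on each side.
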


\begin{proof} 
By Corollary \ref{dim-and-!}, the ungraded global dimension of $A$ is an upper bound for
the graded global dimension of $A$. The same result implies $\pd \k = \Z^{r+1}\pd \k = n$,
which is obviously a lower bound. By [Ber; Th\'eor\`eme 3.3] these three numbers are
equal.
\end{proof}

We now prove the result announced at the beginning of this subsection. Recall that we
denote by $\phi: \Z^{r+1}\to \Z$ the morphism that sends an $r+1$-uple to the sum of its
coordinates. 
\begin{subtheorem} -- \label{AS-regrading}
Let $A$ be a connected $\Z^{r+1}$-graded algebra.
\begin{enumerate}
\item $A$ is $\Z^{r+1}$-AS-Cohen-Macaulay if and only if $\phi_!(A)$ is
	$\Z$-AS-Cohen-Macaulay.

\item $A$ is $\Z^{r+1}$-AS-Gorenstein if and only if $\phi_!(A)$ is $\Z$-AS-Gorenstein.

\item $A$ is $\Z^{r+1}$-AS-regular if and only if $\phi_!(A)$ is $\Z$-AS-regular.
\end{enumerate}
\end{subtheorem}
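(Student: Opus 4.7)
The plan is to transfer each regularity condition across the change-of-grading functor $\phi_!$ using the comparison results of Subsection \ref{ss-change-grading-group}. The crucial observation is that $A$ is $\phi$-finite: for each $h \in \Z$, the set $\supp A \cap \phi^{-1}(h)$ sits inside $\{(a_0,\ldots,a_r)\in \N^{r+1}\mid a_0+\cdots+a_r=h\}$, which is finite (and empty when $h<0$). Hence $\phi_!(A)=\phi_*(A)$, and combining Corollaries \ref{dim-and-!} and \ref{dim-and-*} (and their right-module analogues) one obtains $\Z^{r+1}\injdim A = \Z\injdim \phi_!(A)$ and the same equality for $A^\opp$. Since $\k$ is concentrated in a single degree, $\phi_!(\k)=\k$ and more generally $\phi_!(\k[\ell])=\k[\phi(\ell)]$ for every $\ell\in\Z^{r+1}$.

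For (1), Proposition \ref{cl-and-!-and-*}(1) furnishes natural isomorphisms $\phi_!\circ H^i_{\m,\Z^{r+1}}\cong H^i_{\m,\Z}\circ \phi_!$, and by Remark \ref{faithful-exact} the functor $\phi_!$ reflects zero, so $H^i_{\m,\Z^{r+1}}(A)=0$ if and only if $H^i_{\m,\Z}(\phi_!(A))=0$. Applying the same argument to $\m^\opp$ and the right-module version of $\phi_!$ yields the Cohen--Macaulay equivalence.

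For (2), Proposition \ref{extension-and-!-and-*} supplies natural isomorphisms
\[
\phi_!\bigl(\EXT^i_{\Z^{r+1}\GrMod A}(\k,A)\bigr) \cong \EXT^i_{\Z\GrMod \phi_!(A)}(\k,\phi_!(A))
\]
and the symmetric statement on the opposite side. Combined with the preservation of injective dimension noted above, the forward implication is immediate, the Gorenstein shift passing from $\ell$ to $\phi(\ell)$. For the converse I use exactness and faithfulness of $\phi_!$ to deduce vanishing of $\EXT^i_{\Z^{r+1}\GrMod A}(\k,A)$ for $i\neq n$; in the top degree, $\phi_!$ being a direct sum preserves $\k$-dimension, so $\EXT^n_{\Z^{r+1}\GrMod A}(\k,A)$ is one-dimensional over $\k$, and by Remark \ref{one-dim} is isomorphic to $\k[\ell]$ for some $\ell\in\Z^{r+1}$. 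The symmetric argument on $A^\opp$ yields the left and right $\Z^{r+1}$-AS-Gorenstein properties for $A$, and Lemma \ref{spe-seq-and-csq} then matches the two shifts, giving that $A$ is $\Z^{r+1}$-AS-Gorenstein.

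Finally for (3), it remains only to compare the left and right graded global dimensions. Since $A$ and $\phi_!(A)$ are the same $\k$-algebra with different connected $\N$-gradings, their ungraded left (resp. right) global dimensions coincide, and by Lemma \ref{gl-dim}, whose proof applies verbatim for any such grading, each graded global dimension equals the corresponding ungraded one. Hence finiteness and coincidence of left and right graded global dimensions transfer directly between the two gradings, and combined with (2) this establishes (3). The main obstacle is the backward direction of (2): one must reconstruct a shift in $\Z^{r+1}$ from the one-dimensionality of its image under $\phi_!$, and then invoke Lemma \ref{spe-seq-and-csq} to reconcile the left and right shifts, since $\phi$ need not be injective and so a priori nothing forces the two preimage shifts to coincide.
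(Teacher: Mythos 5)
Your proof is correct and follows essentially the same route as the paper's: establish that $A$ is $\phi$-finite so that $\phi_!=\phi_*$, transfer local cohomology and $\EXT$-vanishing via Propositions \ref{cl-and-!-and-*} and \ref{extension-and-!-and-*} together with the dimension corollaries, use Remark \ref{one-dim} to sidestep tracking Gorenstein shifts through $\phi_!$, invoke Lemma \ref{spe-seq-and-csq} to reconcile the left and right shifts, and finish (3) with Lemma \ref{gl-dim}. The only cosmetic difference is that you deduce $\phi$-finiteness directly from $\supp A\subseteq\N^{r+1}$ rather than from local finiteness of the connected noetherian algebra $\phi_!(A)$; both arguments are valid.
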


\begin{proof} The first item follows immediately from the first statement in Proposition
\ref{cl-and-!-and-*}.

Since $\phi_!(A)$ is connected and noetherian, each of its homogeneous components is
finite dimensional, which implies that $A$ is $\phi$-finite, so $\phi_!(A) = \phi_*(A)$.
Using Corollary \ref{dim-and-*} we get $\Z^{r+1} \injdim A =
\Z \injdim \phi_!(A)$ and $\Z^{r+1} \injdim A^\opp = \Z \injdim
\phi_!(A)^\opp$. By Proposition \ref{extension-and-!-and-*} 
\[
\dim_\k \EXT_{\Z^{r+1}\GrMod A}^i(\k, A) = \dim_\k
\EXT_{\Z\GrMod \phi_!(A)}^i(\k, \phi_!(A))
\] 
for all $i \geq 0$, so using Remark \ref{one-dim} we see that $A$ is left
$\Z^{r+1}$-AS-Gorenstein if and only if $\phi_!(A)$ is left $\Z$-AS-Gorenstein; an
analogous argument works for $A^\opp$. The second item follows from Lemma
\ref{spe-seq-and-csq}.

By Corollary \ref{dim-and-!} the graded projective dimension of $\k$ as a graded $A$ or
$\phi_!(A)$-module coincide, so item 3 follows from item 2 and Lemma \ref{gl-dim}.
\end{proof}
\begin{subremark} -- \rm
Replacing $\phi$ with the trivial morphism $\Z^{r+1} \to \{0\}$ in the proof of Theorem
\ref{AS-regrading}, one can give necessary and sufficient conditions for $A$ to be
$\Z^{r+1}$-AS-Cohen-Macaulay, $\Z^{r+1}$-AS-Gorenstein or $\Z^{r+1}$-AS-regular in terms
of the category $\Mod~A$. In particular we see that if $A$ can be endowed with a different
connected grading such that $\m$ is its unique maximal graded ideal, then $A$ has any of
the aforementioned properties with respect to the new grading if and only if it has this
property with respect to the original grading. Hence the AS-properties may be seen as
invariants of the algebra $A$ and the ideal $\m$, regardless of the grading. This is the
point of view adopted in the article [BZ], where analogues of the AS-Gorenstein and
AS-regular properties are given for augmented algebras.
\end{subremark}

In view of this remark, from now on we omit the prefix ``$\Z^{r+1}$-'' from the
regularity properties mentioned there. We now focus on showing these properties are
invariant under twisting. We begin with an auxiliary lemma.

\begin{sublemma} -- \label{iso-pre-chi} Let $\tau$ be a normalized left twisting system on
	$A$ over $\Z^{r+1}$. For all $i\in\N$ there is a natural isomorphism of functors
	$\EXT^i_{\Z^{r+1}\GrMod A }(\k,-) \cong \EXT^i_{\Z^{r+1}\GrMod {}^\tau\!A}(\k,-)
	\circ \F_\tau$, seen as functors from $\Z^{r+1}\GrMod A$ to $\Z^{r+1}\GrMod\k$.
\end{sublemma}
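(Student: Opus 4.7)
The plan is to adapt the proof of Proposition \ref{twisting-ext}, the simplification being that the trivial module $\k$ behaves very rigidly under $\F_\tau$. The first step is to observe that since $\tau$ is normalized we have $\tau_0 = \id$, and therefore the twisted ${}^\tau\! A$-module $\F_\tau(\k)$ coincides with the trivial ${}^\tau\! A$-module $\k$ on the nose: for $a \in {}^\tau\! A_g$ acting on the generator $1$ of $\k$ (concentrated in degree $0$), one has $a \circ 1 = \tau_0(a) \cdot 1 = a \cdot 1$, which vanishes unless $g = 0$ and reduces to scalar multiplication when $g = 0$.

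Next, I would establish the degree-zero natural isomorphism
\[
\HOM_{\Z^{r+1}\GrMod {}^\tau\! A}(\k, \F_\tau(-)) \;\cong\; \HOM_{\Z^{r+1}\GrMod A}(\k, -)
\]
as functors from $\Z^{r+1}\GrMod A$ to $\Z^{r+1}\GrMod \k$. Because $\F_\tau$ is an isomorphism of categories (Theorem \ref{category-isomorphism}), for each $g \in \Z^{r+1}$ it gives a bijection $\Hom_{\Z^{r+1}\GrMod A}(\k, N[g]) \to \Hom_{\Z^{r+1}\GrMod {}^\tau\! A}(\F_\tau(\k), \F_\tau(N[g]))$. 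Identifying $\F_\tau(\k) = \k$ via the first step, and using the ${}^\tau\! A$-module isomorphism $\F_\tau(N[g]) \cong \F_\tau(N)[g]$ furnished by $\tau_{-g}$ (as invoked in the proof of Proposition \ref{twisting-ext} following [Z;~Theorem~3.4]), the collection of bijections indexed by $g$ assembles into the desired isomorphism of $\Z^{r+1}$-graded $\k$-vector spaces, naturally in the second variable.

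Finally, I would upgrade this to the derived level by $\partial$-functor universality. Being an isomorphism of abelian categories, $\F_\tau$ is exact and sends injective objects to injective objects; hence $(\EXT^i_{\Z^{r+1}\GrMod {}^\tau\! A}(\k,-) \circ \F_\tau)_{i \ge 0}$ is a universal covariant cohomological $\partial$-functor from $\Z^{r+1}\GrMod A$ to $\Z^{r+1}\GrMod \k$ coinciding with $\HOM_{\Z^{r+1}\GrMod A}(\k,-)$ in degree $0$, and is therefore canonically isomorphic in each degree to the universal $\partial$-functor $(\EXT^i_{\Z^{r+1}\GrMod A}(\k,-))_{i\ge 0}$. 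The only technical point requiring any verification is the compatibility of $\F_\tau$ with graded shifts in the middle step; since this is essentially the computation already carried out for Proposition \ref{twisting-ext}, no serious obstacle is anticipated.
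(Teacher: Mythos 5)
Your first step and your last step line up with the paper, but the middle step contains a genuine error. You claim a ${}^\tau\!A$-module isomorphism $\F_\tau(N[g]) \cong \F_\tau(N)[g]$ ``furnished by $\tau_{-g}$'' for a general module $N$. No such isomorphism exists: $\tau_{-g}$ is a graded automorphism of the algebra $A$, not of $N$, so it cannot be applied to elements of an arbitrary module. The isomorphism $({}^\tau\!A)[g] \xrightarrow{\tau_{-g}} {}^\tau\!(A[g])$ used in the proof of Proposition \ref{twisting-ext} is specific to $N=A$; verifying that it is ${}^\tau\!A$-linear uses the multiplicative twisting identity $\tau_{-g}(\tau_{g+g'}(a)b) = \tau_{g'}(a)\tau_{-g}(b)$, which makes sense only because $b$ lies in $A$. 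The paper's own proof explicitly flags this: ``twists and shifts do not commute, so this is not true if we change $\k$ for any graded $A$-module.''

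The fix is the one the paper actually uses, and it is simpler than what you attempted. Since $\k$ is concentrated in degree $0$ and is annihilated by the augmentation ideal $\m$, an $A$-linear map $f:\k\to M[\xi]$ of degree $0$ is determined by $f(1)\in M_\xi$ with $\m\cdot f(1)=0$. Because each $\tau_\xi$ is a graded automorphism of $A$, it preserves $\m$; combined with $\tau_0 = \id$ (normalization), one checks directly that the \emph{same underlying function} $f$ is also ${}^\tau\!A$-linear from $\k$ to $({}^\tau\!M)[\xi]$, and conversely by Lemma \ref{twisted-eq-rel}. Hence $\HOM_{\Z^{r+1}\GrMod A}(\k,M)$ and $\HOM_{\Z^{r+1}\GrMod {}^\tau\!A}(\k,{}^\tau\!M)$ coincide on the nose as subspaces of $\HOM_{\Z^{r+1}\GrMod\k}(\k,M)$, with no shift-commutation needed. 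Your $\partial$-functor argument in the final paragraph then goes through unchanged.
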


\begin{proof} 
Let $M$ be an object of $\Z^{r+1}\GrMod A$. It is routine to check that, for every
$\xi \in \Z^{r+1}$, every $A$-linear function $f: \k \to M[\xi]$ also defines a ${}^\tau
A$-linear function from ${}^\tau \k$ to $({}^\tau M) [\xi]$; notice that twists and shifts
do not commute, so this is not true if we change $\k$ for any graded $A$-module. This
shows that $\HOM_{\Z^{r+1}\GrMod A}(\k,M)$ and $\HOM_{\Z^{r+1}\GrMod{}^\tau\!
A}(\k,{}^\tau\! M)$ coincide as $\k$-subspaces of $\HOM_{\Z^{r+1}\GrMod\k}(\k,M)$, and so
$\HOM_{\Z^{r+1}\GrMod A}(\k,-) = \HOM_{\Z^{r+1}\GrMod{}^\tau\!A}(\k,-) \circ \F_\tau$ as
functors from $\Z^{r+1}\GrMod A$ to $\Z^{r+1}\GrMod\k$. Since $\F_\tau$ preserves
injectives, the $\partial$-functor $(\EXT^i_{\Z^{r+1}\GrMod{}^\tau\!A}(\k,-) \circ
\F_\tau)_{i \geq 0}$ is universal and so by standard homological algebra we get a natural
isomorphism $\EXT^i_{\Z^{r+1}\GrMod A}(\k,-) \cong \EXT^i_{\Z^{r+1}\GrMod{}^\tau\!A}(\k,-)
\circ \F_\tau$ for all $i \geq 0$. 
\end{proof}

We are now ready to prove the result announced in the introduction to this subsection.
\begin{subtheorem} \label{stabilite-par-twist}
Let $\tau$ be a normalized left twisting system on $A$ over $\Z^{r+1}$. 
\begin{enumerate}
\item $A$ is AS-Cohen-Macaulay if and only if $\,{}^\tau\! A$ is AS-Cohen Macaulay.
\item $A$ is AS-Gorenstein if and only if $\,{}^\tau\! A$ is AS-Gorenstein.
\item $A$ is AS-regular if and only if $\,{}^\tau\! A$ is AS-regular.
\end{enumerate}
\end{subtheorem}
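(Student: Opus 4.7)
The plan is to reduce each assertion to the $\Z^{r+1}$-graded setting via Theorem \ref{AS-regrading}, and then to transfer information through the category isomorphism $\F_\tau : \Z^{r+1}\GrMod A \to \Z^{r+1}\GrMod {}^\tau\! A$ of Theorem \ref{category-isomorphism}. The key preliminary observation is that each $\tau_g$ is a degree-preserving $\k$-linear automorphism of $A$ and must therefore fix the unique maximal graded ideal, so $\tau_g(\m) = \m$ for every $g \in \Z^{r+1}$. In particular Lemma \ref{twisting-torsion} and Proposition \ref{Z-twist-on-subalgebras} apply with $\a = \m$, and ${}^\tau\!\m$ is the unique maximal graded ideal of ${}^\tau\! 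A$.

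For the AS-Cohen-Macaulay statement I would apply Lemma \ref{twisting-torsion} directly: it gives $H^i_{{}^\tau\!\m, \Z^{r+1}}({}^\tau\! A) \cong \F_\tau\bigl(H^i_{\m, \Z^{r+1}}(A)\bigr)$, so vanishing for $i \neq n$ on the left transfers to ${}^\tau\! A$. For the opposite-side condition, Theorem \ref{left-right-TS} supplies a right twisting system $\nu$ on $A$ and an isomorphism ${}^\tau\! A \cong A^\nu$ of $\Z^{r+1}$-graded algebras, from which one checks that $({}^\tau\! A)^\opp$ is a left Zhang twist of $A^\opp$; applying Lemma \ref{twisting-torsion} in this left-twist setting settles the opposite side.

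For AS-Gorenstein, the graded injective dimension of $A$ equals that of ${}^\tau\! A$ because $\F_\tau$ is an equivalence of abelian categories sending $A$ to ${}^\tau\! A$, and the right side is handled analogously via Theorem \ref{left-right-TS}. The Ext condition is controlled by Lemma \ref{iso-pre-chi}, which gives $\EXT^i_{\Z^{r+1}\GrMod A}(\k, A) \cong \EXT^i_{\Z^{r+1}\GrMod {}^\tau\! A}(\k, {}^\tau\! A)$ as $\Z^{r+1}$-graded $\k$-vector spaces, so both sides have the same dimension and homogeneous support in every cohomological degree. By Remark \ref{one-dim}, a one-dimensional $\Z^{r+1}$-graded module is determined up to isomorphism by its support, so the Gorenstein shift is preserved. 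Repeating on the opposite algebra, and invoking Lemma \ref{spe-seq-and-csq} if needed, gives AS-Gorenstein for ${}^\tau\! A$ with the same injective dimension and shift as $A$. AS-regularity then follows by combining this with the equality of graded global dimensions, which by Lemma \ref{gl-dim} reduces to comparing the graded projective dimension of $\k$ in $\Z^{r+1}\GrMod A$ and in $\Z^{r+1}\GrMod {}^\tau\! A$; and this equality is immediate from Lemma \ref{iso-pre-chi}.

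The main obstacle is the left-right bookkeeping: the results of Subsection \ref{twistings} are phrased for left twisting systems and left modules, so one must pass through Theorem \ref{left-right-TS} to convert right-sided statements about $A$ into left-sided statements about $A^\opp$. Once this template is in place, the three items follow by the same scheme.
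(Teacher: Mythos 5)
Your proposal is correct and follows essentially the same route as the paper's proof: Lemma \ref{twisting-torsion} together with the left/right conversion of Theorem \ref{left-right-TS} handles the local-cohomology (Cohen-Macaulay) condition, Lemma \ref{iso-pre-chi} combined with Remark \ref{one-dim} and Lemma \ref{spe-seq-and-csq} handles the Gorenstein condition, and Lemma \ref{gl-dim} reduces AS-regularity to the projective dimension of $\k$, preserved by the category isomorphism $\F_\tau$. The only cosmetic difference is that you reformulate the right-sided step by saying $({}^\tau\! A)^\opp$ is itself a left Zhang twist of $A^\opp$, whereas the paper routes this through the explicit change-of-rings functor $\Theta$ from Theorem \ref{left-right-TS}; these are equivalent and carry the same content.
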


\begin{proof} 
Recall from Theorem \ref{left-right-TS} that there exists a right-twisting system $\nu$
on $A$ and an isomorphism $\theta: {}^\tau A \to A^\nu$. If we denote by $\Theta$ the
change of rings functor induced by $\theta^{-1}$, then $\Theta({}^\tau A) = A^\nu$.
\begin{enumerate}
\item Fix $i\in\N$. Lemma \ref{twisting-torsion} shows that $H_{\m,\Z^{r+1}}^i(A)=0$ if
and only if $H_{{}^\tau \m,\Z^{r+1}}^i({}^\tau\!A)=0$. Since $\Theta$ is an isomorphism of
categories, the proof of Lemma \ref{twisting-torsion} can be adapted to show that there is
a natural isomorphism \[H_{({}^\tau\!\m)^\opp,\Z^{r+1}}^i \cong \Theta^{-1} \circ
H_{(\m^{\nu})^\opp,\Z^{r+1}}^i \circ \Theta.\] Using this and the right-sided version of
Lemma \ref{twisting-torsion}, we get that $H_{\m^\opp,\Z^{r+1}}^i(A)=0$ if and only if
$H_{({}^\tau\m)^\opp,\Z^{r+1}}^i({}^\tau\!A)=0$. 

\item As we pointed out before, $\Z^{r+1}\GrMod A$ and $\Z^{r+1}\GrMod \,{}^\tau\!A$
are isomorphic categories, and the same holds for $\Z^{r+1}\GrMod A^\opp$ and
$\Z^{r+1}\GrMod ({}^\tau\!A)^\opp$. It follows that the left and right graded injective
dimensions of $A$ are equal to the corresponding graded injective dimensions of ${}^\tau
A$. By Lemma \ref{iso-pre-chi} there are graded $\k$-vector space isomorphisms
$\EXT^i_{\Z^{r+1}\GrMod{}^\tau\! A}(\k,{}^\tau\! A) \cong \EXT^i_{\Z^{r+1}\GrMod A}(\k,A)$
for all $i \geq 0$; arguing as for item 1, we also get isomorphisms of graded $\k$-vector
spaces $\EXT^i_{\Z^{r+1}\GrMod ({}^\tau\! A)^\opp}(\k,{}^\tau\! A) \cong
\EXT^i_{\Z^{r+1}\GrMod A^\opp}(\k,A)$. From this we deduce that $A$ is left, resp. right
AS-Gorenstein if and only if ${}^\tau A$ is left, resp. right, AS-Gorenstein; using Remark
\ref{one-dim}, the result follows by Lemma \ref{spe-seq-and-csq}.

\item The details concerning the AS-regular property are similar. \qedhere
\end{enumerate}
\end{proof}

\subsection{Dualizing complexes} \label{dc}
Recall that we denote by $A$ a noetherian connected $\N^{r+1}$-graded $\k$-algebra, and by
$\phi: \Z^{r+1} \to \Z$ the group morphism defined by the assignation $(a_0, \ldots, a_r)
\mapsto a_0 + \ldots + a_r$. We will consider $A$ as a connected $\N$-graded $\k$-algebra
with the grading induced by $\phi$. Our main concern in this subsection is
whether the existence of a balanced dualizing complex is a twisting invariant property.

We quickly recall some basic notions concerning dualizing complexes for the convenience of
the reader. Fix a noetherian connected $\N$-graded $\k$-algebra $B$ and set $B^e
= B\otimes B^\opp$. We denote by $\D(\Z\GrMod B)$ the derived category of $\Z\GrMod B$;
we denote by $\D^+(\Z\GrMod B),$ $\D^-(\Z\GrMod B)$ and $\D^b(\Z\GrMod B)$ the full
subcategories of $\D(\Z\GrMod B)$ whose objects are bounded below, bounded above
and bounded complexes, respectively. Of course we may replace $B$ with $B^\opp$ and $B^e$,
and we denote by $\Res_B: \D(\Z \GrMod B^e) \to \D(\Z \GrMod B)$ the obvious restriction
functor, with an analogous one $\Res_{B^\opp}$ for $B^\opp$. These functors preserve
projectives and injectives, see [Y; \S 2] for details. We write $\HOM_B$ instead of
$\HOM_{\Z\GrMod B}$, and denote its $i$-th derived functor by $\EXT_B^i$ for all $i \geq
0$.

The functor $\HOM_B: \Z\GrMod B^e \times \Z\GrMod B^e \longrightarrow \Z\GrMod B^e$
has a derived functor, $\R\HOM_B : \D(\Z\GrMod B^e) \times \D^+(\Z\GrMod B^e )
\longrightarrow \D(\Z\GrMod B^e)$ which can be calculated using resolutions by bimodules
that are projective as left $B$-modules on the first variable, or resolutions by bimodules
that are injective as left $B$-modules in the second variable. For more details, see [Y;
Theorem 2.2].

Denote by $\n$ the maximal graded ideal of $B$. The torsion functor $\Gamma_\n
: \Z\GrMod B  \longrightarrow \Z\GrMod B $ has a right derived functor $\R\Gamma_\n :
\D^+(\Z\GrMod B ) \longrightarrow \D^+(\Z\GrMod B )$. Notice that $\Gamma_\n$ sends
bimodules to bimodules, and so it induces an endofunctor of $\Z\GrMod B^e$. By abuse of
notation we denote the torsion functor on bimodules by $\Gamma_\n$ and its right derived
functor by $\R \Gamma_\n$. This abuse is justified since $\Gamma_\n$ commutes with
$\Res_B$ and this last functor preserves injectives, so for any left bounded complex of
bimodules $M^\bullet$ we have $\R\Gamma_\n(\Res_B(M^\bullet)) = \Res_B(\R
\Gamma_\n(M^\bullet))$.

Given a complex of $\Z$-graded $\k$-vector spaces $V^\bullet$ we denote by $(V^\bullet)'$ its
Matlis dual. Details on Matlis duals can be found in [VdB; \S 3]. The following definition
is found in [Y; Definitions 3.3 and 4.1] and [VdB; Definition 6.2].
\begin{subdefinition} -- \label{def-DC}
An object $R^\bullet$ of $\D^+(\Z\GrMod B^e)$ is called a \emph{dualizing complex} if it
satisfies the following conditions:
\begin{enumerate}
\item The objects $\Res_B(R^\bullet)$ of $\D(\Z\GrMod B )$ and $\Res_{B^\opp}(R^\bullet)$
of $\D(\Z\GrMod B^\opp)$ have finite injective dimension.

\item The objects $\Res_B(R^\bullet)$ of $\D(\Z\GrMod B )$ and $\Res_{B^\opp}(R^\bullet)$
of $\D(\Z\GrMod B^\opp)$ have finitely generated cohomology.

\item The natural morphisms $B^\opp \longrightarrow \R\HOM_B(R^\bullet,R^\bullet)$ and $B
\longrightarrow \R\HOM_{B^\opp} (R^\bullet,R^\bullet)$ are
isomorphisms in $\D^+(\Z\GrMod B^e)$.
\end{enumerate}
A dualizing complex $R^\bullet$ is said to be \emph{balanced} if $\R \Gamma_\n(R^\bullet)
\cong B' \cong \R \Gamma_{\n^\opp}(R^\bullet)$ in $\D(\Z\GrMod B^e)$. 
\end{subdefinition}
As shown in [Y; Corollary 4.21], if a balanced dualizing complex exists then it is
unique up to isomorphism, and given by $\R\Gamma_\n(B)'$.

\begin{subremark} -- \label{flcd-et-borne} \rm 
Recall that the local cohomological dimension of $B$, denoted by $\lcd_{\n, \Z}B$, is the
cohomological dimension of the functor $\Gamma_\n$ over $\Z\GrMod B$; since $\R\Gamma_\n$
commutes with the restriction functor, the cohomological dimension of $\Gamma_\n$ over
$\Z\GrMod B^e$ is bounded by this number.

If $\lcd_{\n,\Z} B = d < \infty$ then $\R\Gamma_\n(B) \in \D^b(\Z\GrMod B^e)$. Indeed,
let $I^\bullet$ be an injective resolution of $B$ in $\Z\GrMod B^e$. Truncating at
position $d$ we get the complex
\[
J^\bullet = 0 \longrightarrow I^0 \longrightarrow I^1 \longrightarrow \dots
\longrightarrow I^{d-1} \longrightarrow \ker(f^d) \longrightarrow 0 \longrightarrow \dots
\]
which is a $\Gamma_\n$-acyclic resolution of $B$ in $\Z\GrMod B^e$, so $\R \Gamma_\n(B)
\cong \Gamma_\n(J^\bullet)$ is a bounded complex.
\end{subremark}

The following property was introduced in the seminal paper [AZ]. 
\begin{subdefinition} 
The algebra $B$ is said to have property $\chi$ (on the left) if for every finitely
generated object $M$ of $\Z\GrMod B $, $\EXT_B^i(\k,M)$ has right bounded grading for all
$i\in\N$. Further, $B$ is said to satisfy property $\chi$ on the right if $B^\opp$
satisfies property $\chi$.
\end{subdefinition}

The following observations on property $\chi$ will be useful later. 
For proofs and more information, the reader is referred to section 3 in [AZ].

\begin{subremark} -- \label{ki-et-dimension} \rm
Recall that a $\Z$-graded $\k$-vector space is said to be locally finite if each of its
homogeneous components is a finite dimensional $\k$-vector space. Since $B$ is connected
and noetherian, it is locally finite, and so is any object of $\Z\GrMod B $. If $N$ and
$M$ are objects of $\Z\GrMod B $ with $N$ finitely generated and $M$ left bounded and
locally finite, then for each $i \geq 0$ the $\k$-vector spaces $\EXT_{\Z\GrMod B }^i(N,M)$ are
left bounded and locally finite. It follows at once that $B$ satisfies property $\chi$ if
and only if for every object $M$ of $\Z\GrMod B $ and each $i \geq 0$ the $\k$-vector space
$\EXT_B^i(\k,M)$ is finite dimensional.

As stated in [AZ, Corollary 3.6], $B$ satisfies property $\chi$ if and only if, for every
object $M$ of $\Z\GrMod B $, the cohomology module $H_\m^i(M)$ has right bounded grading
for all $i\in\N$. Notice that property $\chi^\circ$ mentioned in the reference is
equivalent to $\chi$ because $B$ is locally finite, see [AZ, Proposition 3.11 (2)].
\end{subremark}

The following is a result relating property $\chi$ with the existence
of a balanced dualizing complex for $B$, see [VdB; Theorem 6.3].

\begin{subtheorem} -- {\bf Existence criterion for balanced dualizing complexes}
\label{critere-vdb}\\
The algebra $B$ has a balanced dualizing complex if and only if both $B$ and $B^\opp$ have
finite local cohomological dimension and satisfy property $\chi$.
\end{subtheorem}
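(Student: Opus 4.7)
The plan follows Van den Bergh's original approach. The key observation is that any balanced dualizing complex must be isomorphic to $R^\bullet := \R\Gamma_\n(B)'$, as recorded after Definition \ref{def-DC} (see [Y; Corollary 4.21]). Thus in both directions the candidate is determined from the outset, and the task reduces to checking how the structure of this one explicit complex interacts with the finiteness conditions in the statement.

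For the forward implication, assume a balanced dualizing complex $R^\bullet$ exists. Matlis duality applied to $\R\Gamma_\n(R^\bullet) \cong B'$ yields $\R\Gamma_\n(B) \cong (R^\bullet)'$; since $R^\bullet$ has finite injective dimension and locally finite cohomology, its Matlis dual is a bounded complex, so $H^i_\n(B)=0$ for $i\gg 0$. This vanishing then extends from $B$ to all objects of $\Z\GrMod B$ by expressing any module as a filtered colimit of finitely generated ones and using that $H^i_\n$ commutes with filtered colimits over the noetherian algebra $B$. For property $\chi$, I would invoke a noncommutative local duality isomorphism of the form $H^i_\n(M)' \cong \EXT^{d-i}_B(M, R^\bullet)$ for finitely generated $M$, where $d$ is the injective dimension of $R^\bullet$; finite generation of the cohomology of $R^\bullet$ makes the right-hand side left-bounded, hence $H^i_\n(M)$ is right-bounded, which is property $\chi$ via Remark \ref{ki-et-dimension}. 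The same reasoning on the opposite side handles $B^\opp$.

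For the converse, I set $R^\bullet := \R\Gamma_\n(B)'$. Finite local cohomological dimension guarantees, via Remark \ref{flcd-et-borne}, that $\R\Gamma_\n(B) \in \D^b(\Z\GrMod B^e)$, so $R^\bullet \in \D^+(\Z\GrMod B^e)$. The balanced condition $\R\Gamma_\n(R^\bullet) \cong B'$ follows from the construction by Matlis double duality applied to the locally finite complex $\R\Gamma_\n(B)$, while the symmetric identity $\R\Gamma_{\n^\opp}(R^\bullet) \cong B'$ requires property $\chi$ on the right side to reconcile the left and right torsion functors acting on the bimodule complex. Finite generation of the cohomology of $R^\bullet$, seen both as a left and a right $B$-module, follows via the Matlis functor from property $\chi$, which by Remark \ref{ki-et-dimension} gives right-boundedness and local finiteness of $H^i_\n(B)$.

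The main obstacle is verifying simultaneously finite injective dimension of $R^\bullet$ on both sides and the reflexivity isomorphisms $B \cong \R\HOM_B(R^\bullet, R^\bullet)$ and $B \cong \R\HOM_{B^\opp}(R^\bullet, R^\bullet)$. The strategy is to first establish a noncommutative local duality theorem $\R\Gamma_\n \cong \R\HOM_B(-, R^\bullet)'$ on finitely generated modules; property $\chi$ is precisely the hypothesis that allows a d\'evissage from the trivial module $\k$, where the isomorphism holds by construction, to all finitely generated modules, while finite lcd bounds the length of the resulting injective resolution of $R^\bullet$. Once local duality is in place, the reflexivity conditions and condition (1) of Definition \ref{def-DC} follow by evaluating both sides on $M = B$ and on $M = R^\bullet$ respectively, completing the verification.
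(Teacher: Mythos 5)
The paper does not prove this statement; it is quoted verbatim from Van den Bergh's article with the citation ``see [VdB; Theorem 6.3]'', and the authors use it as an external input for Proposition~\ref{tranfert-bdc}. So there is no internal proof to compare against, and your attempt has to be judged as a reconstruction of Van den Bergh's argument.

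Your sketch has the right overall shape (take $R^\bullet = \R\Gamma_\n(B)'$; prove a local duality theorem; read off the axioms of Definition~\ref{def-DC}), but two of the steps you describe as coming ``from Matlis (double) duality'' do not in fact come from Matlis duality, and the second of these is precisely the hard part of the theorem. In the forward direction, applying Matlis duality to the isomorphism $\R\Gamma_\n(R^\bullet) \cong B'$ gives $\R\Gamma_\n(R^\bullet)' \cong B$, not $\R\Gamma_\n(B) \cong (R^\bullet)'$; the correct source of the identification $R^\bullet \cong \R\Gamma_\n(B)'$ is the uniqueness statement [Y; Corollary 4.21] that you already cite, and then the dualization gives $(R^\bullet)' \cong \R\Gamma_\n(B)$. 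More seriously, in the converse direction the balanced condition $\R\Gamma_\n(R^\bullet) \cong B' \cong \R\Gamma_{\n^\opp}(R^\bullet)$ does \emph{not} follow from applying Matlis double duality to $\R\Gamma_\n(B)$: double duality only gives $\R\Gamma_\n(B)'' \cong \R\Gamma_\n(B)$, whereas you need to compute $\R\Gamma_\n$ (and $\R\Gamma_{\n^\opp}$) of the dual complex. The actual content of Van den Bergh's proof at this point is the nontrivial left--right symmetry $\R\Gamma_\n(B) \cong \R\Gamma_{\n^\opp}(B)$ in $\D(\Z\GrMod B^e)$, which is where property $\chi$ on both sides and the finiteness of lcd are really consumed, and which is established well before the balanced condition can be checked. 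Your final paragraph correctly identifies that a local duality theorem is the key intermediate result and that property $\chi$ drives a d\'evissage argument, but without isolating the bimodule symmetry $\R\Gamma_\n(B) \cong \R\Gamma_{\n^\opp}(B)$ as a separate lemma, the verification of the balanced condition and of the reflexivity isomorphisms cannot be completed, so the sketch as written has a genuine gap at its center.
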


We will use Van den Bergh's criterion to show that $A$ has a balanced dualizing complex if
and only if any twist ${}^\tau A$ has a balanced dualizing complex. For this we need the
following result.
\begin{subproposition} -- \label{CNS-chi}
Suppose $B$ has finite local cohomological dimension. In that case $B$ has property $\chi$
if and only if the $\Z$-graded $\k$-vector spaces $\EXT_{B}^i(\k,B)$ are finite
dimensional for all $i \geq 0$.
\end{subproposition}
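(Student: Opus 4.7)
The plan for the ``only if'' direction is to invoke Remark \ref{ki-et-dimension}: since $B$ is connected and noetherian, $\EXT_B^i(\k, N)$ is automatically left-bounded and locally finite for every finitely generated $N$, so property $\chi$ (right-boundedness of $\EXT_B^i(\k, N)$) is equivalent to $\EXT_B^i(\k, N)$ being finite-dimensional as a graded $\k$-vector space. Specializing to $N = B$ immediately yields the stated necessary condition.

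For the ``if'' direction, I would assume $\lcd_{\n, \Z} B = d < \infty$ and that each $\EXT_B^i(\k, B)$ is finite-dimensional, and try to establish property $\chi$ via the same characterization. Let $\mathcal{C}$ be the class of $M \in \Z\grmod B$ with $\dim_\k \EXT_B^i(\k, M) < \infty$ for every $i \geq 0$. The long exact sequence for Ext shows $\mathcal{C}$ is closed under extensions, shifts, and finite direct sums, and by hypothesis $B$, hence every finite direct sum of shifts of $B$, lies in $\mathcal{C}$. Moreover, since $B$ is left noetherian, $\k$ admits a projective resolution by such finite free modules, so applying $\HOM_B(-, M)$ and taking cohomology immediately shows that every finite-dimensional $M$ belongs to $\mathcal{C}$. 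To extend $\mathcal{C}$ to all of $\Z\grmod B$ I would exploit the finite local cohomological dimension: by Remark \ref{flcd-et-borne}, $\R\Gamma_\n(B) \in \D^b(\Z\GrMod B^e)$, and since $\k$ is $\n$-torsion there is a natural isomorphism $\R\HOM_B(\k, M) \cong \R\HOM_B(\k, \R\Gamma_\n M)$ in $\D(\Z\GrMod\k)$, giving a convergent Grothendieck spectral sequence
\[
E_2^{p,q} = \EXT_B^p(\k, H^q_\n(M)) \Longrightarrow \EXT_B^{p+q}(\k, M)
\]
with only finitely many nonzero rows ($0 \leq q \leq d$). The plan is then to combine this spectral sequence with a finite free presentation $0 \to K \to F \to M \to 0$ and the long exact sequence in local cohomology to set up a noetherian induction reducing the finite-dimensionality of $\EXT_B^i(\k, M)$ to the case $M = B$ covered by hypothesis.

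The hard part will be this last reduction step: $H^q_\n(M)$ is $\n$-torsion but generally neither finitely generated nor finite-dimensional, so the closure properties of $\mathcal{C}$ do not apply to it directly. The finite lcd hypothesis plays a double role, truncating the spectral sequence so that only $q \leq d$ contribute and ensuring that $\R\Gamma_\n(F) \in \D^b$ for every finite free $F$; the hypothesis on $\EXT_B^i(\k, B)$ provides the base case of the induction. The delicate point will be extracting control of $\EXT_B^p(\k, H^q_\n(M))$ from the corresponding statement for $B$ using both the spectral sequence above and the long exact sequence in local cohomology applied to $0 \to K \to F \to M \to 0$, carefully organizing the dimension-shifting so that both hypotheses enter in an essential way at the right indices.
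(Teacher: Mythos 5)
Your ``only if'' direction coincides with the paper's: Remark \ref{ki-et-dimension} shows property $\chi$ forces $\EXT_B^i(\k,M)$ finite-dimensional for all finitely generated $M$, and you specialize to $M=B$.

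The ``if'' direction is the real content and your plan has a genuine gap, which you flag yourself. The Grothendieck spectral sequence $E_2^{p,q}=\EXT_B^p(\k, H^q_\n(M)) \Rightarrow \EXT_B^{p+q}(\k,M)$ does exist ($\k$ is $\n$-torsion and $\Gamma_\n$ preserves injectives on a noetherian $B$), but it runs the wrong way for your purposes: finite-dimensionality of the abutment $\EXT_B^{p+q}(\k,B)$ says nothing about the $E_2$-terms $\EXT_B^p(\k,H^q_\n(B))$, since $E_\infty^{p,q}$ is only a subquotient of $E_2^{p,q}$. The modules $H^q_\n(M)$ for $q>0$ are neither finitely generated nor finite-dimensional, so none of the closure properties of your class $\mathcal{C}$ apply to them, and dimension shifting along $0\to K\to F\to M\to 0$ just replaces $M$ by another finitely generated module $K$ with no notion of ``smaller'' -- the advertised noetherian induction has no well-founded poset. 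The missing ingredient is exactly what the paper supplies: the graded local duality theorem (\emph{[VdB; Theorem 5.1]}, \emph{[Jor; Theorem 2.3]}) yields $\R\Gamma_\n(M)' \cong \R\HOM_B(M,\R\Gamma_\n(B)')$ in $\D(\Z\GrMod B^\opp)$, which turns the problem into one about the \emph{fixed} bounded complex $\R\Gamma_\n(B)'$. Then the hypothesis on $\EXT_B^i(\k,B)$ is used, via \emph{[AZ; Cor.\ 3.6(3)]}, precisely to show that the cohomology of $\R\Gamma_\n(B)'$ has left-bounded grading, and the way-out argument of \emph{[Hart; Prop.\ 7.3(i)]} transfers this to $\R\HOM_B(M,\R\Gamma_\n(B)')$ for every finitely generated $M$. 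Without the duality isomorphism there is no mechanism to bootstrap from $M=B$ to general $M$, so as written your plan does not close.
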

\begin{proof} 
We have already stated that property $\chi$ implies that $\EXT_B^i(\k,M)$ is finite
dimensional for all $i$, so the ``only if'' part is clear.

For the ``if'' part, we use the local duality theorem for connected graded algebras, see
[VdB; Theorem 5.1] or [Jor; Theorem 2.3]: under the hypothesis, for
every object $M$ of $\Z\grmod B$ there exists an isomorphism
\[
\R\Gamma_\n(M)' \cong \R\HOM_B(M, \R\Gamma_\n(B)')
\]
in $\D(\Z\GrMod B^\opp)$. Since $B$ has property $\chi$ if and only if $H^i_\n(M)$ has
right bounded grading for all $M$ in $\Z\grmod B$, it is enough to check that the
cohomology modules of the complex on the left hand side of this isomorphism have
\emph{left} bounded grading.

Let $\A$, resp $\B$, be the category $\Z\GrMod B^e$, resp. $\Z\GrMod B^\opp$, and let
$\A'$ and $\B'$ be the corresponding thick subcategories consisting of objects with
left bounded grading. For every object $M$ of $\Z\GrMod B $ we consider the functor
\[
\R\HOM_B(M,-) : \D^+(\A) \longrightarrow \D(\B).  
\]
Let $X$ be an object of $\A'$. We can compute $\EXT^i_{B}(M,X)$ using a
finitely generated free resolution of $M$, say $P^\bullet$. Since $B$ has left bounded
grading, the same holds for all the $B^\opp$-modules of the complex $\HOM_B(P^\bullet,
M)$, and hence also for $\EXT^i_B(M, X)$, being a subquotient of $\HOM_B(P^{-i}, X)$; in
other words $\EXT^i_B(M, X)$ lies in $\B'$. By [Hart; Proposition 7.3(i)], given any
complex $X^\bullet$ with cohomology modules in $\A'$, the cohomology modules of
$\R\HOM_B(M, X^\bullet)$ are in $\B'$.

By Remark \ref{flcd-et-borne}, $\R\Gamma_\n(B)$ and $\R\Gamma_\n(B)'$ lie in
$\D^b(\Z\GrMod B^e)$. The hypothesis on $B$ together with [AZ;
corollary 3.6(3)] ensures that the cohomology modules of this complex lie in $\A'$, so the
cohomology modules of $\R\HOM_B(M, \R\Gamma_\n(B)') \cong \R \Gamma_\n(M)'$ lie in $\B'$,
that is they have left bounded grading.
\end{proof}

We are now ready to prove that having a balanced dualizing complex is a twisting-invariant
property. Once again we regard $A$ as being $\N$-graded connected with the grading induced
by $\phi$.
\begin{subproposition} -- \label{tranfert-bdc}
Let $\tau$ be a normalized left twisting system on $A$ over $\Z^{r+1}$. 
\begin{enumerate}
\item $\lcd_{\m,\Z}(A) = \lcd_{{}^\tau \m, \Z}({}^\tau\!A)$.

\item Suppose $\lcd_{\m, \Z} A$ is finite. In that case $A$ has property $\chi$ if and only if
${}^\tau\! A$ has property $\chi$.

\item $A$ has a balanced dualizing complex if and only if $\,{}^\tau\! A$ has a balanced
dualizing complex.
\end{enumerate}
\end{subproposition}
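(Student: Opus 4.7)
The plan is to prove the three items in order, using the category isomorphism $\F_\tau$ and the change-of-grading machinery of Subsection \ref{ss-change-grading-group} to do the heavy lifting, and then to conclude item (3) immediately from Van den Bergh's criterion (Theorem \ref{critere-vdb}).

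For item (1), the maximal graded ideal $\m = \bigoplus_{g \neq 0} A_g$ is stable under each $\tau_g$, since the $\tau_g$ are graded $\k$-linear automorphisms of $A$. Thus Lemma \ref{twisting-torsion} applies at $G = \Z^{r+1}$, giving natural isomorphisms $H^i_{{}^\tau\!\m,\Z^{r+1}} \circ \F_\tau \cong \F_\tau \circ H^i_{\m,\Z^{r+1}}$ for all $i$. Because $\F_\tau$ is an isomorphism of abelian categories, these families of functors vanish simultaneously, so $\lcd_{\m,\Z^{r+1}}(A) = \lcd_{{}^\tau\!\m,\Z^{r+1}}({}^\tau\! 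A)$. Both $A$ and ${}^\tau\! A$ are noetherian since $\F_\tau$ transports chain conditions, so two applications of Corollary \ref{lcd-group-change} identify each side with the corresponding $\Z$-graded local cohomological dimension, yielding item (1).

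For item (2), item (1) together with the hypothesis gives $\lcd_{{}^\tau\!\m,\Z}({}^\tau\! A) < \infty$, so Proposition \ref{CNS-chi} reduces property $\chi$ on the left, for both $A$ and ${}^\tau\! A$, to the finite dimensionality over $\k$ of $\EXT^i(\k,-)$ evaluated at the algebra. I would combine two transfers at the $\Z^{r+1}$-graded level. First, Proposition \ref{twisting-ext} together with the observation that $\F_\tau(\k) = \k$ (the normalized twisting fixes the unit and $\m$ acts trivially on $\k$ on either side) gives an isomorphism $\EXT^i_{\Z^{r+1}\GrMod {}^\tau\! A}(\k,{}^\tau\! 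A) \cong \EXT^i_{\Z^{r+1}\GrMod A}(\k,A)$. Second, Proposition \ref{extension-and-!-and-*} applied with $M = A$ yields $\phi_!\bigl(\EXT^i_{\Z^{r+1}\GrMod A}(\k,A)\bigr) \cong \EXT^i_{\Z\GrMod A}(\k,A)$, using $\phi_!(\k) = \k$ and $\phi_!(A) = A$; the functor $\phi_!$ preserves total $\k$-dimension because it only reassembles homogeneous pieces. Composing these isomorphisms transports finite dimensionality in either direction. The right-sided version is handled symmetrically, after converting ${}^\tau\! A$ into $A^\nu$ via Theorem \ref{left-right-TS} so that the right-module analogues of Lemma \ref{twisting-torsion} and Proposition \ref{twisting-ext} can be invoked on the opposite algebra.

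Item (3) is then immediate: by Van den Bergh's criterion (Theorem \ref{critere-vdb}), the existence of a balanced dualizing complex for a connected noetherian $\N$-graded $\k$-algebra is equivalent to the conjunction of finite local cohomological dimension and property $\chi$ on both sides, and items (1) and (2) show that each of these four conditions is transported between $A$ and ${}^\tau\! A$. I expect the main obstacle to be not a single deep step but rather the careful bookkeeping needed on the right-hand side: one must systematically invoke Theorem \ref{left-right-TS} to replace the left twist ${}^\tau\! A$ by a right twist $A^\nu$ of $A$ before applying the opposite-algebra analogues of Lemma \ref{twisting-torsion}, Proposition \ref{twisting-ext}, and Proposition \ref{extension-and-!-and-*} to complete the transfer of right-sided finite local cohomological dimension and property $\chi$.
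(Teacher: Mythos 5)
Your proof is correct and follows essentially the same route as the paper: reduce the $\Z$-graded invariants to the $\Z^{r+1}$-graded level via Corollary~\ref{lcd-group-change} and Proposition~\ref{extension-and-!-and-*}, transfer across the twist using Lemma~\ref{twisting-torsion}, and invoke Proposition~\ref{CNS-chi} together with Van den Bergh's criterion. The only superficial difference is that you use Proposition~\ref{twisting-ext} (with ${}^\tau\k=\k$) where the paper invokes Lemma~\ref{iso-pre-chi}; the two yield the same identification of $\EXT^i(\k,A)$ with $\EXT^i(\k,{}^\tau\!A)$, and your explicit handling of the right-sided bookkeeping via Theorem~\ref{left-right-TS} is a welcome elaboration of a point the paper leaves implicit.
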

\begin{proof}
By Corollary \ref{lcd-group-change} we know that $\lcd_{\m, \Z} A = \lcd_{\m,
\Z^{r+1}} A$ and the same holds for ${}^\tau A$, so item 1 follows from Lemma
\ref{twisting-torsion}. By Proposition \ref{extension-and-!-and-*} and Lemma
\ref{iso-pre-chi}, for every $i \geq 0$, the $\k$-vector spaces $\EXT_{\Z\GrMod A}^i(\k, A),
\EXT_{\Z^{r+1}\GrMod A}^i(\k, A), \EXT_{\Z\GrMod {}^\tau A}^i(\k, {}^\tau A)$ and
$\EXT_{\Z^{r+1}\GrMod {}^\tau A}^i(\k, {}^\tau A)$ are isomorphic, so item 2 follows
from Proposition \ref{CNS-chi}. Finally item 3 follows from 1 and 2 by Van den Bergh's
criterion, Theorem \ref{critere-vdb}.
\end{proof}

\section{Twisted semigroup algebras.}\label{tsa}
In this section, we introduce some specific types of noncommutative deformations of
semigroup algebras. The study of these algebras is the main goal of this work. Although
the hypothesis is not always necessary, we will always assume that semigroups are
commutative, cancellative and have a neutral element; accordingly, subsemigroups must
contain the neutral element and we only consider morphisms compatible with this structure.
Any semigroup $S$ with these properties has a group of fractions $G$, obtained by adding
formal inverses to all elements by a process completely analogous to that of passing from
$\Z$ to $\QQ$, with the property that any semigroup morphism $S \to H$ with $H$ a group
factors through $G$. We will often identify a semigroup with its image inside its group of
fractions. The definition of a $G$-graded $\k$-algebra for a given commutative group $G$
extends in an obvious way to the notion of an $S$-graded $\k$-algebra, and any $S$-graded
$\k$-algebra can be seen as a $G$-graded $\k$-algebra, $G$ being the group of fractions of
$S$, where the homogeneous component of an element of $G$ not in $S$ is trivial.
	
The section is organized as follows. In subsection \ref{ss-tsa} we define certain
noncommutative deformations of a semigroup algebra by means of $2$-cocycles over the
corresponding semigroup. In subsection \ref{ss-tasa} we focus on the case where the
semigroup is a finitely generated sub-semigroup of $\Z^{n+1}$ for some $n \in \N$, and
apply the results obtained in the previous sections to establish results regarding their
homological regularity properties; we also prove some ring theoretical properties of these
algebras. In section \ref{ss-tla} we study \emph{twisted lattice algebras}, which were
introduced in [LR1] as degenerations of quantum analogues of Schubert varieties, and show
that they fall under the scope of the previous subsection. \\

\subsection{Twisted semigroup algebras.}\label{ss-tsa}
Let $(S,+)$ be a commutative cancellative semigroup and let $G$ be its group of fractions.
Recall that the semigroup algebra $\k[S]$ is by definition the $\k$-vector space with
basis $\{X^s \mid s \in S\}$ and  multiplication defined on the generators by $X^s \cdot
X^t = X^{s+t}$ and extended bilinearly. The semigroup algebra is a $G$-graded
$\k$-algebra, where the homogeneous component of degree $s$ of $\k[S]$ is $\k X^s$ if
$s\in S$ and $\{0\}$ if $s\in G\setminus S$.

We are interested in the possible associative unitary integral algebra structures one can
give to the underlying $\k$-vector space of $\k[S]$ which respect its $G$-grading. Given $s,t
\in S$ and any such product $*$ over $\k[S]$,
\[
X^s * X^t = \alpha(s,t) X^{s+t} \qquad \qquad (\dagger)
\]
where $\alpha(s,t)\in\k^*$, so $*$ induces a map $\alpha: S \times S \longrightarrow
\k^*$. We will prove that this is enough to guarantee integrality under a
mild hypothesis on the semigroup. In order to guarantee associativity $\alpha$ must
fulfill a $2$-cocycle condition: for any three elements $s, t, u \in S$ we must have 
$\alpha(s,t)\alpha(s+t,u) = \alpha(t,u) \alpha(s,t+u)$. Conversely any $2$-cocycle over
$S$ with coefficients in $\k^*$ defines an associative product on the $\k$-vector space $\k[S]$
by means of formula $(\dagger)$. We denote by $\cdot_\alpha$ this product, and by
$\k^\alpha[S]$ the algebra thus obtained. This is a unitary algebra with $1_{\k^\alpha[S]}
= \alpha(0_S,0_S)^{-1} X^0$.

The $2$-cocycles of $S$ with coefficients in the group $\k^*$ form a group with pointwise
multiplication, denoted by $C^2(S,\k^*)$. The unit of this group is the map $\textbf{1} :
S \times S \longrightarrow \k^*$, defined by $(s,t) \mapsto 1$. Clearly
$\k^{\textbf{1}}[S] = \k[S]$.

\begin{subdefinition} -- 
Let $\alpha \in C^2(S,\k^*)$. We refer to $\k^{\alpha}[S]$ as the \emph{$\alpha$-twisting}
of $\k[S]$.
\end{subdefinition}

\begin{subremark} -- \rm  
A $2$-cocycle $\alpha$ of $S$ with coefficients in $\k^*$ will be called
\emph{normalized} if $\alpha(0,0) = 1$. For such a $2$-cocycle, the unit of $\k^\alpha[S]$
is $X^0$. We denote by $C^2_{\norm}(S,\k^*)$ the subgroup of $C^2(S, \k^*)$ consisting of
normalized $2$-cocycles.
Given $\alpha \in C^2(S, \k^*)$ and $a = \alpha(0,0)$, the $2$-cocycle $\alpha' =
a^{-1}\alpha$ is normalized, and we call it the \emph{normalization} of $\alpha$.
Multiplication by the constant $a \in \k^*$ provides an isomorphism of $S$-graded
$\k$-algebras between $\k^\alpha[S]$ and $\k^{\alpha'}[S]$, so without loss of generality
we may always consider normalized $2$-cocycles.
\end{subremark}

Given a function $f: S \to \k^*$, the associated coboundary $\partial f : S \times S \to
\k^*$ is defined by 
\[
\partial f(s,t) = \frac{f(s)f(t)}{f(s+t)}.
\]
This is always a $2$-cocycle. The $2$-coboundaries form a subgroup of $C^2(S,\k^*)$
denoted by $B^2(S,\k^*)$. A $2$-coboundary $\partial f$ is normalized if and only if
$f(0)=1$. The following lemma proves that the cohomology group $H^2(S, \k^*) = C^2(S,
\k^*) / B^2(S, \k^*)$ parametrizes the isomorphism classes of $G$-graded integral unitary
algebra structures over $\k[S]$.

\begin{sublemma} -- 
Given $\alpha, \beta \in C^2(S,\k^*)$, the algebras $\k^\alpha[S]$ and $\k^\beta[S]$ are
isomorphic as unitary $S$-graded $\k$-algebras if and only if there exists a
$2$-coboundary $\partial f$ such that $\alpha = (\partial f)\beta$.
\end{sublemma}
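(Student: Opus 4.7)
The plan is to carry out the familiar cohomological characterization of graded algebra isomorphisms in the present semigroup setting: any $S$-graded algebra isomorphism between $\k^\alpha[S]$ and $\k^\beta[S]$ is forced, component by component, to be multiplication by a nonzero scalar on each line $\k X^s$, and the collection of these scalars is exactly a function $f: S \to \k^*$ whose coboundary $\partial f$ relates the two $2$-cocycles. Conversely, any such $f$ produces an isomorphism by the same formula.

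For the ``only if'' direction, I let $\varphi : \k^\alpha[S] \to \k^\beta[S]$ be an isomorphism of unitary $S$-graded $\k$-algebras. Since each homogeneous component $\k X^s$ is one-dimensional and $\varphi$ is $\k$-linear and $S$-graded, there is a unique $f : S \to \k^*$ with $\varphi(X^s) = f(s) X^s$ for all $s \in S$. Applying $\varphi$ to the identity $X^s \cdot_\alpha X^t = \alpha(s,t) X^{s+t}$ and using multiplicativity of $\varphi$ gives
\[
\alpha(s,t)\, f(s+t)\, X^{s+t} \;=\; \varphi(X^s) \cdot_\beta \varphi(X^t) \;=\; f(s) f(t)\, \beta(s,t)\, X^{s+t},
\]
so $\alpha(s,t) = \partial f(s,t)\, \beta(s,t)$ for all $s,t \in S$, i.e. $\alpha = (\partial f)\beta$.

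For the ``if'' direction, given $f : S \to \k^*$ with $\alpha = (\partial f)\beta$, I define the $\k$-linear map $\varphi : \k^\alpha[S] \to \k^\beta[S]$ by $\varphi(X^s) = f(s) X^s$. This is visibly a bijective $S$-homogeneous $\k$-linear map, and a direct check using the coboundary identity shows $\varphi(X^s \cdot_\alpha X^t) = \varphi(X^s) \cdot_\beta \varphi(X^t)$. The only mild subtlety is that $\varphi$ must send the unit $\alpha(0,0)^{-1} X^0$ of $\k^\alpha[S]$ to the unit $\beta(0,0)^{-1} X^0$ of $\k^\beta[S]$; evaluating $\alpha = (\partial f)\beta$ at $(s,t) = (0,0)$ yields $\alpha(0,0) = f(0)\,\beta(0,0)$, which is exactly what is needed for this compatibility.

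There is no real obstacle here, since the argument is entirely formal once one observes that homogeneous $\k$-linear maps between $S$-graded vector spaces with one-dimensional components are parametrized by functions $S \to \k^*$; the translation between multiplicativity and the coboundary condition is then a one-line calculation in each direction.
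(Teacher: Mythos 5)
Your proof is correct and follows essentially the same route as the paper's: both directions reduce to the observation that a graded $\k$-linear map between $S$-graded spaces with one-dimensional components is given by a scalar on each $\k X^s$, producing the function $f:S\to\k^*$, and then a one-line computation identifies multiplicativity with the coboundary condition. You spell out the compatibility of units a bit more explicitly than the paper, but the argument is the same.
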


\begin{proof} First assume $\alpha,\beta$ are elements of $C^2(S,\k^*)$ such that $\alpha =
(\partial f) \beta$ for some $\partial f \in B^2(S,\k^*)$. The $S$-graded $\k$-linear
isomorphism $F : \k^\alpha[S] \longrightarrow \k^\beta[S]$ mapping $X^s \mapsto f(s)
X^s$ is an isomorphism of unitary $\k$-algebras.
Conversely, suppose $\alpha,\beta$ are elements of $C^2(S,\k^*)$ such that there exists an
isomorphism $F : \k^\alpha[S] \longrightarrow \k^\beta[S]$ of unitary, $S$-graded
$\k$-algebras. Then for each $s\in S$ the element $X^s$ must be an eigenvector of $F$
of non-zero eigenvalue, which we denote by $f(s)$. It is then easily seen that $\alpha =
(\partial f) \beta$. \end{proof}
 
Let $B^2_{\norm}(S,\k^*) = C^2_{\norm}(S,\k^*) \cap B^2(S,\k^*)$. Since constant
$2$-cocycles are $2$-coboundaries and every $2$-cocycle can be written as a constant
cocycle times a normalized one, we get $H^2(S, \k^*) \cong C_\norm^2(S, \k^*) / B_\norm(S,
\k^*)$. This is reflected in the following fact: if the $2$-cocycles in the previous lemma
are normalized, then the $2$-coboundary $\partial f$ is also a normalized $2$-cocycle.

We recall that the semigroup $S$ is said to be totally ordered if the underlying set of
$S$ has a total order with the following property: given $s, s' \in S$ such that $s
\leq s'$, then $s + t \leq s' + t$ for all $t \in S$. The following lemma is easily
proved using a ``leading term'' argument.

\begin{sublemma} -- \label{integral-domain} If $S$ is totally ordered then the algebra
$\k^\alpha[S]$ is an integral domain for all $\alpha \in C^2(S,\k^*)$.
\end{sublemma}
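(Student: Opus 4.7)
The plan is to use a standard leading-term argument, taking advantage of the total order on $S$ together with the cancellativity of $S$ (assumed throughout the section).

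First, let me fix notation. Take two nonzero elements $f, g \in \k^\alpha[S]$. Write them in terms of the basis $\{X^s\}_{s\in S}$ as $f = \sum_{s \in F} f_s X^s$ and $g = \sum_{t \in T} g_t X^t$, where $F, T \subseteq S$ are the finite supports and all coefficients $f_s, g_t$ are nonzero. Since $S$ is totally ordered, the sets $F$ and $T$ have maximal elements; call them $s^\star$ and $t^\star$ respectively. The goal is to show that the coefficient of $X^{s^\star + t^\star}$ in $f \cdot_\alpha g$ is nonzero, which will prove $f \cdot_\alpha g \neq 0$.

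Expanding the product using the definition $X^s \cdot_\alpha X^t = \alpha(s,t) X^{s+t}$, we have
\[
f \cdot_\alpha g = \sum_{s \in F,\, t \in T} f_s g_t \alpha(s,t) X^{s+t}.
\]
The key step is to show that the only pair $(s,t) \in F \times T$ contributing to the coefficient of $X^{s^\star+t^\star}$ is $(s^\star, t^\star)$. Suppose $s + t = s^\star + t^\star$ with $s \in F$, $t \in T$. Then $s \leq s^\star$ and $t \leq t^\star$, and the compatibility of the order with addition gives $s + t \leq s^\star + t \leq s^\star + t^\star$. If $s < s^\star$, cancellativity forces $s + t < s^\star + t$ (equality would contradict $s \neq s^\star$), making the total inequality strict; similarly for $t < t^\star$. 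Hence $(s,t) = (s^\star, t^\star)$ is the only contributing pair.

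It follows that the coefficient of $X^{s^\star + t^\star}$ in $f \cdot_\alpha g$ is $f_{s^\star} g_{t^\star} \alpha(s^\star, t^\star)$, which is a product of three elements of $\k^\ast$ and therefore nonzero. This proves that $\k^\alpha[S]$ has no zero divisors; combined with the existence of a unit noted earlier, it is an integral domain. There is no essential obstacle — the only subtlety is the careful use of cancellativity to promote the weak monotonicity assumed in the order-compatibility axiom to a strict one, which is needed to isolate the leading coefficient.
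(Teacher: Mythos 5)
Your proof is correct and is precisely the ``leading term'' argument the paper alludes to without spelling out; the paper gives no further detail, so there is nothing to compare beyond noting that your careful use of cancellativity to upgrade weak monotonicity to strict monotonicity is exactly the point the phrase ``leading term argument'' glosses over.
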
 

For the purpose of the next lemma we need to recall the definition of a quantum affine
space. Let $l\in\N$ and consider a multiplicatively skew-symmetric matrix
$\q=(q_{ij})_{0\leq i,j \leq l}$ with entries in $\k^*$. The associated quantum affine 
space, denoted by $\k_\q[X_0,\dots,X_l]$, is the $\k$-algebra with generators
$X_0,\dots,X_l$ and relations $X_iX_j=q_{ij}X_jX_i$ for $0 \leq i,j \leq l$. Quantum
affine spaces are iterated Ore extensions of $\k$ and therefore noetherian.

\begin{sublemma} -- \label{noetherian}
If $S$ is finitely generated as a semigroup then $\k^\alpha[S]$ is a noetherian algebra
for all $\alpha \in C^2(S,\k^*)$.
\end{sublemma}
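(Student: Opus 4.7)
The plan is to realize $\k^\alpha[S]$ as a quotient of a quantum affine space, and use the fact recalled just before the statement that such algebras are noetherian.

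Fix a finite generating set $\{s_0, \dots, s_l\}$ of $S$ and define, for $0 \leq i, j \leq l$,
\[
q_{ij} = \frac{\alpha(s_i, s_j)}{\alpha(s_j, s_i)} \in \k^*.
\]
Since $S$ is commutative, $s_i + s_j = s_j + s_i$, so computing in $\k^\alpha[S]$ one gets
\[
X^{s_i} \cdot_\alpha X^{s_j} = \alpha(s_i, s_j) X^{s_i + s_j} \quad \text{and} \quad X^{s_j} \cdot_\alpha X^{s_i} = \alpha(s_j, s_i) X^{s_i + s_j},
\]
so $X^{s_i} \cdot_\alpha X^{s_j} = q_{ij}\, X^{s_j} \cdot_\alpha X^{s_i}$. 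Observe that $q_{ii} = 1$ and $q_{ij}q_{ji} = 1$, hence $\q = (q_{ij})$ is a multiplicatively skew-symmetric matrix.

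By the universal property of the quantum affine space, the assignation $X_i \mapsto X^{s_i}$ extends to a $\k$-algebra morphism
\[
\psi : \k_\q[X_0, \dots, X_l] \longrightarrow \k^\alpha[S].
\]
Next I would verify surjectivity. Every element of $S$ can be written as $s = n_0 s_0 + \dots + n_l s_l$ with $n_i \in \N$, and a straightforward induction using the multiplication rule shows that the ordered product $X^{s_0} \cdot_\alpha \cdots \cdot_\alpha X^{s_0} \cdot_\alpha X^{s_1} \cdot_\alpha \cdots$ (with $n_i$ occurrences of $X^{s_i}$) equals $c\, X^s$ for some nonzero scalar $c \in \k^*$, obtained as a product of values of $\alpha$. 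Hence every basis element $X^s$ lies in the image of $\psi$, and $\psi$ is surjective.

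Since $\k_\q[X_0, \dots, X_l]$ is an iterated Ore extension of $\k$, it is noetherian, and therefore so is its quotient $\k^\alpha[S]$. There is no serious obstacle here: the only point requiring care is the bookkeeping in showing surjectivity, which boils down to the observation that the scalar factors accumulating from the $2$-cocycle $\alpha$ are all invertible.
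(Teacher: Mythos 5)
Your proof is correct and follows essentially the same route as the paper: take a finite generating set, compute $q_{ij} = \alpha(s_i,s_j)/\alpha(s_j,s_i)$, observe the generators $q$-commute, and obtain $\k^\alpha[S]$ as a quotient of the noetherian quantum affine space $\k_\q[X_0,\dots,X_l]$ via $X_i\mapsto X^{s_i}$. You supply a bit more detail on skew-symmetry of $\q$ and on surjectivity, but the argument is the same.
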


\begin{proof} Suppose $S$ is generated as a semigroup by $s_0, \ldots, s_l$ for some $l\in\N$.
The algebra $\k^\alpha[S]$ is generated as a $\k$-algebra by $X^{s_0},\dots,X^{s_l}$, and
for $0 \leq i,j \leq l$ we have $\alpha(s_i,s_j)^{-1} X^{s_i} \cdot_\alpha X^{s_j} =
X^{s_i+s_j} = \alpha(s_j,s_i)^{-1} X^{s_j} \cdot_\alpha X^{s_i}$, so $X^{s_i} \cdot_\alpha
X^{s_j} = \frac{\alpha(s_i, s_j)}{\alpha(s_j, s_i)} X^{s_j} \cdot_\alpha X^{s_i}$. Setting
$q_{ij} = \frac{\alpha(s_i, s_j)}{\alpha(s_j, s_i)}$ for $0 \leq i,j \leq l$ we obtain a
multiplicatively skew-symmetric matrix $\q=(q_{ij})_{i,j}$, and the assignation $X_i
\mapsto X^{s_i}$ defines a surjective morphism of $\k$-algebras $\k_\q[X_0,\dots,X_l]
\longrightarrow \k^\alpha[S]$; since $\k_\q[X_0, \ldots, X_l]$ is noetherian, so is
$\k^\alpha[S]$. \end{proof}

\begin{subremark} -- \rm \label{semigroup-versus-group}
By [CE; Chapter X, Proposition 4.1] the inclusion $i : S \to G$ induces an isomorphism
$i^*: H^2(G,\k^*) \to H^2(S,\k^*)$. Setting $\iota = i \times i : S \times S \to G
\times G$, this result implies that given $\alpha \in C^2(S,\k^*)$ one can always find
a 2-cocycle $\beta \in C^2(G,\k^*)$ such that $\alpha$ and $\beta \circ \iota$ are
cohomologous. Hence there is a chain of $\k$-algebra morphisms
  \[
    \k^\alpha[S] \stackrel{\cong}{\to} \k^{\beta \circ \iota}[S] \hookrightarrow
    \k^\beta[G],
  \]
with the last one given by the assignation $X^s \mapsto X^{i(s)}$. From this we deduce
that every twist of $\k[S]$ by a 2-cocycle is isomorphic to a $G$-graded subalgebra of a
twist of $\k[G]$ by a 2-cocycle. This construction works for any monoid that
embeds injectively in its fraction group. We will give an explicit proof of this for
a special class of semigroups in the next section, see Proposition
\ref{full-affine-embedding}.
\end{subremark}

\subsection{Twisted affine semigroup algebras.} \label{ss-tasa}
We now introduce our main object of interest, twisted affine semigroup algebras. These are
twisted semigroup algebras where $S$ is an \emph{affine} semigroup. We start by recalling
some facts on affine semigroups.

\begin{subdefinition} -- An \emph{affine semigroup} is a finitely generated semigroup $S$
	which is isomorphic as a semigroup to a sub-semigroup of $\Z^l$ for some $l \geq
	0$. An affine semigroup $S$ is \emph{positive} if it is isomorphic to a
	sub-semigroup of $\N^l$ for some $l \geq 0$.
\end{subdefinition}

Let $S$ be an affine semigroup. By definition there exist $l\in\N$ and an injective
morphism $i: S \longrightarrow \Z^l$ of semigroups, so $S$ is abelian and cancellative.
Its group of fractions $G$ identifies with a subgroup of $\Z^l$, so it is isomorphic as a
group to $\Z^r$ for some $r \geq 0$.  Clearly such an integer $r$ is independent of the
choice of $l$ and $i$; it is called the \emph{rank} of $S$. An embedding $S
\stackrel{i}{\longrightarrow} \Z^l$ is called a \emph{full embedding} whenever the group
generated by the image of $S$ is $\Z^l$. Clearly any affine semigroup of rank $r$ has a
full embedding in $\Z^r$.

\begin{subdefinition} -- \label{def-tasa}
A twisted affine semigroup algebra is an algebra $\k^\alpha[S]$ where $S$ is an 
affine semigroup and $\alpha$ an element of $C^2_\norm(S,\k^*)$.
\end{subdefinition}

\begin{sublemma} -- \label{integre-et-noetherien}
A twisted affine semigroup $k^\alpha[S]$ is a noetherian integral domain.
\end{sublemma}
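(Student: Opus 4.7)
The plan is to deduce this statement by combining the two preceding lemmas, Lemma~\ref{noetherian} and Lemma~\ref{integral-domain}, which were stated in enough generality to cover the present situation after one small observation about affine semigroups.

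For the noetherian part, there is nothing to do: by the definition of an affine semigroup, $S$ is finitely generated, so Lemma~\ref{noetherian} applies directly and yields that $\k^\alpha[S]$ is noetherian for every $\alpha \in C^2(S,\k^*)$, normalized or not.

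For the integral domain part, I would appeal to Lemma~\ref{integral-domain}; what must be checked is that an affine semigroup carries a total order compatible with its addition. To see this, fix a full embedding $i: S \hookrightarrow \Z^r$, where $r$ is the rank of $S$, as recalled just before the statement. The group $\Z^r$ carries the lexicographic total order, and this order is compatible with addition in the sense that $v \leq w$ implies $v + u \leq w + u$ for every $u \in \Z^r$. Pulling this order back through $i$ endows $S$ with a total order compatible with its semigroup structure. Lemma~\ref{integral-domain} then gives that $\k^\alpha[S]$ is an integral domain.

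There is essentially no obstacle: the substance of both statements is contained in Lemmas~\ref{noetherian} and \ref{integral-domain}, and the only observation needed is that every affine semigroup embeds into some $\Z^r$ that can be totally ordered lexicographically in an addition-compatible way. One could alternatively invoke Remark~\ref{semigroup-versus-group} to embed $\k^\alpha[S]$ into a twist of $\k[\Z^r]$ and argue integrality there, but the lexicographic argument is more direct.
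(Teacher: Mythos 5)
Your proof is correct and follows essentially the same route as the paper: embed $S$ into $\Z^r$, pull back the lexicographic order to get a compatible total order, and apply Lemma~\ref{integral-domain}, while Lemma~\ref{noetherian} gives the noetherian property directly from finite generation.
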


\begin{proof} Suppose the rank of $S$ is $r$ and fix an embedding $S \to \Z^r$. We can
endow $S$ with a total order by pulling back the lexicographic order of $\Z^r$; by Lemma
\ref{integral-domain} $\k^\alpha[S]$ is an integral domain. Since $S$ is finitely
generated, Lemma \ref{noetherian} implies $\k^\alpha[S]$ is noetherian. \end{proof}

\begin{subremark} -- \label{rmk-grading-tasa} \rm
Let $S$ be an affine semigroup and let $\alpha \in C^2_\norm(S,\k^*)$. 
Definition \ref{def-tasa} associates to the pair $(S,\alpha)$ the twisted affine semigroup
algebra $\k^\alpha[S]$. As discussed in the introduction to this section, $\k^\alpha[S]$
has a natural $G$-grading where $G$ is the group of fractions of $S$. Fixing an
isomorphism $G \cong \Z^r$, with $r$ the rank of $S$, we obtain a full embedding $\iota: S
\to \Z^r$. From this point on we assume that any affine semigroup comes with one such
embedding, so given any $\alpha \in C^2_\norm(S,\k^*)$ we will be in
position to associate to the triple $(S,\iota,\alpha)$ the $\Z^r$-graded twisted affine
semigroup algebra $\k^\alpha[S]$. The corresponding grading will be called the grading of
$\k^\alpha[S]$ associated to $\iota$, or simply the natural grading.
\end{subremark}

We now describe a way to produce twisted affine semigroup algebras as subalgebras of
quantum tori. Fix $l\in\N$ and a skew-symmetric matrix $\q=(q_{ij})_{0\le i,j \le l}$ with
entries in $\k^*$. Consider the associated quantum affine space $\k_\q[X_0,\dots,X_l]$.
As already noted this is a noetherian integral domain and the generators $X_0,\dots,X_l$
are normal regular, so we may form the localization of $\k_\q[X_0,\dots,X_l]$ at the
multiplicative set generated by $X_0,\dots,X_l$ which we denote by $\k_\q[X_0^{\pm
1},\dots,X_l^{\pm 1}]$ and call the \emph{quantum torus} associated to $l$ and $\q$. In
this context, we fix the following notation: for $s=(s_0,\dots,s_l) \in\Z^{l+1}$ we write
$X^s$ for $X_0^{s_0} \cdots X_l^{s_l} \in \k_\q[X_0^{\pm 1},\dots,X_l^{\pm 1}]$. Clearly
the set $\{X^s \mid s\in\Z^{l+1}\}$ is a basis of the $\k$-vector space $\k_\q[X_0^{\pm
1},\dots,X_l^{\pm 1}]$. We consider two gradings on this algebra: the first one is a
$\Z^{l+1}$-grading obtained by assigning degree $e_i$ to $X_i$ for all $0 \le i \le l$,
where $\{e_0,\cdots,e_l\}$ is the canonical basis of $\Z^{l+1}$. The second is a
$\Z$-grading obtained by assigning degree $1$ to each $X_i$. 

Let $S$ be any finitely generated sub-semigroup of $\Z^{l+1}$. It is clear that the
$\k$-subspace of $\k_\q[X_0^{\pm 1},\dots,X_l^{\pm 1}]$ generated by $\{X^s \mid s\in S\}$
is a $\k$-subalgebra, which we denote by $\k_\q[S]$; this subalgebra inherits both the
$\Z^{r+1}$ and the $\Z$-grading of the ambient quantum torus. Notice that for all $s,t \in
S$ there exists $\alpha(s,t) \in \k^*$ such that $X^s X^t = \alpha(s,t) X^{s+t}$, so
associated to this algebra we obtain a map $\alpha: S \times S \longrightarrow \k^*$. It
follows from the associativity of the product of $\k_\q[S]$ that $\alpha$ is
a $2$-cocycle over $S$ with coefficients in $\k^*$ and so $\k_\q[S]$ is isomorphic to
$\k^\alpha[S]$, that is $\k_\q[S]$ is a twisted affine semigroup algebra. The next
proposition shows that all twisted affine semigroup algebras arise this way. 

\begin{subproposition} -- \label{full-affine-embedding} 
Let $S$ be an affine semigroup, $\alpha \in C^2_\norm(S,\k^*)$, and $\iota: S
\longrightarrow \Z^{n+1}$ a full embedding of $S$. There exists a multiplicatively
skew-symmetric matrix $\q=(q_{ij})_{0 \le i,j \le n}$ and an injective morphism of
$\k$-algebras
\[
\begin{array}{ccrcl}
 & & \k^\alpha[S] & \longrightarrow & k_\q[X_0^{\pm 1}, \ldots, X_n^{\pm 1}] \cr 
\end{array}
\]
which for every $s \in S$ sends $X^s$ to a nonzero scalar multiple of $X^{\iota(s)}$, and
whose image is $\k_\q[\iota(S)]$.
\end{subproposition}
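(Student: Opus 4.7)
The plan is to first lift the cocycle $\alpha$ on $S$ to a cocycle $\beta$ on the ambient group $\Z^{n+1}$, then identify $\k^\beta[\Z^{n+1}]$ with a quantum torus, and finally obtain the desired map as the composition with the natural inclusion $\k^\alpha[S] \hookrightarrow \k^\beta[\Z^{n+1}]$.

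First I would invoke Remark \ref{semigroup-versus-group}: since $\iota$ is full, the group of fractions of $S$ is $\Z^{n+1}$, and [CE;~Chap.~X, Prop.~4.1] gives an isomorphism $H^2(\Z^{n+1},\k^*) \cong H^2(S,\k^*)$ induced by restriction. Pick $\beta \in C^2(\Z^{n+1},\k^*)$ such that $\beta \circ(\iota\times\iota)$ is cohomologous to $\alpha$; after normalizing $\beta$ and the intertwining function $f:S \to \k^*$, the lemma on normalized coboundaries produces an $S$-graded $\k$-algebra isomorphism $F:\k^\alpha[S] \to \k^{\beta\circ(\iota\times\iota)}[S]$ sending $X^s$ to $f(s)X^s$. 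Composing with the natural inclusion $\k^{\beta\circ(\iota\times\iota)}[S] \hookrightarrow \k^\beta[\Z^{n+1}]$, which sends $X^s$ to $X^{\iota(s)}$, yields an injective $\k$-algebra morphism $\k^\alpha[S] \hookrightarrow \k^\beta[\Z^{n+1}]$.

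Next I would identify $\k^\beta[\Z^{n+1}]$ with a quantum torus. Let $\{e_0,\dots,e_n\}$ be the standard basis of $\Z^{n+1}$ and set $Y_i := X^{e_i}$. The cocycle identity implies that $X^s \cdot_\beta X^{-s}$ is a nonzero scalar multiple of $X^0 = 1$, so each $Y_i$ is invertible in $\k^\beta[\Z^{n+1}]$. Setting $q_{ij} = \beta(e_i,e_j)/\beta(e_j,e_i)$ one gets a multiplicatively skew-symmetric matrix $\q=(q_{ij})$, and a direct computation gives $Y_iY_j = q_{ij}Y_jY_i$. Hence the assignment $X_i \mapsto Y_i$ extends to a $\k$-algebra morphism
\[
\Phi : \k_\q[X_0^{\pm 1},\dots,X_n^{\pm 1}] \longrightarrow \k^\beta[\Z^{n+1}]
\]
that is $\Z^{n+1}$-graded and surjective; since the source and the target both have one-dimensional homogeneous component in every degree of $\Z^{n+1}$, the map $\Phi$ is an isomorphism.

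Composing $F$, the inclusion, and $\Phi^{-1}$ gives the desired injective morphism $\k^\alpha[S] \to \k_\q[X_0^{\pm 1},\dots,X_n^{\pm 1}]$. By construction $X^s$ is sent to $f(s)\,\Phi^{-1}(X^{\iota(s)})$; iterating the relation $Y_iY_j = \beta(e_i,e_j)X^{e_i+e_j}/\beta(e_j,e_i)\cdots$ shows that $\Phi^{-1}(X^{\iota(s)})$ is a nonzero scalar multiple of the monomial $X^{\iota(s)} = X_0^{\iota(s)_0}\cdots X_n^{\iota(s)_n}$ of the quantum torus, so the image of $X^s$ is a nonzero scalar multiple of $X^{\iota(s)}$. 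The image of the whole map is therefore the $\k$-span of $\{X^{\iota(s)} \mid s\in S\}$, which is precisely $\k_\q[\iota(S)]$ as defined before the statement. The only delicate point is tracking the scalars produced by the various cocycle evaluations and ordering conventions of the monomials $X^{\iota(s)}$, but these are all nonzero and absorbable into the factor $f(s)$, so they do not affect the conclusion.
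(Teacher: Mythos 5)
Your proof is correct, but it takes a genuinely different route from the one in the paper. The paper proceeds from the inside out: it localizes $A=\k^\alpha[S]$ at the multiplicative Ore set generated by all the normal regular elements $X^s$, observes that the resulting algebra $A^\circ$ is $\Z^{n+1}$-graded with one-dimensional homogeneous components, constructs generators $Y_i = X^{s_i}(X^{t_i})^{-1}$ with $\iota(s_i)-\iota(t_i)=e_i$, reads off the matrix $\q$ from the commutation of the $Y_i$, and identifies $A^\circ$ with the quantum torus. You instead proceed from the outside in: you invoke the isomorphism $H^2(\Z^{n+1},\k^*)\cong H^2(S,\k^*)$ of [CE;~Chap.~X, Prop.~4.1] to extend $\alpha$ to a cocycle $\beta$ on $\Z^{n+1}$ up to coboundary, identify $\k^\beta[\Z^{n+1}]$ directly as a quantum torus, and restrict. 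This is exactly the argument sketched in Remark~\ref{semigroup-versus-group}, which the paper explicitly sets aside in order to give a self-contained ``explicit proof'' in this proposition. Your route is shorter and more conceptual but leans on the nontrivial external cohomological input from Cartan--Eilenberg; the paper's route is more elementary, independent of that input, and has the practical side benefit of exhibiting the quantum torus as an explicit left-and-right localization of $\k^\alpha[S]$, a fact that gets reused later (for instance in item~3 of Proposition~\ref{decomposition}). Both establish the statement correctly.
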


\begin{proof} 
Since $\iota$ is a full embedding, the rank of $S$ is $n+1$. Put $A=\k^\alpha[S]$.
Clearly for every $s \in S$ the element $X^s$ is normal, so we may consider the
localization of $A$ at the multiplicative set generated by all these elements, which we
denote by $A^\circ$. As discussed before $A$ has a $\Z^{n+1}$ grading induced by $\iota$,
and since we have only inverted homogeneous elements, so does $A^\circ$. By Lemma
\ref{integre-et-noetherien} every element we have inverted is normal regular, so the
natural map $A \to A^\circ$ is an injective morphism of $\Z^{n+1}$-graded $\k$-algebras.
We will finish the proof by showing that $A^\circ$ is isomorphic to a quantum torus in
$n+1$-variables.

Given $u \in \Z^{n+1}$, the homogeneous component $A^\circ_u$ is generated by all
monomials of the form $X^s(X^t)^{-1}$ with $\iota(s) - \iota(t) = u$; notice that there
always exist $s,t \in S$ with this property since $\iota$ is a full embedding. A simple
algebraic manipulation shows that any two such monomials are scalar multiples, so $\dim_\k
A^\circ_u = 1$ for all $u \in \Z^{n+1}$.

For each element $e_i$ of the canonical basis of $\Z^{n+1}$ we choose elements $s_i, t_i
\in S$ such that $\iota(s_i)-\iota(t_i)=e_i$, and write $Y_i:= X^{s_i}(X^{t_i})^{-1}$ for
$0 \leq i \leq n$. Since the elements $X^s$ commute up to nonzero scalars with each other,
the same must be true of the $Y_i$, so there exist $q_{ij} \in\k^*$ such that $Y_i Y_j =
q_{ij} Y_j Y_i$; since $A^\circ$ is an integral domain, $\q = (q_{ij})$ is a
multiplicatively skew-symmetric matrix. In addition, ordered monomials in $Y_0,\dots,Y_n$
with powers in $\Z$ form a basis of homogeneous elements for $A^\circ$, so there is a
$\Z^{n+1}$-graded $\k$-algebra morphism $\psi: \k_\q[X_0^{\pm 1},\cdots,X_n^{\pm 1}] \to
A^\circ$ such that $\psi(X_i) = Y_i$ for each $0 \leq i \leq n$; since both algebras have
one-dimensional homogeneous components, $\psi$ is an isomorphism, and clearly
$\psi(\k_\q[\iota(S)]) = A$. \end{proof}

Thanks to Proposition \ref{full-affine-embedding}, in order to study twisted affine
semigroup algebras we can restrict to the following setting: we start with $n\in\N$, a
finitely generated subsemigroup $S$ of $\Z^{n+1}$
and a multiplicatively skew-symmetric matrix $\q=(q_{ij})_{0 \le i,j \le n}$ with entries
in $\k^*$. We then consider the $\k$-subalgebra $\k_\q[S]$ of $\k_\q[X_0^{\pm
1},\cdots,X_n^{\pm 1}]$, equipped with the $\Z^{n+1}$-grading it inherits from
$\k_\q[X_0^{\pm 1},\cdots,X_n^{\pm 1}]$.

\begin{subproposition} -- \label{tasa-est-un-zt}
In the setting of the last paragraph, the $\k$-algebra $\k_\q[S]$ is a left twist of
$\k[S]$ over $\Z^{n+1}$.
\end{subproposition}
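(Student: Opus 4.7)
The plan is to construct an explicit normalized left twisting system $\tau$ on $\k[S]$ over $\Z^{n+1}$ and check directly that ${}^\tau \k[S] \cong \k_\q[S]$. The key observation is that the 2-cocycle on $S$ realized by $\k_\q[S]$ admits a natural extension to all of $\Z^{n+1} \times \Z^{n+1}$, and this extension provides the scalars defining $\tau$.

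First, I would make the product in $\k_\q[S]$ explicit. Using the relations $X_i X_j = q_{ij} X_j X_i$ and writing $X^s = X_0^{s_0} \cdots X_n^{s_n}$, one obtains by induction
\[
X^s X^t = \alpha(s,t) X^{s+t}, \qquad \alpha(s,t) := \prod_{i>j} q_{ij}^{s_i t_j},
\]
and the formula for $\alpha$ makes sense for every $s, t \in \Z^{n+1}$. For each $g \in \Z^{n+1}$, define $\tau_g : \k[S] \to \k[S]$ on the homogeneous basis by $\tau_g(X^s) = \alpha(s,g) X^s$ and extend $\k$-linearly. Clearly $\tau_g$ is a graded $\k$-linear automorphism with inverse $X^s \mapsto \alpha(s,g)^{-1} X^s$, and $\tau_0 = \id$ so the family is normalized.

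Next, I would verify the left twisting system identity of Definition \ref{LTS}. Taking homogeneous elements $X^s, X^{s'} \in \k[S]$ of degrees $s, s'$ and arbitrary $g', g'' \in \Z^{n+1}$, and using that the original product in $\k[S]$ is commutative with $X^s X^{s'} = X^{s+s'}$, one computes
\[
\tau_{g''}\!\big(\tau_{g'}(X^s)\, X^{s'}\big) = \alpha(s,g')\,\alpha(s+s',g'')\, X^{s+s'},
\]
\[
\tau_{g'+g''}(X^s)\, \tau_{g''}(X^{s'}) = \alpha(s,g'+g'')\,\alpha(s',g'')\, X^{s+s'}.
\]
Using the explicit product formula defining $\alpha$, both sides collapse to $\prod_{i>j} q_{ij}^{s_i g'_j + s_i g''_j + s'_i g''_j}\, X^{s+s'}$, so the identity holds.

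Finally, by Theorem \ref{twisted-algebras}, ${}^\tau \k[S]$ is the graded $\k$-vector space $\k[S]$ equipped with the product $X^s \circ X^{s'} = \tau_{s'}(X^s) \cdot X^{s'} = \alpha(s,s')\, X^{s+s'}$, which is exactly the multiplication of $\k_\q[S]$ computed in the first step; hence the identity on the underlying graded vector space yields a $\Z^{n+1}$-graded $\k$-algebra isomorphism ${}^\tau \k[S] \xrightarrow{\sim} \k_\q[S]$. There is no serious obstacle: the only nontrivial point is guessing the correct formula for $\alpha$ and checking the twisting identity, which is the routine manipulation displayed above.
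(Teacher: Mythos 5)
Your proof is correct, and it is at heart the same argument the paper makes, just carried out directly rather than through intermediaries. The paper first observes (appealing to [Z; section~6]) that the Laurent polynomial ring $\k[X_0^{\pm 1},\ldots,X_n^{\pm 1}]$ twists to the quantum torus $\k_\q[X_0^{\pm 1},\ldots,X_n^{\pm 1}]$ via precisely the scalar twisting system $\tau_g(X^s)=\alpha(s,g)X^s$ you write down, and then applies Proposition~\ref{Z-twist-on-subalgebras} to the $\tau$-stable subalgebra $\k[S]$ to conclude. You instead define the restricted twisting system on $\k[S]$ directly and re-verify the twisting identity by hand. The two routes buy slightly different things: the paper's modular version makes it transparent that the twist is compatible with the embedding of $\k_\q[S]$ into the quantum torus (which is used later, e.g. in Proposition~\ref{decomposition} and Corollary~\ref{corollaire-OM}), while your version is self-contained and shows explicitly what the cocycle $\alpha(s,t)=\prod_{i>j}q_{ij}^{s_it_j}$ looks like. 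One small remark on bookkeeping: in Definition~\ref{LTS} the index $g'$ of $\tau_{g'}$ is constrained to equal $\deg a'$, whereas you verify the identity for arbitrary $g'$; this is harmless (and in fact shows a stronger statement), but it is worth noting that the definition only requires the case $g'=s'$.
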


\begin{proof}  
Using a reasoning similar to the one found in [Z; section 6] to study twists of quantum
affine spaces, one can prove that the $\Z^{n+1}$-graded algebra $\k_\q[X_0^{\pm 1},
\ldots, X_n^{\pm 1}]$ is a left twist of the Laurent polynomial algebra $\k[X_0^{\pm 1},
\ldots, X_n^{\pm 1}]$. The result follows by applying Proposition
\ref{Z-twist-on-subalgebras} to the subalgebra $\k[S]$ of $\k[X_0^{\pm 1}, \ldots,
X_n^{\pm 1}]$. \end{proof}

We are now in position to study the homological regularity properties of twisted affine
semigroup algebras for positive affine semigroups. This is done in Theorems
\ref{thm-AS-tasa} and \ref{thm-dc-tasa}. For both statements we fix an affine semigroup
$S$ with an embedding of semigroups $\iota : S \longrightarrow \Z^{n+1}$ such that
$\iota(S) \subseteq \N^{n+1}$. As specified in remark \ref{rmk-grading-tasa}, for
any $\alpha\in C^2(S,\k^*)$ the algebra $\k^\alpha[S]$ comes equipped with a natural
$\Z^{n+1}$-grading and is connected with respect to this grading, so we are in the context
of the introduction to section \ref{section-regularity-grading-twist}.

\begin{subtheorem} -- \label{thm-AS-tasa} 
With the previous notation and for all $\alpha \in C^2_\norm(S,\k^*)$, the following
holds:
\begin{enumerate}
	\item $\k^\alpha[S]$ is AS-Cohen-Macaulay if and only if $\k[S]$ is
		AS-Cohen-Macaulay.
	\item $\k^\alpha[S]$ is AS-Gorenstein if and only if $\k[S]$ is AS-Gorenstein.
	\item $\k^\alpha[S]$ is AS-regular if and only if $\k[S]$ is AS-regular.
\end{enumerate}
\end{subtheorem}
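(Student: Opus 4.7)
My plan is to reduce this theorem to the Zhang twist invariance result established in Theorem \ref{stabilite-par-twist}. All three statements have the same shape, so I will treat them uniformly, noting only that the hypotheses of Theorem \ref{stabilite-par-twist} require us to be in the setting of a connected $\Z^{n+1}$-graded $\k$-algebra.

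First I would use the full embedding $\iota : S \hookrightarrow \Z^{n+1}$ to identify $\k^\alpha[S]$ with a concrete $\Z^{n+1}$-graded subalgebra of a quantum torus. By Proposition \ref{full-affine-embedding}, there exists a multiplicatively skew-symmetric matrix $\q$ such that $\k^\alpha[S]$ is isomorphic, as a $\Z^{n+1}$-graded $\k$-algebra, to $\k_\q[\iota(S)]$. Analogously, taking the trivial cocycle gives an isomorphism $\k[S] \cong \k[\iota(S)]$. Since $\iota(S) \subseteq \N^{n+1}$ by hypothesis, both $\k_\q[\iota(S)]$ and $\k[\iota(S)]$ have support inside $\N^{n+1}$, their degree-$0$ component is $\k$, and they are connected noetherian $\Z^{n+1}$-graded $\k$-algebras, placing us exactly in the setup of Section \ref{section-regularity-grading-twist}.

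Next I would invoke Proposition \ref{tasa-est-un-zt}, which asserts that $\k_\q[\iota(S)]$ is a left Zhang twist of $\k[\iota(S)]$ over $\Z^{n+1}$. If the resulting twisting system is not already normalized, then by the remark following Definition \ref{LTS} and [Z; Proposition 2.4] we may replace it by a normalized one yielding the same twisted algebra, so without loss of generality the twisting system $\tau$ is normalized. Therefore there is a normalized left twisting system $\tau$ on $\k[\iota(S)]$ over $\Z^{n+1}$ with ${}^{\tau}\!\k[\iota(S)] \cong \k_\q[\iota(S)] \cong \k^\alpha[S]$.

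Finally, Theorem \ref{stabilite-par-twist} applied to the connected $\Z^{n+1}$-graded algebra $\k[\iota(S)]$ and its normalized twist ${}^{\tau}\!\k[\iota(S)]$ gives that the AS-Cohen-Macaulay, AS-Gorenstein, and AS-regular properties transfer in both directions. Combined with the $\Z^{n+1}$-graded algebra isomorphisms $\k[S]\cong\k[\iota(S)]$ and $\k^\alpha[S]\cong{}^{\tau}\!\k[\iota(S)]$, this yields each of the three equivalences. The main point requiring care is simply the bookkeeping between the two ways of viewing $\k^\alpha[S]$ (as an abstract cocycle twist versus as a graded subalgebra of a quantum torus) and checking that all relevant $\Z^{n+1}$-gradings are compatible; once this is done, Theorem \ref{stabilite-par-twist} does the real work.
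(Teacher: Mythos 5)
Your proof is correct and follows the same route as the paper: realize $\k^\alpha[S]$ as a Zhang twist of $\k[S]$ over $\Z^{n+1}$ (via Proposition \ref{full-affine-embedding} and Proposition \ref{tasa-est-un-zt}) and then invoke Theorem \ref{stabilite-par-twist}. You simply make the bookkeeping with the embedding $\iota$ and the normalization of the twisting system explicit, which the paper leaves implicit.
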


\begin{proof} Since $\k^\alpha[S]$ is a twist of $\k[S]$, the result follows at once from
Theorem \ref{stabilite-par-twist}. \end{proof}

\begin{subremark} -- \rm 
For a precise account on the regularity of affine semigroup algebras in the commutative
setting, the reader is referred to [BH; Chap. 6], in particular to statements 6.3.5 and
6.3.8.
\end{subremark}

\begin{subtheorem} -- \label{thm-dc-tasa}
In the above notation and for all $\alpha \in C^2_\norm(S,\k^*)$, the algebra
$\k^\alpha[S]$ has a balanced dualizing complex.
\end{subtheorem}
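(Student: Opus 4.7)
The plan is to invoke the twist-invariance of having a balanced dualizing complex (Proposition \ref{tranfert-bdc}(3)) to reduce to the classical, commutative algebra $\k[S]$, and then handle that case using Van den Bergh's criterion together with standard commutative algebra.

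By Proposition \ref{tasa-est-un-zt}, $\k^\alpha[S]$ is isomorphic to a left Zhang twist ${}^\tau\k[S]$ of $\k[S]$ over $\Z^{n+1}$. Since $\iota(S) \subseteq \N^{n+1}$ and $S$ is finitely generated, $\k[S]$ is a commutative, noetherian, connected $\N^{n+1}$-graded $\k$-algebra (Lemma \ref{noetherian} applied to $\alpha = \mathbf{1}$); the same then holds for $\k^\alpha[S]$. We are therefore in the setting of Section \ref{section-regularity-grading-twist}, so Proposition \ref{tranfert-bdc}(3) reduces the theorem to showing that $\k[S]$ admits a balanced dualizing complex.

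To do this I would apply Van den Bergh's criterion, Theorem \ref{critere-vdb}. Commutativity of $\k[S]$ collapses the two-sided hypothesis into a single check. Finite local cohomological dimension follows from $\k[S]$ being a finitely generated commutative $\k$-algebra: it has finite Krull dimension $d$, and Grothendieck's vanishing theorem gives $\lcd_{\m,\Z}(\k[S]) \leq d$. For property $\chi$, I would invoke Proposition \ref{CNS-chi}, which in the presence of finite local cohomological dimension reduces the verification to showing that $\EXT^i_{\k[S]}(\k, \k[S])$ is finite-dimensional over $\k$ for every $i \geq 0$. By Lemma \ref{EXT-Ext}, this $\EXT$ equals the ordinary $\Ext$. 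The trivial module $\k$ admits a resolution by finitely generated graded free $\k[S]$-modules (by noetherianity together with graded Nakayama); applying $\Hom_{\k[S]}(-, \k[S])$ yields a complex of finitely generated $\k[S]$-modules, so each $\Ext^i_{\k[S]}(\k, \k[S])$ is finitely generated over $\k[S]$. Since $\m$ annihilates this module, it is in fact finitely generated over $\k[S]/\m = \k$, i.e.\ finite-dimensional, as required.

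The principal obstacle is organizational rather than technical: once Propositions \ref{tasa-est-un-zt} and \ref{tranfert-bdc} are in hand, everything reduces to elementary commutative algebra via Van den Bergh's criterion. The one subtlety worth noting is that finite local cohomological dimension must be established \emph{before} invoking Proposition \ref{CNS-chi}, and that the finite-dimensionality argument for $\EXT^i(\k, \k[S])$ uses both commutativity (so that $\Ext^i$ is a $\k[S]$-module annihilated by $\m$) and noetherianity (so that cohomology of a complex of finitely generated modules is finitely generated); neither would be available if we tried to work directly with $\k^\alpha[S]$, which is why the passage through the twist is essential.
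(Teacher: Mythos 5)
Your proposal is correct and follows the paper's overall strategy: reduce to the commutative algebra $\k[S]$ via twist-invariance (Propositions \ref{tasa-est-un-zt} and \ref{tranfert-bdc}), then apply Van den Bergh's criterion (Theorem \ref{critere-vdb}). The one genuine divergence is how property $\chi$ is established for $\k[S]$: the paper cites [AZ; Prop.\ 3.11] for the fact that commutative noetherian connected graded algebras satisfy $\chi$, whereas you derive it from Proposition \ref{CNS-chi} (correctly noting the prerequisite of finite local cohomological dimension) by showing $\Ext^i_{\k[S]}(\k,\k[S])$ is finitely generated over $\k[S]$ and killed by $\m$, hence finite-dimensional over $\k$. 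Your route is more self-contained and makes explicit which features of commutativity and noetherianity are actually used; the paper's is shorter by outsourcing the classical case to the literature. One minor point: Grothendieck's vanishing theorem controls the \emph{ungraded} local cohomological dimension, so one also needs Corollary \ref{lcd-group-change} to infer that $\lcd_{\m,\Z}(\k[S])$ is finite --- the paper cites this explicitly, and your proposal should too, though the argument is otherwise sound.
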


\begin{proof} Notice first that $\k[S]$ is a commutative noetherian connected $\N$-graded
$\k$-algebra of finite Krull dimension. By Grothendieck's vanishing theorem [BS; Theorem 
6.1.12] and Corollary \ref{lcd-group-change}, the local cohomological dimension of $\k[S]$ is
finite, and by [AZ; Prop. 3.11] it has property $\chi$. Thus Theorem \ref{critere-vdb} 
states that $\k[S]$ has a balanced dualizing complex. Since $\k^\alpha[S]$
is a twist of $\k[S]$, the result follows from Proposition \ref{tranfert-bdc}. \end{proof}

We finish this subsection by establishing certain ring theoretic properties of twisted
affine semigroup algebras. We show that, under a mild hypothesis on the underlying
semigroup, a twisted affine semigroup algebra can be written as the intersection of a
finite family of sub-algebras of its skew-field of fractions, each isomorphic to a twisted
semigroup algebra with underlying semigroup $\Z^n \oplus \N$, see Proposition
\ref{decomposition}. As a consequence, we get a characterization of those twisted affine
semigroup algebras which are maximal orders in their skew-field. For this we need to
recall some geometric information on affine semigroups, which we quote without proof. The
interested reader will find a more thorough treatment of the subject in [F; Chapter 1] or
[BH; Chapter 6].\\

Let $S$ be a finitely generated sub-semigroup of $\Z^{n+1}$ for some $n\geq 0$, and assume
that $S$ generates $\Z^{n+1}$ as a group. We may see $S$ as a subset of $\RR^{n+1}$
and consider $\RR_+ S \subset \RR^{n+1}$, the set formed by $\RR$-linear combinations of
elements of $S$ with coefficients in the set of non-negative real numbers, called
the \emph{cone} generated by $S$ in $\RR^{n+1}$. Notice that any generating set of the
semigroup $S$ generates $\RR^{n+1}$ as an $\RR$-$\k$-vector space and that, if
$\{s_1,\dots,s_l\}$ is such a generating set, then $\RR_+S = \{r_1s_1 + \ldots + r_ls_l
\mid (r_1,\dots,r_l)\in\RR_+^l\}$. In particular, $\RR_+S$ is a convex polyhedral cone in
the sense of [F; Section 1.2].

A \emph{supporting hyperplane} of $\RR_+S$ is a hyperplane that divides $\RR^{n+1}$ in two
connected components such that $S$, and hence $\RR_+ S$, is contained in the closure of
one of them. A \emph{face} of $\RR_+ S$ is the intersection of $\RR_+S$ with a supporting
hyperplane; a \emph{facet} is a face of codimension one, i.e. a face that generates a
hyperplane of $\RR^{n+1}$. We write $\tau < \RR_+ S$ if $\tau$ is a facet of $\RR_+S$, and 
$H_\tau$ for the unique supporting hyperplane which contains $\tau$.

Let $\tau < \RR_+ S$. We denote by $D_\tau$ the closure of the connected component of
$\RR^{n+1} \setminus H_\tau$ containing $S$. By [F; Section 1.2, Point (8)], if $\RR_+S
\neq \RR^{n+1}$ then
\[
\displaystyle \RR_+S = \bigcap_{\tau < \RR_+ S} D_\tau.
\]
For any facet $\tau$ of $\RR_+S$ let $S_\tau=D_\tau \cap \Z^{n+1}$. This is clearly a
sub-semigroup of $\Z^{n+1}$, and we always have
\[
S \subseteq \RR_+S \cap \Z^{n+1} = \left(\bigcap_{\tau < \RR_+ S} D_\tau\right) \cap
\Z^{n+1} = \bigcap_{\tau < \RR_+ S} S_\tau. \qquad \qquad (\ddagger)
\]
However, equality does not hold unless an additional assumption is made on $S$.
\begin{subdefinition} -- 
Let $G$ be the group of fractions of $S$. We say that $S$ is
\emph{normal} if it satisfies the following condition: if $g \in G$ and there exists $p
\in \N^*$ such that $p g \in S$, then $g \in S$.
\end{subdefinition}
With the notation of the previous discussion, the semigroup $S_\tau$ is always normal,
hence so is $\bigcap_{\tau < \RR_+ S} S_\tau$. Thus for $S$ to be equal to this
intersection, it must be normal. The following proposition shows that this condition is
not only necessary but sufficient. Gordan's Lemma [BH; Proposition 6.1.2] states that if
$S$ is normal then $S = \RR_+S \cap \Z^{n+1}$, which combined with $(\ddagger)$ gives the
following result.
\begin{subproposition} -- 
\label{intersection-decomposition}
Let $S$ be a finitely generated subsemigroup of $\Z^{n+1}$ that generates $\Z^{n+1}$ as a
group. If $S$ is normal, then
\[
S = \displaystyle \bigcap_{\tau < \RR_+ S} S_\tau.
\]
\end{subproposition}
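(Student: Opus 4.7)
The plan is essentially to assemble the two ingredients that have just been explicitly quoted: the chain of inclusions $(\ddagger)$, which was obtained from Fulton's description of a rational polyhedral cone as the intersection of the closed half-spaces $D_\tau$ over its facets $\tau$, and Gordan's Lemma ([BH; Proposition 6.1.2]), which asserts that a normal affine subsemigroup of its fraction group equals the intersection of its own rational cone with the lattice.

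More precisely, I would proceed as follows. First, since $S$ is assumed to generate $\Z^{n+1}$ as a group and is finitely generated as a semigroup, the cone $\RR_+ S$ is a rational convex polyhedral cone in $\RR^{n+1}$ and $\Z^{n+1}$ is the group of fractions of $S$; so the setup recalled before the statement applies directly. The inclusion
\[
S \subseteq \bigcap_{\tau < \RR_+ S} S_\tau
\]
and the equality
\[
\bigcap_{\tau < \RR_+ S} S_\tau = \RR_+ S \cap \Z^{n+1}
\]
come from $(\ddagger)$. Here one must handle the possibility that $\RR_+ S = \RR^{n+1}$, in which case no supporting hyperplane exists; but then the empty intersection, read inside $\Z^{n+1}$, equals $\Z^{n+1} = \RR_+ S \cap \Z^{n+1}$, and the formula remains valid.

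Next, I would invoke the normality hypothesis on $S$: by Gordan's Lemma, normality gives the reverse inclusion
\[
\RR_+ S \cap \Z^{n+1} \subseteq S,
\]
i.e., any element of $\Z^{n+1}$ lying in $\RR_+ S$ already belongs to $S$. Combining this with the two displayed relations above, all containments become equalities, and in particular
\[
S = \bigcap_{\tau < \RR_+ S} S_\tau,
\]
which is exactly the claim.

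Since the heavy lifting (the facet decomposition of a rational polyhedral cone, and Gordan's Lemma itself) is supplied by the cited references, there is no real obstacle in the argument. The only point that deserves a quick check in writing is the degenerate case $\RR_+ S = \RR^{n+1}$ mentioned above, to make sure the statement is interpreted coherently when the index set of facets is empty.
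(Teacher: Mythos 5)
Your argument is exactly the paper's: combine the chain of inclusions $(\ddagger)$, which gives $S \subseteq \bigcap_{\tau < \RR_+ S} S_\tau = \RR_+ S \cap \Z^{n+1}$, with Gordan's Lemma [BH; Proposition 6.1.2], which under normality yields $\RR_+ S \cap \Z^{n+1} = S$. Your extra remark about the degenerate case $\RR_+ S = \RR^{n+1}$ (where the facet set is empty) is a sensible addition but does not change the substance.
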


The previous proposition shows that one may recover a normal affine semigroup from the
facets of its cone. The following is a related result that characterizes the semigroup
$S_\tau$.
\begin{subproposition} -- 
\label{structure-s-tau}
Let $S$ be a finitely generated subsemigroup of $\Z^{n+1}$ which generates $\Z^{n+1}$ as a
group. If $\tau$ is any facet of $\RR_+S$, then $S_\tau \cong \Z^n \oplus \N$ as a
semigroup.
\end{subproposition}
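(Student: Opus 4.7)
The plan is to describe $S_\tau$ explicitly as the set of lattice points in a closed rational half-space and then exhibit an explicit $\Z$-basis splitting the boundary from the transverse direction. Since $\RR_+S$ is a polyhedral cone with apex at the origin, every face (in particular every facet) arises from a supporting hyperplane that passes through the origin; so $H_\tau$ is a \emph{linear} hyperplane of $\RR^{n+1}$. Because $\tau$ is a facet of a rational cone spanned by elements of $\Z^{n+1}$, the hyperplane $H_\tau$ is itself rational, i.e.\ it is cut out by a nonzero linear form with rational coefficients. Clearing denominators and dividing by the gcd of the coefficients, I can choose a \emph{primitive} integral functional $\ell : \Z^{n+1} \to \Z$ whose kernel is $H_\tau \cap \Z^{n+1}$; after flipping sign if necessary, $D_\tau = \{x \in \RR^{n+1} \mid \ell(x) \geq 0\}$ since $S \subseteq D_\tau$.

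The hypothesis that $S$ generates $\Z^{n+1}$ as a group is exactly what guarantees that $\ell$ takes value~$1$ somewhere on $\Z^{n+1}$: indeed, if $\ell(\Z^{n+1}) = d\Z$ with $d > 1$, then $\ell$ is not primitive, contradicting our choice. So $\ell : \Z^{n+1} \to \Z$ is surjective, which gives a split short exact sequence
\[
0 \longrightarrow \ker \ell \longrightarrow \Z^{n+1} \stackrel{\ell}{\longrightarrow} \Z \longrightarrow 0.
\]
Pick $v \in \Z^{n+1}$ with $\ell(v)=1$; then $\Z^{n+1} = \ker\ell \oplus \Z v$, and $\ker\ell$ is free of rank $n$ since it is a finitely generated torsion-free abelian group of rank $n$.

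Under this splitting, by the very definition of $S_\tau = D_\tau \cap \Z^{n+1} = \ell^{-1}(\Z_{\geq 0})$, an element $x = y + k v$ with $y \in \ker\ell$ and $k \in \Z$ lies in $S_\tau$ if and only if $\ell(x) = k \geq 0$. Hence $S_\tau$ corresponds to $\ker\ell \oplus \N v \cong \Z^n \oplus \N$ as semigroups, which is the desired isomorphism.

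The only conceptually nontrivial step is step two: the choice of a \emph{primitive} integral functional $\ell$ and the reason that its image is all of $\Z$. Rationality of $H_\tau$ comes for free from the fact that $\tau$ is spanned over $\RR_+$ by finitely many lattice points (some subset of any generating set of $S$), and surjectivity onto $\Z$ uses that $S$, and hence $\Z^{n+1}$, lives inside the half-space $\ell \geq 0$ while $\ell$ is not divisible by any integer $>1$. Everything else is a routine manipulation of the split short exact sequence.
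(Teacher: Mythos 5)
Your proof is correct, but it follows a different route from the paper's. The paper argues structurally: it identifies $S_\tau^* = H_\tau \cap \Z^{n+1}$ as the group of units of $S_\tau$, uses normality of $S_\tau$ to conclude that $\Z^{n+1}/S_\tau^*$ is torsion-free and hence that $S_\tau^*$ is a direct summand, and then recognizes the complement as a normal affine semigroup of rank~$1$ that is not a group, hence $\cong \N$. You instead work with the primitive integral linear form $\ell$ cutting out the half-space $D_\tau$, split $\Z^{n+1} = \ker\ell \oplus \Z v$ via any $v$ with $\ell(v) = 1$, and read off $S_\tau = \ell^{-1}(\N) = \ker\ell \oplus \N v$ directly. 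Your argument is more elementary and self-contained: it never appeals to normality of $S_\tau$ or to the classification of rank-$1$ normal affine semigroups, at the cost of making the (standard) rationality/primitivity reduction explicit. The paper's version is more in line with the structural theory of affine semigroups it is building towards.

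One small inaccuracy in your write-up: you attribute the surjectivity of $\ell$ onto $\Z$ to the hypothesis that $S$ generates $\Z^{n+1}$ as a group, but that surjectivity actually follows purely from your choice of $\ell$ as primitive (a primitive integral functional on $\Z^{n+1}$ is surjective onto $\Z$ by definition, independently of $S$). The hypothesis on $S$ is what ensures $\RR_+S$ is a full-dimensional cone in $\RR^{n+1}$, so that the facet $\tau$ genuinely spans a hyperplane and the supporting half-space $D_\tau$ makes sense; that is where the hypothesis enters. This is a matter of exposition only and does not affect the correctness of the construction.
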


\begin{proof}
Let $\tau < \RR_+S$ and let $S_\tau^*$ be the set of invertible elements of $S_\tau$; 
notice that $S_\tau^* = H_\tau \cap \Z^{n+1}$, and that the inclusion $S_\tau^* \subset
S_\tau$ is strict, since otherwise $S$ would be contained in $H_\tau$. Clearly $S_\tau^*$
is a subgroup of $\Z^{n+1}$, and by [F; 1.2(2)] it generates $H_\tau$ as a $\k$-vector space,
so it is a free abelian group of rank $n$. Since $S_\tau$ is normal and generates
$\Z^{n+1}$, the factor group $\Z^{n+1}/S_\tau^*$ is torsion free and hence $S_\tau^*$ is a
direct summand of $S_\tau$; we fix a complement $C_\tau$. Since $S_\tau$ is normal so is
$C_\tau$, so this last one must be a normal affine semigroup of rank $1$. The result
follows from the fact that any normal affine semigroup of rank $1$ is isomorphic to either
$\N$ or $\Z$, and $C_\tau$ is not a group.
\end{proof}

The next observation will be useful latter.

\begin{subremark} -- \rm
\label{remarque-sur-generateurs}
Let $S$ be a finitely generated subsemigroup of $\Z^{n+1}$ that generates $\Z^{n+1}$ as a
group, with a set of generators $X = \{s_1,\dots,s_l\}$, and let $\tau$ be any facet of
$\RR_+S$. Recall from [F; 1.2 (2)] that $H_\tau$ is generated by $X \cap \tau$, so without 
loss of generality we may assume that $\{s_1, s_2, \ldots, s_n\}$ is a basis of $H_\tau$.
Since $S_\tau = D_\tau \cap \Z^{n+1}$, it 
follows that $\Z s_1 + \dots + \Z s_n + \N s_{n+1} + \dots + \N s_l \subseteq S_\tau$. We 
now prove that, if $S$ is normal, this inclusion is an equality.

Given $1 \leq k \leq l$, set $S_k = \Z s_1 + \dots + \Z s_k + \N s_{k+1} + \dots + \N
s_l$; a simple induction on $k$ shows that if $S$ is normal then so is $S_k$. Let $x \in
S_\tau$. Since $\{s_1, \ldots s_n\}$ is a linearly independent set, and $s_l \notin H_\tau$, 
the set $\{s_1, \ldots, s_n, s_l\}$ is a basis of $\QQ^{n+1}$. Since $x$ has 
integral coordinates, there exist $a_1,\dots,a_n, a_l\in\QQ$ such that $x = a_1 s_1 + 
\dots + a_n s_n + a_l s_l$, and since $x \in D_\tau$ it must be the case that $a_l \geq 0$. 
This implies there exists $c \in\N^*$ such that $cx \in S_n$, and since $S_n$ is normal, 
$x \in S_j$, i.e. $S_\tau \subseteq S_j$.
\end{subremark}

Propositions \ref{intersection-decomposition} and \ref{structure-s-tau} have the following
consequence.

\begin{subproposition} -- \label{decomposition}
Let $S$ be a finitely generated subsemigroup of $\Z^{n+1}$ which generates $\Z^{n+1}$ as a
group, and let $\q=(q_{ij})_{0 \le i,j \le n}$ be any skew-symmetric matrix with entries
in $\k^*$. 
\begin{enumerate}
	\item If  $S$ is normal then $\displaystyle \k_\q[S] = \bigcap_{\tau<\RR_+ S}
		\k_\q[S_\tau] \subseteq \k_\q[\Z^{n+1}]$.
	\item For all $\tau<\RR_+S$, there exists a skew-symmetric $(n+1) \times (n+1)$
		matrix $\q_\tau$ such that $\k_\q[S_\tau]$ is isomorphic to the subalgebra
		$\k_{\q_\tau}[\Z^n \oplus \N]$ of $\k_{\q_\tau}[\Z^{n+1}]$.
	\item If $S$ is normal, then for all $\tau<\RR_+S$ the algebra $\k_\q[S_\tau]$ is
		isomorphic to a left and a right localization of $\k_\q[S]$.
\end{enumerate}
\end{subproposition}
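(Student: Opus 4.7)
The plan is to handle the three parts in order, using the $\Z^{n+1}$-graded structure of everything in sight as the main organizing tool.

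For part (1), I observe that $\k_\q[S]$ and each $\k_\q[S_\tau]$ are $\Z^{n+1}$-graded subalgebras of the $\Z^{n+1}$-graded algebra $\k_\q[\Z^{n+1}]$, whose homogeneous component of degree $u \in \Z^{n+1}$ is the one-dimensional $\k$-vector space $\k X^u$. Hence these subalgebras are characterized by their supports, namely $S$ and $S_\tau$ respectively. An arbitrary intersection of graded subspaces is graded, with support equal to the intersection of the supports. Since $S$ is assumed normal, Proposition~\ref{intersection-decomposition} gives $S = \bigcap_{\tau < \RR_+S} S_\tau$, and part (1) follows.

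For part (2), I do not need $S$ to be normal: the semigroup $S_\tau = D_\tau \cap \Z^{n+1}$ is always normal, finitely generated (by Gordan's lemma applied to the polyhedral half-space $D_\tau$) and of rank $n+1$. By Proposition~\ref{structure-s-tau}, $S_\tau$ is isomorphic as a semigroup to $\Z^n \oplus \N$; fix such an isomorphism and regard it as a full embedding $\iota_\tau : S_\tau \hookrightarrow \Z^{n+1}$ with image $\Z^n \oplus \N$. The commutation relations of $\k_\q[S_\tau]$ are determined by the ambient quantum torus $\k_\q[\Z^{n+1}]$, so, as in the discussion preceding Proposition~\ref{full-affine-embedding}, $\k_\q[S_\tau]$ is a twisted affine semigroup algebra. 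Applying Proposition~\ref{full-affine-embedding} to the triple $(S_\tau,\iota_\tau,\alpha)$ produces a multiplicatively skew-symmetric matrix $\q_\tau$ and an isomorphism of $\k_\q[S_\tau]$ onto $\k_{\q_\tau}[\iota_\tau(S_\tau)] = \k_{\q_\tau}[\Z^n \oplus \N]$ inside $\k_{\q_\tau}[\Z^{n+1}]$.

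For part (3), fix a generating set $\{s_1,\dots,s_l\}$ of $S$ such that $\{s_1,\dots,s_n\}$ is a basis of $H_\tau \cap \Z^{n+1}$, as in Remark~\ref{remarque-sur-generateurs}. Since $S$ is normal, that remark gives $S_\tau = \Z s_1 + \cdots + \Z s_n + \N s_{n+1} + \cdots + \N s_l$. In the quantum torus $\k_\q[\Z^{n+1}]$ any two homogeneous elements commute up to a nonzero scalar, so the same holds inside $\k_\q[S]$: each $X^{s_i}$ is a normal regular element. Therefore the multiplicative set generated by $X^{s_1},\dots,X^{s_n}$ is a left and right Ore set, and the corresponding Ore localization $\k_\q[S][X^{-s_1},\dots,X^{-s_n}]$ embeds into $\k_\q[\Z^{n+1}]$ as a $\Z^{n+1}$-graded subalgebra whose support is $S + \Z s_1 + \cdots + \Z s_n = S_\tau$. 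That subalgebra is precisely $\k_\q[S_\tau]$.

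The only real subtlety I anticipate is in part (3): one must be sure that the localization at the $X^{s_i}$ really has support equal to $S_\tau$ and not merely contained in it. The key point is the second (normality-dependent) half of Remark~\ref{remarque-sur-generateurs}, which identifies $S_\tau$ explicitly as $S + \Z s_1 + \cdots + \Z s_n$; without normality this equality, and hence the localization description, can fail.
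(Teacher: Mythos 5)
Your proof is correct and follows essentially the same route as the paper: part (1) via Proposition~\ref{intersection-decomposition} together with the observation that graded subalgebras of $\k_\q[\Z^{n+1}]$ are determined by their supports, part (2) via Proposition~\ref{structure-s-tau}, and part (3) via the description of $S_\tau$ in Remark~\ref{remarque-sur-generateurs}. The only cosmetic difference is in part (2), where the paper builds the isomorphism $\k_{\q_\tau}[\Z^n\oplus\N]\to\k_\q[S_\tau]$ directly from the chosen semigroup isomorphism rather than rerouting through Proposition~\ref{full-affine-embedding}; both are fine.
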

\begin{proof} Item 1 follows at once from Proposition \ref{intersection-decomposition}.
Let us prove item 2. By Proposition \ref{structure-s-tau}, $S_\tau$ is isomorphic to $\Z^n
\oplus \N$ as semigroup. Fix a semigroup isomorphism $\phi : \Z^n \oplus \N
\longrightarrow S_\tau$ and set $t_i=\phi(e_i)$, where $e_i$ is the $i$-th element of the
canonical basis of $\Z^{n+1}$. For $0 \le i,j \le n$, there is an element $q'_{ij}\in\k^*$
such that the equality $X^{t_i}X^{t_j}=q'_{ij}X^{t_j}X^{t_i}$ holds in $\k_\q[\Z^{n+1}]$,
and $\q_\tau=(q'_{ij})_{0 \le i,j \le n}$ is a skew-symmetric matrix with entries in
$\k^*$. The morphism of $\k$-algebras $\k_{\q_\tau}[\Z^n\oplus\N] \longrightarrow
\k_\q[S_\tau]$  $X_i \mapsto X^{t_i}$, which is evidently an isomorphism.

We now prove item 3. Remark \ref{remarque-sur-generateurs} implies that there is a set of
generators  $\{s_1,\dots,s_l\}$ of $S$ such that $S_\tau = (\Z s_1 + \dots + \Z s_j) + (\N
s_{j+1} + \dots + \N s_l)$ for some integer $j$. It follows immediately that
$\k_\q[S_\tau]$ is the localization of $\k_\q[S]$ at the multiplicative set generated by
$X^{s_1}, \ldots, X^{s_j}$. \end{proof}

By a well-known result the ring $\k[S]$ is normal, i.e. it is an integral domain which is
integrally closed in its field of fractions, if and only if the affine semigroup $S$ is
normal; see [BH; 6.1.4] for a proof. We now show that the result extends to the
noncommutative situation with a suitable generalization of normality. 

Let $R$ be a noetherian integral domain, and let $\Frac(R)$ be its skew-field of
fractions, in the sense of [MR; Chapter 1] or [McCR; Chapter 2, \S 1]. It is then
immediate that $R$ is an order of $\Frac(R)$ as in [MR; chap. I, \S 2]. By definition $R$
is a maximal order of $\Frac(R)$ if it satisfies the following condition: if $T$ is any
subring of $\Frac(R)$ such that $R \subseteq T \subseteq \Frac(R)$ and if there exist $a,b
\in R \setminus \{0\}$ such that $aTb \subseteq R$, then $T = R$. We refer the reader to
[MR; Chapter 1] for a general account on maximal orders. 

For every non-zero ideal $I \subseteq R$ we set ${\cal O}_{l}(I) = \{q \in \Frac(R) \mid qI
\subseteq I \}$ and ${\cal O}_{r}(I) = \{q \in \Frac(R) \mid Iq \subseteq I \}$, called
the left and right order of $I$ in $\Frac(R)$, respectively. By [MR; Chap. I, 3.1], $R$ is
a maximal order of $\Frac(R)$ if and only if ${\cal O}_{l}(I) = {\cal O}_{r}(I) = R$ for
any non-zero ideal $I$ of $R$. We use this criterion to prove the following auxiliary
result.
\begin{sublemma} -- \label{lemme-OM}
	Let $R$ be a noetherian integral domain and $\I$ a nonempty set of left and right
	Ore sets of $R$. Suppose that for every $O \in \I$ the left and right localization
	$R[O^{-1}]$ is a maximal order in $\Frac(R)$. If $R = \bigcap_{O \in \I}
	R[O^{-1}]$, then $R$ is a maximal order in $\Frac(R)$.
\end{sublemma}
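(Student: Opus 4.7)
The plan is to apply the criterion recalled just before the statement: $R$ is a maximal order in $\Frac(R)$ if and only if $\mathcal{O}_l(I)=\mathcal{O}_r(I)=R$ for every nonzero two-sided ideal $I$ of $R$. Since the inclusions $R \subseteq \mathcal{O}_l(I)$ and $R \subseteq \mathcal{O}_r(I)$ are automatic, the content is to prove the reverse inclusions. By symmetry I will only treat the left order.

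Fix a nonzero two-sided ideal $I$ of $R$ and an element $q \in \mathcal{O}_l(I)$. For each $O \in \I$, since $O$ is both a left and a right Ore set of $R$, the two-sided localization $R[O^{-1}]$ exists inside $\Frac(R)$, and the extended ideal
\[
I[O^{-1}] = R[O^{-1}] \cdot I = I \cdot R[O^{-1}]
\]
is a nonzero two-sided ideal of $R[O^{-1}]$ (it is nonzero because $I \subseteq I[O^{-1}]$ and $I \neq 0$). From $qI \subseteq I$ and the fact that $q$ commutes in $\Frac(R)$ with the calculation of the Ore extension, one easily deduces $q \cdot I[O^{-1}] \subseteq I[O^{-1}]$, so $q \in \mathcal{O}_l\bigl(I[O^{-1}]\bigr)$ (where the left order is computed in $\Frac(R)$, which equals $\Frac(R[O^{-1}])$).

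By hypothesis $R[O^{-1}]$ is a maximal order in $\Frac(R)$, so the criterion gives $\mathcal{O}_l\bigl(I[O^{-1}]\bigr) = R[O^{-1}]$, hence $q \in R[O^{-1}]$. Since this holds for every $O \in \I$, the hypothesis $R = \bigcap_{O \in \I} R[O^{-1}]$ yields $q \in R$, as required. The symmetric argument with right orders gives $\mathcal{O}_r(I) = R$, and the lemma follows.

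The only subtle point is the verification that $I[O^{-1}]$ is genuinely a two-sided ideal of $R[O^{-1}]$ and that membership in its left order in $\Frac(R)$ coincides with left-multiplication stability at the level of the original $I$ (i.e.\ that $qI \subseteq I$ really does imply $q \cdot I[O^{-1}] \subseteq I[O^{-1}]$). This is a standard Ore-localization manipulation: any element of $I[O^{-1}]$ can be written as $i\,s^{-1}$ with $i \in I$, $s \in O$, and then $q(is^{-1}) = (qi)s^{-1} \in I[O^{-1}]$. I expect this bookkeeping to be the only mildly nontrivial ingredient; the rest is a direct intersection argument.
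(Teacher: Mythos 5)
Your proof is correct and follows essentially the same route as the paper's: fix a nonzero ideal $I$ and $q\in\mathcal O_l(I)$, pass to the extended two-sided ideal $IR[O^{-1}]=R[O^{-1}]I$, observe that $q$ still stabilizes it, invoke the maximal-order criterion for $R[O^{-1}]$ to get $q\in R[O^{-1}]$, and intersect. The only cosmetic difference is that you spell out the right-fraction bookkeeping $q(is^{-1})=(qi)s^{-1}$, whereas the paper simply cites [McCR;~2.1.16] for the fact that $IR_O=R_OI$ is the two-sided ideal generated by $I$ and then notes $qI\subseteq I\Rightarrow qIR_O\subseteq IR_O$.
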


\begin{proof} Put $Q=\Frac(R)$ and $R_O=R[O^{-1}]$ for each $O \in \I$; notice that these
are noetherian integral domains by [McCR; 2.1.16]. Let $I$ be a non-zero ideal of $R$.
Recall that both $IR_O$ and $R_OI$ are equal to the two-sided ideal of $R_O$ generated by
$I$, see [McCR; 2.1.16]. Now for each $q \in \O_l(I)$, we have $qI
\subseteq I$, so $qIR_O \subseteq IR_O$ and by the criterion mentioned in the preamble to
this lemma we get that
$q \in R_O$ for all $O\in\I$; the same argument holds for $q \in \O_r(I)$. Since $R =
\cap_{O\in\I} R_O$ this shows that $R=\O_r(I)=\O_l(I)$, and hence $R$ is a maximal order
in $Q$.
\end{proof} 

\begin{subcorollary} -- \label{corollaire-OM}
Let $S$ be an affine semigroup. The following statements are equivalent:
\begin{enumerate}[(i)]
	\item $S$ is normal.
	\item $\k^\alpha[S]$ is a maximal order in its division ring of
		fractions for each $\alpha \in C^2_\norm(S,k^*)$.
	\item There exists $\alpha \in C^2_\norm(S,\k^*)$ such that $\k^\alpha[S]$ is a
		maximal order in its division ring of fractions.
\end{enumerate}
\end{subcorollary}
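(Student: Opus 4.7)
The plan is to prove the three implications $(ii) \Rightarrow (iii) \Rightarrow (i) \Rightarrow (ii)$. The implication $(ii) \Rightarrow (iii)$ is immediate on taking $\alpha = \mathbf{1}$, so the bulk of the work concerns the remaining two.

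For $(i) \Rightarrow (ii)$, fix $\alpha \in C^2_\norm(S,\k^*)$ and use Proposition \ref{full-affine-embedding} to identify $R := \k^\alpha[S]$ with $\k_\q[S]$ inside a suitable quantum torus $\k_\q[X_0^{\pm 1},\ldots,X_n^{\pm 1}]$. Since $S$ is normal, Proposition \ref{decomposition} writes $R = \bigcap_{\tau<\RR_+S} \k_\q[S_\tau]$, expresses each factor $\k_\q[S_\tau]$ as a two-sided Ore localization of $R$ (item~3), and identifies it with $\k_{\q_\tau}[\Z^n \oplus \N]$ (item~2), that is, with the localization of a quantum affine space at the multiplicative set generated by $n$ of its canonical generators. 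It is a classical fact from the theory of quantum algebras that quantum affine spaces are maximal orders in their division rings of fractions and that this property is preserved under Ore localization, so each factor $\k_\q[S_\tau]$ is itself a maximal order. Lemma \ref{lemme-OM} then yields that $R$ is a maximal order in $\Frac(R)$.

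For $(iii) \Rightarrow (i)$ I would argue by contradiction. Suppose $\k^\alpha[S]$ is a maximal order but $S$ fails to be normal, and let $\bar{S}$ be the normalization of $S$ inside its group of fractions $G \cong \Z^{n+1}$. Then $\bar{S}$ is a normal, finitely generated (by Gordan's Lemma) affine semigroup strictly containing $S$. Identify $R := \k^\alpha[S]$ with $\k_\q[S]$ inside the quantum torus $\k_\q[\Z^{n+1}] \subseteq \Frac(R)$ and set $T := \k_\q[\bar{S}] \subseteq \k_\q[\Z^{n+1}]$, so that $R \subsetneq T \subseteq \Frac(R)$. To contradict the maximal order property of $R$, it is enough to produce $a \in R \setminus \{0\}$ with $aT \subseteq R$. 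Such an $a$ arises from the classical conductor construction applied to the commutative extension $\k[S] \subseteq \k[\bar{S}]$: since each generator $\bar{s}_i$ of $\bar{S}$ satisfies $p_i \bar{s}_i \in S$ for some $p_i \geq 1$, the element $X^{\bar{s}_i}$ is integral over $\k[S]$, so $\k[\bar{S}]$ is module-finite over $\k[S]$. Consequently the conductor $\mathfrak{c} = \{r \in \k[S] \mid r\,\k[\bar{S}] \subseteq \k[S]\}$ is a nonzero $G$-graded ideal, and hence contains a monomial $X^c$ with $c \in S$ satisfying $c + \bar{s} \in S$ for every $\bar{s} \in \bar{S}$. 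Taking $a = X^c \in R$, we obtain $a \cdot X^{\bar{s}} \in \k^* X^{c+\bar{s}} \subseteq R$ inside $\k_\q[\Z^{n+1}]$, and by linearity $aT \subseteq R$, the desired contradiction.

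The main obstacle is the appeal in the proof of $(i) \Rightarrow (ii)$ to the fact that Ore localizations of quantum affine spaces at subsets of their canonical generators are maximal orders in their division rings of fractions; this is standard, but one has to locate appropriate references in the literature on quantum algebras. The conductor step in $(iii) \Rightarrow (i)$ is purely combinatorial in the sense that the defining condition $c + \bar{S} \subseteq S$ on $c$ involves only the underlying semigroups, so the existence of such a $c$ established for the commutative algebra $\k[S]$ transfers verbatim to any cocycle twist $\k^\alpha[S]$, the twisted product agreeing with the commutative one up to nonzero scalars.
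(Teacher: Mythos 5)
Your proof of $(i)\Rightarrow(ii)$ and the dismissal of $(ii)\Rightarrow(iii)$ follow the paper's argument essentially verbatim: identify $\k^\alpha[S]$ with $\k_\q[S]$, decompose it as $\bigcap_{\tau<\RR_+S}\k_\q[S_\tau]$ via Proposition~\ref{decomposition}, note each $\k_\q[S_\tau]$ is a localized quantum affine space and hence a maximal order (the paper cites [MR; Chapitre V, Corollaire 2.6 and Chapitre IV, Proposition 2.1] where you say ``standard''), then apply Lemma~\ref{lemme-OM}.

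Your $(iii)\Rightarrow(i)$, however, takes a genuinely different route and it is correct. The paper argues pointwise: given $t\in\Z^{n+1}$ with $kt\in S$, it forms the specific overring $T=\sum_{l=0}^{k-1}\k_\q[S]\,(X^t)^l$, exhibits $a=1$, $b=\prod_{l<k}X^{t_l}$ with $aTb\subseteq\k_\q[S]$, and concludes $X^t\in\k_\q[S]$, hence $t\in S$. You instead handle the entire normalization $\bar S$ at once, introducing $T=\k_\q[\bar S]$ and producing a single monomial $X^c$ in the conductor of $\k[S]\subseteq\k[\bar S]$; the purely combinatorial condition $c+\bar S\subseteq S$ then gives $X^cT\subseteq R$ (with $b=1$), so maximality forces $T=R$, contradicting $S\subsetneq\bar S$. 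Both are valid; the paper's argument is more elementary and self-contained (it never needs module-finiteness of $\k[\bar S]$ over $\k[S]$ or the existence of a nonzero graded conductor), while yours imports a standard piece of commutative machinery and obtains the conclusion for all of $\bar S$ in one stroke. One small point worth making explicit in your write-up: the maximal-order criterion asks for $a,b\in R\setminus\{0\}$ with $aTb\subseteq R$, so your choice is $b=1$; this is harmless but should be stated rather than silently dropping $b$.
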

\begin{proof} 
Fix an integer $n\in\N$ such that $S$ identifies with a sub-semigroup of $\Z^{n+1}$
which generates $\Z^{n+1}$ as a group. By Proposition \ref{full-affine-embedding}, given
any $\alpha\in C^2_\norm(S,\k^*)$, the algebra $\k^\alpha[S]$ identifies with $\k_\q[S]
\subseteq \k_\q[X_0^{\pm 1},\dots,X_n^{\pm 1}]$ for some skew-symmetric matrix $\q$. We will
make this identification without further comment.
 
We first show that (i) implies (ii). We are in position to apply Proposition
\ref{decomposition} so $\displaystyle \k_\q[S] = \bigcap_{\tau<\RR_+S} \k_\q[S_\tau]$; now
for each facet $\tau<\RR_+S$, the algebra $\k_\q[S_\tau]$ is isomorphic to a quantum
affine space localized at some of its canonical generators, so it is a maximal order by
[MR; Chapitre V, Corollaire 2.6 and Chapitre IV, Proposition 2.1]. Also $\k_\q[S_\tau]$ is
a localization of $\k_\q[S]$, so it is enough to apply Lemma \ref{lemme-OM}. Obviously
(ii) implies (iii).

We now prove that (iii) implies (i). Consider $t=(t_0,\dots,t_n) \in \Z^ {n+1}$ and
suppose that $kt \in S$ for some positive integer $k$. We denote by $T$ the left
$\k_\q[S]$-submodule of $\Frac(\k_\q[S])$ generated by the set $\{(X^t)^l \mid l\in\N\}$,
where $X^t = X_0^{t_0} \dots X_n^{t_n}$. Since $X^t$ commutes up to non-zero scalars with
all $X^s$ for $s\in S$, we see that $T$ is a subring of $\Frac(\k_\q[S])$ which clearly
contains $\k_\q[S]$. The hypothesis on $t$ implies that $T$ is finitely generated, since
$T = \sum_{l=0}^{k-1} \k_\q[S] (X^t)^l$. For each $0 \leq l < k$ fix $s_l, t_l \in S$ such
that $tl = s_l - t_l$, so $(X^t)^l$ and $X^{s_l}(X^{t_l})^{-1}$ coincide up to a non-zero
scalar. Taking $a=1$ and $b=\prod_{0 \le l < k} X^{t_l}$ we get that $a T b \subseteq
\k_\q[S]$; since $\k_q[S]$ is a maximal order in its division ring of fractions $T
\subseteq \k_\q[S]$, in particular $X^t \in \k_\q[S]$, so $t\in S$.
\end{proof}

\subsection{Twisted lattice algebras.}\label{ss-tla}

In this subsection we consider a class of algebras arising in a natural way from a given
finite distributive lattice together with some additional data. These algebras, which we
call \emph{twisted lattice algebras}, were introduced in [RZ; Section 2] where they
appeared as degenerations of some quantum Schubert and Richardson varieties; their study
was the original motivation for this work. In the course of this subsection we show that
they are twisted affine semigroup algebras, so the results from the previous subsection
apply to them.

For the basic notions concerning ordered sets and lattices, as well as all unexplained
terminology and notation, we refer to [RZ; Section 1] and the references therein. Recall
in particular the following classical result.

\begin{subtheorem} {\bf (Birkhoff)} -- \label{birkhoff}
Let $\Pi$ be a finite distributive lattice. Denote by $\Pi_0=\irr(\Pi)$ the set of its
join-irreducible elements and by $J(\Pi_0)$ the set of $\Pi_0$-ideals, ordered
by inclusion. The map
\[
\begin{array}{ccrcl}
\varphi & : & \Pi & \longrightarrow & J(\Pi_0) \cr
 & & \alpha & \longmapsto & \{\pi\in\Pi_0 \mid \pi\le\alpha\}   
\end{array}
\]
is an isomorphism of lattices, and the rank of $\Pi$ coincides with the cardinality of
$\Pi_0$.
\end{subtheorem}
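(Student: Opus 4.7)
The plan is to prove this in the classical way: check that $\varphi$ is well-defined, order-preserving, and has an order-preserving inverse, and then extract the rank statement from the resulting isomorphism.

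First I would verify that $\varphi(\alpha)$ is indeed a $\Pi_0$-ideal, which is immediate by transitivity of $\leq$, and that $\varphi$ is order-preserving, which is equally immediate. The next step is injectivity, which rests on the fact that in any finite lattice every element is the join of the join-irreducibles below it: using induction on the length of the longest chain in $\Pi$ below $\alpha$, if $\alpha$ itself is join-irreducible we are done, and otherwise $\alpha = \beta \join \gamma$ with $\beta,\gamma < \alpha$, so the induction hypothesis gives $\alpha = \bigvee \varphi(\alpha)$. Once this is established, $\varphi(\alpha)=\varphi(\beta)$ forces $\alpha = \bigvee\varphi(\alpha) = \bigvee\varphi(\beta) = \beta$.

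Surjectivity is the step that genuinely uses distributivity, and is the main technical obstacle. Given a $\Pi_0$-ideal $I$, set $\alpha = \bigvee I$; then $I \subseteq \varphi(\alpha)$ is trivial. For the reverse inclusion, take $\pi \in \varphi(\alpha)$, so $\pi \in \Pi_0$ and $\pi \le \bigvee I$. By distributivity,
\[
\pi = \pi \meet \bigvee_{\rho \in I} \rho = \bigvee_{\rho \in I}(\pi \meet \rho).
\]
Since $\pi$ is join-irreducible and $I$ is finite, we must have $\pi = \pi \meet \rho$ for some $\rho \in I$, i.e. $\pi \le \rho$. As $I$ is a $\Pi_0$-ideal and $\pi \in \Pi_0$, it follows that $\pi \in I$. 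This shows $\varphi$ is surjective, and the inverse $I \mapsto \bigvee I$ is manifestly order-preserving, so $\varphi$ is an isomorphism of ordered sets, hence of lattices.

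For the rank statement, it suffices to compute the length of a maximal chain in $J(\Pi_0)$. Given any maximal chain $\emptyset = I_0 \subsetneq I_1 \subsetneq \cdots \subsetneq I_n = \Pi_0$, I would argue that each step adds exactly one element: if $|I_{k+1} \setminus I_k| \geq 2$, then picking a minimal element $\pi$ of $I_{k+1} \setminus I_k$ in $\Pi_0$, the set $I_k \cup \{\pi\}$ is a $\Pi_0$-ideal strictly between $I_k$ and $I_{k+1}$, contradicting maximality. Hence $n = |\Pi_0|$, which equals the rank of $J(\Pi_0)$, and via $\varphi$ the rank of $\Pi$.
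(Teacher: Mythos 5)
The paper states Birkhoff's theorem as a classical result and gives no proof (it simply appeals to it with a citation implied by ``classical result''), so there is no in-paper argument to compare against. Your proof is the standard one and is correct: well-definedness and monotonicity are immediate, injectivity rests on the fact that every element of a finite lattice is the join of the join-irreducibles below it, surjectivity uses finite distributivity to pull a join-irreducible $\pi \le \bigvee I$ inside $I$, and the rank count follows from the observation that consecutive ideals in a maximal chain of $J(\Pi_0)$ differ by exactly one element. The only points worth tightening are the degenerate cases: in the injectivity induction the base case should explicitly include $\alpha = \hat 0$, where $\varphi(\hat 0)=\emptyset$ and $\hat 0 = \bigvee\emptyset$ (assuming, as usual, that $\hat 0 \notin \Pi_0$); and in the surjectivity step one should note that if $I=\emptyset$ then $\bigvee I = \hat 0$ and $\varphi(\hat 0)=\emptyset$, so the join-irreducibility argument is only invoked for $I\neq\emptyset$. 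These are cosmetic; the argument is sound.
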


For the rest of this sub-section we fix a finite distributive lattice $(\Pi, \leq)$. We
will associate to $\Pi$ a normal affine semigroup.
Let $\FrMon(\Pi)$ be the free commutative monoid over $\Pi$. For $x$ in $\FrMon(\Pi)$
different from the unit element we define the length of $x$, denoted $\ell(x)$, as the
unique element $l$ of $\N^*$ such that $x$ may be written as the product of $l$ elements
of $\Pi$. By convention the unit element has length $0$.

We consider the equivalence relation on $\FrMon(\Pi)$ compatible with the product
generated by the set $\{(\alpha\beta,(\alpha\meet\beta)(\beta\join\alpha)) \mid
\alpha,\beta\in\Pi\} \subseteq \FrMon(\Pi) \times \FrMon(\Pi)$. We denote by $\StMon(\Pi)$
the quotient monoid
\[
\StMon(\Pi) = \FrMon(\Pi)/\sim .
\]
which we call the \emph{straightening semigroup} of $\Pi$.

\begin{sublemma} -- \label{formal-monomial-basis}
Any element of $\StMon(\Pi)$ different from the unit element may be written as a product 
$\pi_1 \dots \pi_s$, with $s\in\N^*$ and $\pi_1 \le \dots \le \pi_s$.
\end{sublemma}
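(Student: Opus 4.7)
The plan is to establish the existence of a standard representative in each class by a greedy ``straightening'' argument. Among all lifts in $\FrMon(\Pi)$ of the given element, I would pick one, say $y = \pi_1 \cdots \pi_s$, that maximizes a suitable numerical statistic, and argue that the factors of $y$ must then be pairwise comparable in $\Pi$. Since $\FrMon(\Pi)$ is commutative, such a multichain can then be reordered as a weakly increasing product, which yields the desired expression in $\StMon(\Pi)$.

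For the statistic, I would take $\Phi(z) = \sum_{i} r(\pi_i)^2$, where $r$ denotes the rank function of the finite distributive lattice $\Pi$. A $\Phi$-maximizer exists because the defining relation of $\StMon(\Pi)$ preserves length and $\FrMon(\Pi)$ has only finitely many monomials of any given length. If $y$ contained an incomparable pair $\alpha, \beta$, then replacing $\alpha \beta$ inside $y$ by $(\alpha \meet \beta)(\alpha \join \beta)$ would yield an equivalent monomial with strictly larger $\Phi$, contradicting maximality. Hence no two factors of $y$ are incomparable; that is, they form a multichain in $\Pi$, and reordering completes the proof.

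The main technical point is the strict increase of $\Phi$ under a non-trivial straightening step, which I would handle in two stages. First, Birkhoff's Theorem \ref{birkhoff} gives $\varphi(\alpha \meet \beta) = \varphi(\alpha) \cap \varphi(\beta)$ and $\varphi(\alpha \join \beta) = \varphi(\alpha) \cup \varphi(\beta)$; taking cardinalities yields the modular identity $r(\alpha) + r(\beta) = r(\alpha \meet \beta) + r(\alpha \join \beta)$. Second, the incomparability of $\alpha$ and $\beta$ forces the strict inequalities $r(\alpha \meet \beta) < \min\{r(\alpha), r(\beta)\}$ and $r(\alpha \join \beta) > \max\{r(\alpha), r(\beta)\}$, so a one-line convexity computation (using $a+d=b+c$ with $a<b\le c<d$ to get $a^2+d^2-b^2-c^2 = 2(d-b)(d-c)>0$) delivers $r(\alpha \meet \beta)^2 + r(\alpha \join \beta)^2 > r(\alpha)^2 + r(\beta)^2$, as required.
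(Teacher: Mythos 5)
Your argument is correct, but it takes a genuinely different route from the paper's. The authors invoke a double induction on the length $s$ and the depth of $\pi_s$ in $\Pi$, referring to [RZ; Lemma 2.5] for the pattern; this is the usual algorithmic ``straightening'' induction familiar from ASL theory. You instead give a global extremal argument: among the (finitely many, since the defining relation preserves length) lifts of a class in $\FrMon(\Pi)$, maximize the potential $\Phi(z)=\sum_i r(\pi_i)^2$, and observe that a single non-trivial straightening step strictly increases $\Phi$ by modularity of the rank function ($r(\alpha)+r(\beta)=r(\alpha\meet\beta)+r(\alpha\join\beta)$, which follows from Birkhoff's theorem by inclusion-exclusion on $\varphi$) together with strict convexity of $x\mapsto x^2$; hence a maximizer has pairwise comparable factors, which can be reordered into a multichain. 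All the ingredients check out: incomparability forces the strict inequalities $r(\alpha\meet\beta)<\min\{r(\alpha),r(\beta)\}$ and $r(\alpha\join\beta)>\max\{r(\alpha),r(\beta)\}$ because $\varphi$ is an order isomorphism onto a ring of sets, and your identity $a^2+d^2-b^2-c^2=2(d-b)(d-c)$ under $a+d=b+c$ is correct. What the paper's induction buys is a concrete termination-by-depth bookkeeping; what your argument buys is a shorter, self-contained proof via a single monovariant, at the modest cost of invoking the graded/modular structure of $\Pi$. Either way one only gets existence of the standard form; uniqueness is established separately in Proposition \ref{base-pour-alg-str}.
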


\begin{proof} The argument is an easy double induction on $s$ and the depth of $\pi_s$ in
$\Pi$, analogous to the proof of [RZ; Lemma 2.5]. \end{proof}

We now want to show that $\StMon(\Pi)$ is actually a normal affine semigroup. For this we
follow [H]. Let $n \in\N$ be the rank of $\Pi$. We consider on $\Pi_0 \subseteq \Pi$ the
induced order, and extend it a total order $\le_\tot$. We denote by $p_1 <_\tot \dots
<_\tot p_n$ the strictly increasing sequence of elements of $\Pi_0$ with respect to
$\le_\tot$, and consider the morphism of monoids
\[
\begin{array}{ccrcl}
 & & \FrMon(\Pi) & \longrightarrow & \Z^{n+1} \cr
	& & \pi &\longmapsto &\displaystyle e_0 + \sum_{\{i \mid p_i\le\pi\} } e_i,
\end{array}
\]
where $\{e_0,\dots,e_n\}$ is the canonical basis of $\Z^{n+1}$. It is clear that for
$\alpha,\beta\in\Pi$ the images of $\alpha\beta$ and
$(\alpha\meet\beta)(\alpha\join\beta)$ by this map coincide. As a consequence we get
an induced morphism \[i: \StMon(\Pi) \to \Z^{n+1}.\]

We consider now the following sub-semigroups of $\Z^{n+1}$. Set 
\[
	T=\{(s_0,\dots,s_n)\in\N^{n+1} \tq s_0 \geq \max\{s_1,\dots,s_n\}\}.
\]
Given $\alpha,\beta \in \Pi_0$ we say that $\beta$ is consecutive to $\alpha$, which we
denote by $\alpha\prec\beta$, if $\alpha < \beta$ and there is no $\gamma\in\Pi_0$ such
that $\alpha < \gamma < \beta$. Now for each pair $(p_i,p_j)$ of elements of $\Pi_0$ such
that $p_j$ is consecutive to $p_i$ we define
\[ 
S_{ij} = \{(s_0,\dots,s_n)\in\N^{n+1} \tq s_i \ge s_j\}. 
\]
Finally, set
\[
S = T \cap \left(\bigcap_{p_i \prec p_j} S_{ij}\right). 
\]
We will prove that the semigroup $\StMon(\Pi)$ is isomorphic to $S$.
\begin{subproposition} -- \label{strPI-is-nasg}
Keep the notation from the previous paragraph.
\begin{enumerate}[(i)]
	\item The map $i$ is injective and its image is equal to $S$.
	\item The semigroup $S$ generates $\Z^{n+1}$ as a group.
	\item The semigroup $\StMon(\Pi)$ is a normal affine semigroup of rank $n$. 
\end{enumerate}
\end{subproposition}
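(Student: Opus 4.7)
The strategy is to pass through Birkhoff's theorem to identify $\Pi$ with the set $J(\Pi_0)$ of $\Pi_0$-ideals, and to use Lemma \ref{formal-monomial-basis} to reduce arbitrary elements of $\StMon(\Pi)$ to non-decreasing products $\pi_1 \leq \cdots \leq \pi_s$. Such a product corresponds to an increasing chain of ideals $\varphi(\pi_1) \subseteq \cdots \subseteq \varphi(\pi_s)$, and I will show this produces a bijection onto $S$.

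First, a direct check shows that $i$ factors through $\sim$, using that $p_k \leq \alpha\meet\beta$ and $p_k \leq \alpha\join\beta$ behave additively with respect to $p_k \leq \alpha$ and $p_k \leq \beta$; call the induced map $\bar i$. A standard chain $\pi_1 \leq \cdots \leq \pi_s$ satisfies $\bar i(\pi_1\cdots\pi_s) = (s, s_1, \ldots, s_n)$ with $s_k = |\{j : p_k \leq \pi_j\}|$, and this vector lies in $S$: $s_0 \geq s_k$ is immediate, and $s_i \geq s_j$ for $p_i \prec p_j$ follows from the fact that $p_i \leq p_j$ implies $p_j \leq \pi_l \Rightarrow p_i \leq \pi_l$. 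The heart of part (i) is proving $\bar i$ is a bijection onto $S$. For surjectivity, given $(s_0, \ldots, s_n) \in S$, define
\[
	A_j = \{p_k \in \Pi_0 \tq s_k \geq s_0 - j + 1\}, \qquad j = 1, \ldots, s_0;
\]
each $A_j$ is a $\Pi_0$-ideal (if $p_i \leq p_k \in A_j$, chain coverings $p_i = q_0 \prec \cdots \prec q_m = p_k$ in $\Pi_0$ and iterate the defining inequalities of $S$ to deduce $s_i \geq s_k$), and the nested sequence $A_1 \subseteq \cdots \subseteq A_{s_0}$ lifts through $\varphi^{-1}$ to a standard chain with the prescribed image. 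For injectivity, nestedness forces $\{j : p_k \in A_j\} = \{s-s_k+1, \ldots, s\}$, so the chain is recoverable from its image; since every class in $\StMon(\Pi)$ admits a standard representative, $\bar i$ is injective.

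For (ii), the bottom element $0_\Pi$ contributes $e_0$ and the principal ideals yield vectors $v_k = e_0 + \sum_{p_j \leq p_k} e_j$ in the image. Since $\leq_\tot$ refines $\leq$, the sum involves only indices $j \leq k$, so an induction on $k$ extracts each basis vector $e_k$, proving that the image of $i$ generates $\Z^{n+1}$. Part (iii) then follows: $\StMon(\Pi)$ is finitely generated, abelian, and cancellative (it embeds in $\Z^{n+1}$), hence affine; its group of fractions is identified with $\Z^{n+1}$ by (i)--(ii), which gives the rank; and if $kg \in S$ for $g \in \Z^{n+1}$ and $k \in \N^*$, dividing each defining inequality of $S$ by $k$ shows $g \in S$, yielding normality. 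The main obstacle throughout is verifying in (i) that $A_j$ is a $\Pi_0$-ideal, which is precisely where the constraints defining $S$ must only involve covering pairs $p_i \prec p_j$: these are iterated along chains to promote $\prec$-inequalities to the inequality $s_i \geq s_k$ for arbitrary $p_i \leq p_k$.
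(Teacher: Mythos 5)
Your proof is correct and follows essentially the same approach as the paper: both establish the bijection $\StMon(\Pi) \cong S$ via the nested chain of $\Pi_0$-ideals obtained by thresholding the coordinates (your $A_j = \{p_k : s_k \ge s_0 - j + 1\}$ is the paper's $\supp(s^{(s_0-j)})$ in reverse order), and both use Lemma \ref{formal-monomial-basis} to reduce to non-decreasing representatives. Minor cosmetic differences: for (ii) the paper obtains $e_i$ directly as $i(\varphi^{-1}(\{\pi\le p_i\})) - i(\varphi^{-1}(\{\pi < p_i\}))$ instead of your induction on $k$, and for (iii) you supply the normality argument (dividing the defining inequalities of $S$ by $k$) that the paper leaves implicit. (Note that both arguments actually establish rank $n+1$, not $n$ as the statement reads, since the group of fractions is identified with $\Z^{n+1}$ — this appears to be a typo in the paper rather than an issue with your proof.)
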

\begin{proof} By definition $S$ is a sub-semigroup of $\Z^{n+1}$, and since $i(\pi) \in S$
	for all $\pi\in\Pi$ we get that $i(\StMon(\Pi)) \subseteq S$.

To prove (i) we find an inverse to $i$. Let $s=(s_0,s_1,\dots,s_n)\in S$ and define the
support of $s$ by $\supp(s)=\{p_i, \, 1 \le i \le n \tq s_i \neq 0\}$. We easily see that
$\supp(s) \in J(\Pi_0)$, so whenever $s$ is a non-zero element of $S$ we may consider the
element $s'$ obtained from $s$ by subtracting $1$ from any nonzero entry of $s$, that is
$s'=s-i(\varphi^{-1}(\supp(s)))$. By definition $s'$ belongs to $S$, and its first entry
equals the first entry of $s$ minus $1$.

Now, let $s=(s_0,s_1,\dots,s_n)\in S$ be a non-zero element. Applying the construction of
the last paragraph inductively $s_0$ times we produce a sequence $s = s^{(0)}, s^{(1)},
\ldots, s^{(s_0)} = 0$ of elements of $S$ and a corresponding sequence
$\supp(s)=\supp(s^{(0)}) \ge \supp(s^{(1)})\ge\dots\ge \supp(s^{(s_0)})=\emptyset $ of
elements of $\Pi$. Thus
\[
s = i\circ\varphi^{-1}(\supp(s^{(0)})) + \dots + i\circ\varphi^{-1}(\supp(s^{(s_0-1)})).  
\]

We then define a map $\psi:S \longrightarrow\StMon(\Pi)$ as follows: we set $\psi(0) = 1$,
and for all $s\in S\setminus\{0\}$ set $\psi(s)=\varphi^{-1}(\supp(s^{(0)})) \dots
\varphi^{-1}\supp(s^{(s_0-1)})$; by the previous discussion $i \circ \psi = \id$. We now
prove that $\psi \circ i = \id$. In view of lemma \ref{formal-monomial-basis}, it is
enough to show that $\psi\circ i(\pi_1 \pi_2 \dots, \pi_s)=\pi_1 \pi_2\dots \pi_s$ for all
$s\in\N^*$ and $\pi_1, \dots, \pi_s\in\Pi$ such that $\pi_1 \le \dots \le \pi_s \in \Pi$.
An elementary proof shows that in this context, $i(\pi_1 \dots \pi_s)$ is a nonzero
element of $S$ to which the process described before associates the decreasing sequence
$\pi_s \ge \dots \ge \pi_1$ in $\Pi$, so $\psi\circ i(s)=s$.

We now show that $S$ generates $\Z^{n+1}$ as a group. It is clear that $e_0\in S$ and
that, given $1 \le i\le n$, the sets $I=\{\pi\in\Pi_0 \tq \pi\le p_i\}$ and
$J=\{\pi\in\Pi_0 \tq \pi < p_i\}$ are poset ideals of $\Pi_0$. Put $\pi_I=\varphi^{-1}(I)$
and $\pi_J=\varphi^{-1}(J)$; as we have seen $i(\pi_I),i(\pi_J)\in S$ and
$e_i=\varphi(\pi_J)-\varphi(\pi_I)$, which proves item (ii). Item (iii) follows
immediately from the previous items and the definition of $S$. \end{proof}

By an argument similar to the one used in the proof of item (i) of Proposition
\ref{strPI-is-nasg} we get the following statement, which strengthens the existence
statement of Lemma \ref{formal-monomial-basis}.

\begin{subproposition} -- \label{base-pour-alg-str} Keep the notation from the previous
	paragraph. To any element $a$ of $\StMon(\Pi)$ different from the unit element we
	may associate in a unique way an integer $t \in\N^*$ and an increasing sequence
	$\pi_1 \le \dots \le \pi_t \in\Pi$ such that $a=\pi_1\dots\pi_t$.
\end{subproposition}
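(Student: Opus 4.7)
The existence statement is precisely Lemma \ref{formal-monomial-basis}, so I only need to establish uniqueness. My plan is to read off the increasing sequence from the image of $a$ under the injection $i : \StMon(\Pi) \hookrightarrow S \subseteq \Z^{n+1}$ of Proposition \ref{strPI-is-nasg}(i), via the reconstruction procedure $\psi$ used in its proof. Concretely, suppose $a = \pi_1 \cdots \pi_t = \pi'_1 \cdots \pi'_{t'}$ with $\pi_1 \le \dots \le \pi_t$ and $\pi'_1 \le \dots \le \pi'_{t'}$. Applying $i$ and comparing the zeroth coordinate gives $t = t'$ at once; the content is to show $\pi_j = \pi'_j$ for all $j$.

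The key step is the identity $\supp(i(\pi_1 \cdots \pi_t)) = \varphi(\pi_t)$ for any increasing sequence. Indeed, $p_i \in \supp(i(a))$ iff there exists $j$ with $p_i \le \pi_j$, and since $\pi_j \le \pi_t$ this is equivalent to $p_i \le \pi_t$, i.e. $p_i \in \varphi(\pi_t)$. Consequently, the first pass of the algorithm $\psi$ (described in the proof of Proposition \ref{strPI-is-nasg}(i)) produces $\varphi^{-1}(\supp(i(a))) = \pi_t$. Subtracting $i(\pi_t)$ from $s = i(a)$ yields $s^{(1)} = i(\pi_1 \cdots \pi_{t-1})$, whose zeroth coordinate equals $t-1$ and to which the same observation applies; iterating, the algorithm outputs the reversed sequence $\pi_t, \pi_{t-1}, \dots, \pi_1$.

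Running $\psi$ on $i(a)$ is a deterministic procedure whose output depends only on $a$, not on the choice of decomposition, so it must also output $\pi'_{t},\pi'_{t-1}, \dots, \pi'_1$. Comparing the two outputs term by term gives $\pi_j = \pi'_j$ for all $j$. The only mild obstacle is the support identity $\supp(i(\pi_1\cdots\pi_t))=\varphi(\pi_t)$ used above; it is transparent from the definition of $i$ together with Birkhoff's isomorphism $\varphi$, which turns the increasing chain $\pi_1 \le \dots \le \pi_t$ into the ascending chain of $\Pi_0$-ideals $\varphi(\pi_1) \subseteq \dots \subseteq \varphi(\pi_t)$ whose union is $\varphi(\pi_t)$. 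Equivalently, the whole argument can be summarized as an appeal to the already-established equality $\psi \circ i = \id$, noting that the construction of $\psi$ implicitly extracts a canonical increasing decomposition.
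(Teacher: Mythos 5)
Your proof is correct and follows exactly the route the paper intends: the paper dismisses the proof with ``by an argument similar to the one used in the proof of item (i) of Proposition \ref{strPI-is-nasg},'' and your argument is precisely that argument, spelled out. In particular you correctly identify that the deterministic reconstruction map $\psi$ built in that proof, together with the support identity $\supp(i(\pi_1\cdots\pi_t))=\varphi(\pi_t)$ for an increasing chain, reads off the canonical decomposition from $i(a)$, and uniqueness follows since $\psi$ depends only on $i(a)$, not on the chosen factorization.
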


\begin{subdefinition} -- 
Let $\Pi$ be a finite distributive lattice and $\alpha$ a normalized $2$-cocycle on
$\StMon(\Pi)$. We call the $\k$-algebra $\k^\alpha[\StMon(\Pi)]$  the twisted lattice
algebra associated with $\Pi$ and $\alpha$.
\end{subdefinition}

\begin{subexample} -- \label{qta-sont-qla} \rm
Let $\Pi$ be a finite distributive lattice and consider maps $\q : \Pi \times \Pi
\longrightarrow \k^*$ and $\c : \inc(\Pi \times \Pi) \longrightarrow \k^*$, where
$\inc(\Pi\times\Pi)$ is the set of pairs $(\alpha,\beta)\in\Pi\times\Pi$ such that
$\alpha$ and $\beta$ are incomparable elements of $\Pi$. To the data consisting of
$\Pi,\q$ and $\c$ we associate the {\em quantum toric algebra} $\A_{\Pi,\q,\c}$ defined
in [RZ; Section 2]. Assume that standard monomials on $\Pi$ form a $\k$-linear basis
of $\A_{\Pi,\q,\c}$ as in [RZ; Remark 2.1], and let $\psi: \A_{\Pi,\q,\c} \longrightarrow
\k[\StMon(\Pi)]$ be the $\k$-linear morphism that sends $1$ to $1$ and a standard
monomial $X_{\pi_1} \dots X_{\pi_t} \in \A_{\Pi,\q,\c}$ to the element $\pi_1 \dots \pi_t
\in \k[\StMon(\Pi)]$ for any $t\in\N^*$ and any increasing sequence $\pi_1 \le\dots\le
\pi_t \in\Pi$. The map $\psi$ is an isomorphism of $\k$-vector spaces by Proposition
\ref{base-pour-alg-str}, and so the product of $\A_{\Pi,\q,\c}$ induces a product on
$\k[\StMon(\Pi)]$, which is compatible with the $\Z^{n+1}$-grading induced by
the morphism $i: \StMon(\Pi) \to \Z^{n+1}$. Hence there exists a unique normalized
$2$-cocycle $\alpha$ on $\FrMon(\Pi)$ such that $\k[\StMon(\Pi)]$ endowed with this new
associative algebra structure equals $\k^\alpha[\StMon(\Pi)]$.
\end{subexample}

We are now in position to complete the proof of the result announced in [RZ; Remark 5.2.8]. 
For this, we use conventions and notation from [RZ].

\begin{subproposition} -- 
Suppose $A$ is a symmetric quantum graded algebra with a straightening law on the finite
partially ordered set $\Pi$, see [RZ; Definition 3.1], and suppose that $A$ satisfies
condition (C), see [RZ; Definition 4.1]. Then $A$ is a normal integral domain in the sense
of [RZ; Remark 5.2.8]. In particular, quantum Richardson varieties as defined in [RZ;
Definition 5.2.1] are normal domains.
\end{subproposition}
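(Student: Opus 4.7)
The plan is to apply the degeneration-and-lifting strategy that motivated the paper in the first place. Roughly, condition~(C) is exactly the hypothesis from [RZ] guaranteeing that $A$ carries a filtration whose associated graded ring is isomorphic to a quantum toric algebra $\A_{\Pi,\q,\c}$ of the kind considered in Example~\ref{qta-sont-qla}. My first step would therefore be to invoke the relevant result from [RZ] to produce an $\N$-filtration on $A$ such that $\gr A \cong \A_{\Pi,\q,\c}$, and then use Example~\ref{qta-sont-qla} to identify $\gr A$ with a twisted lattice algebra $\k^\alpha[\StMon(\Pi)]$ for a suitable normalized $2$-cocycle $\alpha$.

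Next I would verify that this associated graded algebra is a normal domain in the noncommutative sense (maximal order in its skew-field of fractions). By Proposition~\ref{strPI-is-nasg}, the semigroup $\StMon(\Pi)$ is a normal affine semigroup, so $\k^\alpha[\StMon(\Pi)]$ is a twisted affine semigroup algebra built on a normal semigroup. Corollary~\ref{corollaire-OM} then gives directly that $\gr A \cong \k^\alpha[\StMon(\Pi)]$ is a maximal order in its division ring of fractions, and Lemma~\ref{integre-et-noetherien} ensures it is a noetherian integral domain.

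The third step is to lift these properties from $\gr A$ to $A$ itself. Being a noetherian integral domain lifts from $\gr A$ to $A$ by standard filtered-graded arguments (if $\gr A$ is noetherian then so is $A$, and a nonzero-product of nonzero elements in $A$ has leading symbol equal to the nonzero product of their leading symbols in $\gr A$, whence $A$ has no zero divisors). The maximal order property lifts by the classical result of Maury–Raynaud [MR; Chap.~IV] which states that if $A$ is a filtered algebra with $\gr A$ a noetherian maximal order, then $A$ is itself a maximal order in its skew-field of fractions; this is the standard mechanism for transferring normality across deformations. I expect the main obstacle to be ensuring that the filtration from~[RZ] satisfies the hypotheses of the lifting theorem (typically: separated, exhaustive, and $\gr A$ a domain), but under condition~(C) these are essentially built into the setup.

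Finally, for the application to quantum Richardson varieties, it suffices to recall from [RZ; \S 5.2] that such varieties are defined as symmetric quantum graded ASLs on a certain sublattice of the Bruhat order and are shown in [RZ] to satisfy condition~(C). The normality and integrality then follow directly from the preceding paragraphs applied to this specific family.
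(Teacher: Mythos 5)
Your proposal is correct and follows essentially the same route as the paper: the paper simply compresses your steps 1 and 3 (the filtration of $A$ with $\gr A \cong \A_{\Pi,\q,\c}$ and the subsequent lifting of the normal-domain property) into a single citation of [RZ; Remark 5.2.8], and then, exactly as you do, establishes that $\A_{\Pi,\q,\c}$ is a normal domain by combining Example~\ref{qta-sont-qla}, Proposition~\ref{strPI-is-nasg} and Corollary~\ref{corollaire-OM}. You are filling in the content of that remark explicitly (including the Maury--Raynaud-type lifting of the maximal-order property), rather than doing anything structurally different.
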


\begin{proof} By [RZ; Remark 5.2.8], it suffices to show that, if $\Pi$ is a finite
	distributive
lattice and $\q : \Pi \times \Pi \longrightarrow \k^*$ and $\c : \inc(\Pi \times \Pi)
\longrightarrow \k^*$ are maps such that standard monomials on $\Pi$ form a $\k$-linear
basis of $\A_{\Pi,\q,\c}$, then $\A_{\Pi,\q,\c}$ is a normal domain. As stated in Example
\ref{qta-sont-qla}, the algebra $\A_{\Pi,\q,\c}$ is isomorphic to a $2$-cocycle twist of
$\k[\StMon(\Pi)]$. On the other hand Proposition \ref{strPI-is-nasg} shows that
$\StMon(\Pi)$ is a normal affine semigroup, so applying Corollary \ref{corollaire-OM} we
prove the general statement. Since quantum Richardson varieties fall under this context as
stated in [RZ; Remark 5.2.8], the claim follows. \end{proof}

\section*{References.}
\noindent 
[AZ] 
M. Artin, J.J. Zhang. 
Noncommutative projective schemes. Adv. Math. 109 (1994), no. 2, 228--287.

\noindent
[Ber]
Berger, Roland(F-SETN-LAM)
Dimension de Hochschild des alg\`ebres gradu\'ees. (French. English, French summary) 
C. R. Math. Acad. Sci. Paris 341 (2005), no. 10, 597–600. 

\noindent
[BS]
M.P. Brodmann, R.Y. Sharp. 
Local cohomology: an algebraic introduction with geometric applications. 
Cambridge Studies in Advanced Mathematics, 60. Cambridge University Press, Cambridge.

\noindent
[BZ]
Dualising complexes and twisted Hochschild (co)homology for Noetherian Hopf algebras.
J. Algebra 320 (2008), no. 5, 1814–1850.

\noindent
[BH]
W. Bruns, J. Herzog. Cohen-Macaulay rings.
Cambridge Studies in Advanced Mathematics, 39,
Cambridge University Press,
Cambridge, 1993.

\noindent
[Cal] P. Caldero.
Toric degenerations of Schubert varieties. 
Transform. Groups 7 (2002), no. 1, 51--60. 

\noindent
[CE] H. Cartan, S. Eilenberg. 
Homological algebra. 
Princeton University Press, Princeton, N. J., 1956.

\noindent
[F] W. Fulton.
Introduction to toric varieties. Annals of Mathematics Studies,
131, The William H. Roever Lectures in Geometry,
Princeton University Press, Princeton, NJ, 1993.

\noindent
[Hart] Residues and duality.
R. Hartshorne. Residues and duality. Lecture Notes in Mathematics, No. 20 Springer-Verlag, Berlin-New York 1966.

\noindent
[H] T. Hibi. 
Distributive lattices, affine semigroup rings and algebras with straightening laws.  
Commutative algebra and combinatorics (Kyoto, 1985),  93--109, 
Adv. Stud. Pure Math., 11, North-Holland, Amsterdam, 1987.

\noindent
[Ing] C. Ingalls. : Quantum toric varieties, preprint available at 

\noindent
\emph{http://kappa.math.unb.ca/\~{}colin/research/nctoric.pdf}

\noindent
[Isch]
Eine Dualität zwischen den Funktoren Ext und Tor. (German)
J. Algebra 11 1969 510–531. 

\noindent
[Jor] P. J\o rgensen. 
Local cohomology for non-commutative graded algebras. 
Comm. Algebra 25 (1997), no. 2, 575--591. 

\noindent
[JZ] P. J\o rgensen, J. J. Zhang.
Gourmet's Guide to Gorensteinness
Advances in Mathematics, 01/2000; no. 151(2):313 -- 345. 

\noindent
[LR] Lakshmibai, V.(1-NORE); Reshetikhin, N.(RS-AOS2)
Quantum flag and Schubert schemes. (English summary) Deformation theory and quantum groups
with applications to mathematical physics (Amherst, MA, 1990), 145–181,
Contemp. Math., 134, Amer. Math. Soc., Providence, RI, 1992. 

\noindent
[LR1] T.H. Lenagan, L. Rigal. 
Quantum graded algebras with a straightening law and the AS-Cohen-Macaulay property for quantum 
determinantal rings and quantum Grassmannians. 
J. Algebra 301 (2006), no. 2, 670 -- 702.

\noindent
[Lev] T. Levasseur.
Some properties of noncommutative regular graded rings. 
Glasgow Math. J. 34 (1992), no. 3, 277--300.

\noindent
[MR] G. Maury, J. Raynaud. 
Ordres maximaux au sens de K. Asano. 
Lecture Notes in Mathematics, 808. 
Springer, Berlin, 1980.

\noindent
[McCR] 
J.C. McConnell, J.C. Robson. 
Noncommutative Noetherian rings. 
Graduate Studies in Mathematics, 30. 
American Mathematical Society, Providence, RI, 2001.

\noindent 
[NV] C. Nastasescu, F. van Oystaeyen.
Graded ring theory. 
North-Holland Mathematical Library, 28. North-Holland Publishing Co., Amsterdam-New York, 1982. 

\noindent
[RZ] L. Rigal, P. Zadunaisky.
Quantum analogues of Richardson varieties in the Grassmannian and their toric degeneration.
J. Algebra 372 (2012), 293--317. 

\noindent
[S] Soibelʹman, Ya. S.(RS-ROST)
On the quantum flag manifold. (Russian) Funktsional. Anal. i Prilozhen. 26 (1992), no. 3,
90--92; translation in Funct. Anal. Appl. 26 (1992), no. 3, 225–227.

\noindent
[VdB] M. van den Bergh. Existence theorems for dualizing complexes over non-commutative graded and filtered rings.
J. Algebra 195 (1997), no. 2, 662--679. 

\noindent [W] C.A. Weibel. 
An Introduction to Homological Algebra.
Cambridge Studies in Advanced Mathematics, 38. Cambridge University Press, Cambridge, 1994.

\noindent[Y] 
A. Yekutieli. 
Dualizing complexes over noncommutative graded algebras. 
J. Algebra 153 (1992), no. 1, 41--84.

\noindent
[Zad] P. Zadunaisky. Homological regularity properties of quantum flag varieties and
related algebras. PhD Thesis, university of Buenos-Aires and univ. Paris-Nord, 2014.
Available online at
\emph{http://cms.dm.uba.ar/academico/carreras/doctorado/TesisZadunaisky.pdf} 

\noindent
[Zak] Zaks, 
Injective dimension of semiprimary rings, J. Algebra 13 (1969), 73-89

\noindent [Z] J. Zhang.
Twisted graded algebras an equivalences of categories.
Proc. London Math. Soc. (3) 72 (1996), no. 2, 281--311. 

\vskip .5cm

\noindent
Laurent RIGAL, \\
Universit\'e Paris 13, LAGA, UMR CNRS 7539, 99 avenue J.-B. Cl\'ement, 93430 Villetaneuse, 
France; e-mail: rigal@math.univ-paris13.fr\\

\noindent
Pablo ZADUNAISKY, \\
Universidad de Buenos Aires, FCEN, Departamento de Matem\'aticas, Ciudad Universitaria - Pabell\'on I - 
(C1428EGA) - Buenos Aires, Argentina; e-mail: pzadub@dm.uba.ar
\end{document}